\newcommand{\BOX}{\ensuremath\Box}
\newtheorem{theorem}{Theorem}
\newtheorem*{theorem*}{Theorem}
\newtheorem{pro}{Proposition}
\newtheorem{lemma}{Lemma}
\newtheorem{cor}{Corollary}
\theoremstyle{remark}
\newtheorem{remark}{Remark}
\theoremstyle{definition}
\newtheorem{definition}{Definition}
\newtheorem*{weakstrong}{  Theorem 1.3  [3] }
\newcommand{\N}{\mathbb{N}}
\newcommand{\R}{\mathbb{R}}
\newcommand{\Z}{\mathbb{Z}}
\definecolor{darkgreen}{rgb}{0,0.5,0}
\definecolor{darkblue}{rgb}{0,0,0.7}
\definecolor{darkred}{rgb}{0.9,0.1,0.1}
\definecolor{lightblue}{rgb}{0,0.51,1}
\DeclarePairedDelimiter\ceil{\lceil}{\rceil}
\DeclarePairedDelimiter\floor{\lfloor}{\rfloor}
\begin{document}

\title{Local Hadamard well-posedness results for the Navier-Stokes equations}

\author[T. Barker]{Tobias Barker\\\\\textbf{\,\,\,\,\,\,\,\,\,\,\,\,\,\,\,\,\,\,\,\,\,\,\,\,\,\,\,\,\,\,\,\,\,\,\,\,\,\,\,\,\,\,\,\,\,\,\,In memory of my stepfather Brian Ruddle (1950-2019)}}
\address[T. Barker]{DMA, \'{E}cole Normale Sup\'erieure, CNRS, PSL Research University, 75\, 005 Paris}
\email{tobiasbarker5@gmail.com}

\keywords{}
\subjclass[2010]{}
\date{\today}

\maketitle

\noindent {\bf Abstract} 
In this paper we consider classes of initial data that ensure local-in-time Hadamard well-posedness of the associated weak Leray-Hopf solutions of the three-dimensional Navier-Stokes equations.  In particular, for any solenodial $L_{2}$ initial data $u_{0}$ belonging to certain subsets of $VMO^{-1}(\mathbb{R}^3)$, we show that weak Leray-Hopf solutions depend continuously with respect to small divergence-free $L_{2}$ perturbations of the initial data $u_{0}$ (on some finite-time interval). Our main result is inspired and improves upon previous work of the author \cite{barker2018} and work of Jean-Yves Chemin \cite{chemin}. Our method builds upon \cite{barker2018} and \cite{chemin}. In particular our method hinges on decomposition results for the initial data inspired by Calder\'{o}n \cite{Calderon90} together with use of persistence of regularity results. The persistence of regularity statement presented may be of independent interest, since it does not rely upon the solution or the initial data being in the perturbative regime.

\vspace{0.3cm}

\noindent {\bf Keywords}\, 
Navier-Stokes equations, Hadamard well-posedness, Fourier analysis, Littlewood-Paley theory, real interpolation, Besov spaces, persistence of regularity
\vspace{0.3cm}

\noindent {\bf Mathematics Subject Classification (2010)}\, 
	35Q30, 76D05, 35D35, 35D30, 35A99, 35B35, 42B37

\section{Introduction}
At the beginning of the 20th century, Jacques Hadamard introduced a notion of well-posedness of partial differential equations. In particular, a evolutionary partial differential equation is said to be \textit{Hadamard well-posed} if
\begin{enumerate}
\item \textbf{(Existence)} A solution exists for all time.
\item \textbf{(Uniqueness)} The solution is unique for all time.
\item \textbf{(Continuous Dependence)} The solution depends continuously on the initial data.
\end{enumerate} 
The issue of Hadamard well-posedness depends not only on the equation under consideration, but also on the notion of `solution' and the classes  considered for the initial data.

For the Navier-Stokes equations, a popular notion of solution (with certain physical relevance) is that of \textit{weak Leray-Hopf solutions}. In particular, for any $L_{2}(\mathbb{R}^3)$ divergence-free initial data we say that $u:\mathbb{R}^3\times \rightarrow \mathbb{R}^3$ is a weak Leray-Hopf solution associated to $u_{0}$ if
\begin{itemize}
\item $u\in C_{w}([0,\infty); J(\mathbb{R}^3)\cap L^{2}(0,\infty; \dot{H}^{1}(\mathbb{R}^3))$\footnote{  Throughout this paper $J(\mathbb{R}^3):=\{u_{0}\in L_{2}(\mathbb{R}^3)$: $\textrm{div}\,u_{0}=0\}.$ $C_{w}([0,\infty); J(\mathbb{R}^3))$ denotes continuity in time with respect to the weak $L_{2}$ topology.}.
\item  $u$ solves the Navier-stokes equations in the distributional sense:
$$ \partial_{t}u-\Delta u+ u\cdot\nabla u+\nabla p=0\,\,\,\,\textrm{in}\,\,\,\,\mathbb{R}^3\times (0,\infty),\,\,\,\,\textrm{div}\,u=0,\,\,\,\,u(\cdot,0)=u_{0}.$$
\item $u$ satisfies the energy inequality for all $t\geq 0$:
$$\|u(\cdot,t)\|_{L_{2}(\mathbb{R}^3)}^2+2\int\limits_{0}^{t}\int\limits_{\mathbb{R}^3} |\nabla u(y,s)|^2 dyds\leq \|u_{0}\|_{L_{2}(\mathbb{R}^3)}^2.  $$
\end{itemize}
For any $u_{0}\in J(\mathbb{R}^3)$, global-in-time existence of an associated weak Leray-Hopf solution of the Navier-Stokes equations was established by Leray in \cite{Le} in 1934. Up to the present date, whether or not weak Leray-Hopf solutions are unique remains an outstanding open problem in mathematical fluid mechanics. Recently, sufficient conditions for nonuniqueness were provided in \cite{jiasverak2015} and numerical evidence that these sufficient conditions hold was provided in \cite{guillodsverak}.

Let us now give a definition that expresses the continuous dependence requirement for Hadamard well-posedness in the context of weak Leray-Hopf solutions.
\begin{definition}\label{continuousdependence}
Let $u_{0}\in L_{2}(\mathbb{R}^3)$ be weakly divergence-free. We say that weak Leray-Hopf solutions are `\textbf{\textit{locally continuously dependent with respect to $u_{0}$}}' if the following holds true.\\
There exists a finite positive $T$, $\varepsilon>0$ and a continuous function $\Psi$ with $\Psi(0)=0$ such that if
\begin{itemize}
\item $v_{0}\in B_{L_{2}}(u_{0},\varepsilon):=\{w_{0}\in J(\mathbb{R}^3): \|w_{0}-u_{0}\|_{L_{2}}<\varepsilon\}$
\item $v(\cdot,v_{0})$ and $u(\cdot,u_{0})$ are global-in-time weak Leray-Hopf solutions associated to $u_{0}$ and $v_{0}$
\end{itemize}
then for all $t\in (0,T]$ one has the estimate
\begin{equation}\label{ctsdependentest}
\|v(\cdot,t)-u(\cdot,t)\|_{L_{2}(\mathbb{R}^3)}^2+ 2\int\limits_{0}^{t}\int\limits_{\mathbb{R}^3} |\nabla (v-u)|^2 dyds\leq \Psi(\|v_{0}-u_{0}\|_{L_{2}(\mathbb{R}^3)}).
\end{equation}
\end{definition}
Whilst the above definition expresses the notion of Hadamard's continuous dependence condition in the context of weak Leray-Hopf solutions, it also has  ramifications for the regularity of solutions with initial data close to those which generate smooth solutions. In particular, suppose that
\begin{itemize}
\item[] a) Weak Leray-Hopf solutions are locally continuously dependent with respect to $u_{0}$.
\item[] b) The  weak Leray-Hopf solution $u(\cdot,u_{0})$ (unique on $(0,T)$) belongs to $C^{\infty}(\mathbb{R}^3\times (0,T])$.
\end{itemize}
Then a)-b) imply that for any compact set $K$ contained in $\mathbb{R}^3\times (0,T)$, there exists $\varepsilon(K,\Psi)$ such that
$$\|v_{0}-u_{0}\|_{L_{2}(\mathbb{R}^3)}<\varepsilon(K,\Psi)\Rightarrow \,\,\,\,\textrm{any}\,\,\,\,\textrm{suitable}\footnote{We say that a weak Leray-Hopf solution $(v,q)$ is suitable on $\mathbb{R}^3\times (0,T)$ if $$2\int\limits_{0}^{T}\int\limits_{\mathbb{R}^3} |\nabla v|^2\varphi dxdt\leq 
\int\limits_{0}^{T}\int\limits_{\mathbb{R}^3}[ |v|^2(\partial_{t}\varphi+\Delta\varphi)+(|v|^2+2q)v\cdot\varphi]dxdt  $$ for every non-negative $\varphi\in C_{0}^{\infty}(\mathbb{R}^3\times (0,T))$}\,\textrm{weak}\,\textrm{Leray-Hopf}\,\textrm{solution}\, v(\cdot, v_{0})\in L^{\infty}_{x,t}(K). $$
Such an statements follow immediately from a contradiction argument and the `\textit{persistence of singularities'} in \cite{rusinsver}.\\
In this paper, we are concerned with the following natural question:
\begin{itemize}
\item[] \textbf{(Q) Which $\mathcal{Z}\subset\mathcal{S}^{'}(\mathbb{R}^3)$ are such that $u_{0}\in J(\mathbb{R}^3)\cap\mathcal{Z}$ implies that weak Leray-Hopf solutions are locally continuously dependent with respect to $u_{0}$?}
\end{itemize}
From Definition \ref{continuousdependence}, we see that positive answers to \textbf{(Q)} provide classes of initial data for which weak Leray-Hopf solutions are Hadamard well-posed locally in time.

In \cite{barker2018}, the author provided the current widest\footnote{For the subclass of weak Leray-Hopf solutions called `\textit{local Leray solutions}', Lemarie Rieusset built upon ideas in \cite{barker2018} to show in \cite{LR19} that short-time uniqueness holds for a wider class of initial data than those considered in \cite{barker2018}.}class of initial data for which the associated weak Leray-Hopf solutions are unique on some time interval. In particular the following Theorem was proven in \cite{barker2018}.
\begin{weakstrong}\label{weakstrong1}
Suppose that there exists $q>3$ and $s\in(-1+\frac{2}{q},0)$ such that
\begin{equation}\label{initialdataassumptionmaintheoweakstrong}
u_{0}\in J(\mathbb{R}^3)\cap VMO^{-1}(\mathbb{R}^3)\cap\dot{{B}}^{s}_{q,q}(\mathbb{R}^3).
\end{equation} 
 Then, there exists a $\hat{T}(u_0)>0$ such that all weak Leray-Hopf solutions on $Q_{\infty}$, with initial data $u_0$, coincide  on $Q_{\hat{T}(u_0)}:=\mathbb{R}^3\times (0,\hat{T}(u_0)).$
\end{weakstrong}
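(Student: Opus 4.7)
My plan is to produce, on some short time interval $[0, \hat T(u_0))$, a mild solution $\bar u$ associated to $u_0$ that is simultaneously (i) a bona fide weak Leray-Hopf solution and (ii) a member of a Serrin-type class to which weak-strong uniqueness applies. Once such $\bar u$ is in hand, the classical energy/Gronwall argument of Ladyzhenskaya-Prodi-Serrin forces every weak Leray-Hopf solution with the same initial data to coincide with $\bar u$ on $[0, \hat T(u_0))$.

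To construct $\bar u$, I would proceed in two stages. First, since $u_0 \in VMO^{-1}(\R^3)$, I invoke the Koch-Tataru theorem to produce a mild solution in the critical space adapted to $BMO^{-1}$ on some interval $[0, T_1)$; here the key point (and what distinguishes $VMO^{-1}$ from $BMO^{-1}$) is that the frequency-localized pieces of $u_0$ become uniformly small in the relevant small-time seminorms, so no a priori smallness assumption on $u_0$ is required. Second, I would upgrade the regularity of $\bar u$ using $u_0 \in \dot B^s_{q,q}(\R^3)$, via a persistence-of-regularity argument in Chemin-Lerner spaces $\widetilde L^\rho_T \dot B^s_{q,q}$ together with paraproduct/remainder estimates for the bilinear Duhamel term. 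The constraint $s \in (-1 + 2/q, 0)$ with $q > 3$ is exactly what makes these estimates close, and places $\bar u$ on a possibly smaller interval $[0, T_2)$ in a Serrin-type class. Combined with $u_0 \in L^2(\R^3)$, standard parabolic arguments then give $\bar u \in C([0,T_2]; L^2(\R^3)) \cap L^2(0, T_2; \dot H^1(\R^3))$ together with the energy identity, so that $\bar u$ is itself a weak Leray-Hopf solution.

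The main obstacle is the persistence-of-regularity step: propagating the $\dot B^s_{q,q}$ regularity along a mild solution that is only known to lie in the Koch-Tataru critical class and whose size one does not control. One must simultaneously track the Koch-Tataru quantity and the subcritical Chemin-Lerner norm and reconcile them via paraproduct estimates, with the strict lower bound $s > -1 + 2/q$ precisely what ensures the indices balance correctly. Once persistence is established, the weak-strong uniqueness step is standard: setting $w := u - \bar u$ for any competing weak Leray-Hopf solution $u$, testing the difference equation against $w$, controlling $\int_{\R^3} w \cdot \nabla w \cdot \bar u \, dx$ by H\"older, Sobolev, and the Serrin bound on $\bar u$, and applying Gronwall's inequality yields $w \equiv 0$ on $[0,\hat T(u_0)]$.
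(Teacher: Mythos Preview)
Your proposal has a genuine gap in the persistence step. You assert that propagating the $\dot B^{s}_{q,q}$ regularity of $u_0$ places the Koch--Tataru solution $\bar u$ in a Serrin-type class on which the classical Ladyzhenskaya--Prodi--Serrin energy/Gronwall argument closes, and that ``the constraint $s\in(-1+\tfrac{2}{q},0)$ with $q>3$ is exactly what makes these estimates close.'' This is not so for the full stated range. The scaling-critical exponent for $\dot B^{s}_{q,q}$ is $s=-1+\tfrac{3}{q}$; for $s\in(-1+\tfrac{2}{q},\,-1+\tfrac{3}{q}]$ the space $\dot B^{s}_{q,q}$ is \emph{supercritical}, and persistence there does not yield an $L^{p}_{T}L^{r}_{x}$ Serrin bound (concretely, the heat-flow characterisation gives $\|\bar u(t)\|_{L^{q}}\lesssim t^{s/2}$, and membership in the Serrin class $L^{p_S}_{T}L^{q}_{x}$ with $\tfrac{2}{p_S}+\tfrac{3}{q}=1$ would require $s>-1+\tfrac{3}{q}$). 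Interpolating with $u_0\in L_{2}$ does not rescue this: $L_{2}=\dot B^{0}_{2,2}$ is itself supercritical, and a direct computation shows that real interpolation between $\dot B^{0}_{2,2}$ and $\dot B^{s}_{q,q}$ reaches a subcritical Besov space only when $s>-1+\tfrac{3}{q}$. Consequently, in your final step the trilinear term $\int_{0}^{t}\!\!\int \bar u\otimes w:\nabla w$ is controlled only by $\int_{0}^{t}\|\bar u(s)\|_{L^{\infty}}^{2}\|w(s)\|_{L^{2}}^{2}\,ds$, and since the Koch--Tataru bound gives merely $\|\bar u(s)\|_{L^{\infty}}\lesssim s^{-1/2}$, the Gronwall factor is $\int_{0}^{t}s^{-1}\,ds=+\infty$ --- the argument does not close. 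This is precisely the obstruction the paper flags when it says Leray's approach is not known to apply here.

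The proof actually given in \cite{barker2018} (summarised in the present paper) circumvents this by a Calder\'on-type \emph{splitting} rather than interpolation or persistence. One writes $u_0=u_0^{1}+u_0^{2}$ with $u_0^{2}\in \dot B^{-1+\frac{3}{p}+\delta}_{p,p}$ genuinely subcritical and $u_0^{1}\in L_{2}$ (cf.\ Corollary~\ref{Decomp}); the subcritical piece is used not to upgrade $\bar u$ into a Serrin class, but to prove an a~priori decay $\|w(\cdot,t)\|_{L_{2}}^{2}\leq C\,t^{\beta}$ for the difference $w=v-\bar u$ near $t=0$ (Lemma~\ref{estimatenearinitialtime}). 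This decay \emph{depletes} the non-integrable $s^{-1}$ singularity: one compares the quantity $\|w(\cdot,t)\|_{L_{2}}^{2}/t^{\beta}$ and obtains a closable differential inequality, from which $w\equiv 0$ on $(0,\hat T(u_0))$ follows. The lower bound $s>-1+\tfrac{2}{q}$ enters exactly in producing the decomposition and the exponent $\beta>0$, not in any Serrin-type bound on $\bar u$.
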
 
 The main result of this paper, which we state below, shows that for such classes of initial data, weak Leray-Hopf solutions are Hadamard well posed locally in time.

\begin{theorem}\label{maintheo}
Suppose that there exists $q>3$ and $s\in(-1+\frac{2}{q},0)$ such that
\begin{equation}\label{initialdataassumptionmaintheo}
u_{0}\in J(\mathbb{R}^3)\cap VMO^{-1}(\mathbb{R}^3)\cap\dot{{B}}^{s}_{q,q}.
\end{equation} 
Let $\hat{T}(u_{0})$ be as in the above Theorem and let $u$ be the unique Leray-Hopf solution associated with $u_{0}$. Then for any positive $\eta\in(0,1)$ there exists  $T(\eta, u_{0},s,q)\in (0, \hat{T}]$ and $C(\eta, u_{0},s,q)>0$ such that the following holds. For any weak Leray solution $v$ associated with $v_{0}$ with 
\begin{equation}\label{unitballu0}
\|v_{0}-u_{0}\|_{L^{2}(\mathbb{R}^3)}<1,
\end{equation}
 we have that for all $t\in [0, T]$
\begin{equation}\label{L2stabmaintheo}
\|v(t)-u(t)\|_{L_{2}}^2+\int\limits_{0}^{t} \|\nabla(v-u)(t')\|_{L_{2}}^2 dt'\leq C\|v_{0}-u_{0}\|_{L^{2}}^{2-2\eta}.
\end{equation}

\end{theorem}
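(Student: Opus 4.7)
The plan is to combine (i) a Calderón-style decomposition of the initial data, in the spirit of \cite{Calderon90}, \cite{barker2018}, and \cite{chemin}, (ii) the persistence-of-regularity result announced in the abstract, and (iii) a Serrin--Prodi-type weak-strong stability estimate applied to $w := v - u$.

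First, for a small parameter $\delta \in (0,1)$ to be optimized later, I would decompose $u_0 = a_0^\delta + b_0^\delta$ so that $b_0^\delta$ is small in a critical scaling space (for instance $\dot B^{-1+3/q}_{q,\infty}$ or $VMO^{-1}$) while $a_0^\delta$ belongs to a stronger space, at the cost of a norm that blows up polynomially in $\delta^{-1}$. By the uniqueness statement Theorem 1.3 [3], the unique Leray-Hopf solution $u$ admits a matching decomposition $u = A^\delta + B^\delta$ on some interval $(0, T_1(u_0, \delta)]$, with $A^\delta$ the regular solution associated to $a_0^\delta$ and $B^\delta$ a small Koch--Tataru-type mild solution associated to $b_0^\delta$. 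Applying the new persistence-of-regularity result to $A^\delta$ would then place it, on this interval, in a subcritical Serrin class $A^\delta \in L^\alpha(0,T_1;L^Q)$ with $2/\alpha + 3/Q \leq 1$, the norm depending polynomially on $\delta^{-1}$.

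Next, I would test the equation for $w$ against $w$. Combining the Leray-Hopf energy inequality for $v$, the energy identity for $u$ on $(0,T_1]$ (afforded by its enhanced regularity), and the divergence-free cancellation $\int u \cdot (w \cdot \nabla) w \, dx = 0$, one gets
\[
\tfrac12 \tfrac{d}{dt}\|w\|_{L^2}^2 + \|\nabla w\|_{L^2}^2 \leq |\langle w \cdot \nabla A^\delta, w \rangle| + |\langle w \cdot \nabla B^\delta, w \rangle|.
\]
The first term on the right is controlled via Hölder and Gagliardo--Nirenberg using the Serrin-class integrability of $A^\delta$, while the second is absorbed into $\tfrac12 \|\nabla w\|_{L^2}^2$ thanks to the smallness of $B^\delta$ in the critical norm. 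Grönwall's inequality then yields
\[
\|w(t)\|_{L^2}^2 + \int_0^t \|\nabla w\|_{L^2}^2 \,ds \leq \|w_0\|_{L^2}^2 \exp\!\Big(C(\delta)\,t\Big),
\]
with $C(\delta) \lesssim \delta^{-N}$ for some finite $N$.

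Finally, to extract the exponent $2 - 2\eta$, I would calibrate the parameters: given $\eta \in (0,1)$, pick $T = T(\eta, u_0, s, q) \in (0, T_1]$ and $\delta = \delta(\|w_0\|_{L^2}, \eta, u_0)$ so that $C(\delta)\,T \leq 2\eta\,|\log \|w_0\|_{L^2}|$, which turns $\exp(C(\delta)\,T)$ into $\|w_0\|_{L^2}^{-2\eta}$ and produces the claimed estimate. The principal obstacle is the persistence-of-regularity step: one needs a quantitative smoothing statement for Leray-Hopf solutions valid away from the perturbative regime, with explicit polynomial dependence on $\delta^{-1}$ reflecting the Calderón regularity-versus-smallness tradeoff. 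This is exactly the new tool advertised as of independent interest in the abstract; once it is in hand, the rest is a careful but technically classical adaptation of weak-strong stability to the split $u = A^\delta + B^\delta$.
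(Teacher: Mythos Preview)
Your proposal has the right high-level ingredients (Calder\'on-type splitting, persistence of regularity, stability via energy methods), but it misidentifies the mechanism, and the argument as sketched would break at two places.

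First, the treatment of the ``small'' piece. You take $b_0^\delta$ small in a \emph{critical} space such as $VMO^{-1}$ and claim $|\langle w\cdot\nabla B^\delta,w\rangle|$ can be absorbed into $\tfrac12\|\nabla w\|_{L^2}^2$. But a small Koch--Tataru solution only gives $\sqrt{t}\,\|B^\delta(t)\|_{L^\infty}\le\varepsilon$; after integration by parts and Young's inequality the Gronwall weight is $\varepsilon^2/t$, which is not integrable at $t=0$. This is exactly the obstruction the introduction flags: for these endpoint classes Leray's approach is not known to apply, because the trilinear term against the strong solution need not even converge. In the paper the split is arranged differently: $u_0^2\in\dot B^{-1+3/p+\delta}_{p,p}$ is \emph{subcritical}, so $\|e^{t\Delta}u_0^2\|_{L^\infty}\lesssim t^{-(1-\delta)/2}$ and the resulting weight $t^{-(1-\delta)}$ is integrable. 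The Calder\'on decomposition is applied once and fixed; there is no parameter $\delta$ optimised against $\|w_0\|_{L^2}$.

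Second, and more fundamental, the persistence result (Proposition~\ref{persistregularity}) does \emph{not} place $U:=u-e^{t\Delta}u_0^2$ in a Serrin class. It only yields $U\in L^\infty(0,T;H^\alpha)$ for some small $\alpha>0$, which is far from any $L^r_TL^Q$ with $2/r+3/Q\le 1$. The remaining initial data $u_0^1$ still lies in the critical space $VMO^{-1}$, so the direct Serrin route is blocked. The paper instead follows Chemin's frequency-cutoff strategy: compare $V:=v-e^{t\Delta}u_0^2$ with the low-frequency truncation $\dot S_jU$ rather than with $U$. Since $\dot S_jU$ is genuinely subcritical (indeed $\|\nabla\dot S_jU\|_{L^1_TL^\infty}\le \varepsilon j\log 2+C$), Gronwall applies, at the cost of (i) an exponential growth factor $2^{j\hat\varepsilon}$ and (ii) a Reynolds stress $F_j$ in the equation for $\dot S_jU$. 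The new idea is precisely that the weak persistence $U\in L^\infty_TH^\alpha$ is enough to force $\|F_j\|_{L^2_TL^2_x}\le C\,2^{-j\gamma}$ (Proposition~\ref{mainreynoldsest}); this decay---not a Serrin bound---is what the persistence result buys. The parameter optimised against $\|v_0-u_0\|_{L^2}$ is the frequency index $j$, and the competition $2^{-2j\gamma}$ versus $2^{j\hat\varepsilon}$ is what produces the exponent $2-2\eta$.
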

\subsection{Comparison with previous literature}
The classical approach to determining $\mathcal{Z}$ such that (\textbf{Q}) holds true dates back to Leray in \cite{Le} (we refer to this as `Leray's approach'). Let us now describe this in more detail.

Let $v(\cdot,v_{0})$ and $u(\cdot,u_{0})$ be two weak Leray-Hopf solutions with $u_{0}\in \mathcal{Z}\cap J(\mathbb{R}^3)$, $v_{0}\in J(\mathbb{R}^3)$\footnote{Throughout this paper $J(\mathbb{R}^3):=\{u_{0}\in L_{2}(\mathbb{R}^3)$: $\textrm{div}\,u_{0}=0\}.$} and $w\equiv u-v$. In Leray's approach, one requires the existence of $u(\cdot,u_{0})$ in path spaces $\mathcal{X}_{T}$ possessing certain properties. In particular,
 $a\in \mathcal{X}_{T}$ and $b\,c\in C_{w}([0,T]; J(\mathbb{R}^3)\cap L^{2}([0,T]; \dot{H}^{1}(\mathbb{R}^3))$ $\Rightarrow$
$$F(a,b,c,t):=\int\limits_{0}^{t}\int\limits_{\mathbb{R}^3} (a\otimes b):\nabla c dxdt'<\infty$$
for $t\in [0,T]$ and satisfies certain continuity estimates (see, for example, \cite{dubois}). Once $\mathcal{X}_{T}$ satisfies this requirement, the approach in \cite{Le} gives a positive answer to \textbf{(Q)} by applying Gronwall's lemma to the energy inequality
\begin{equation}\label{energyestintroI}
\|w(\cdot,t)\|_{L^{2}_{x}}^2+2\int\limits_{0}^{t}\int\limits_{\mathbb{R}^3} |\nabla w|^2 dxdt'\leq \|u_{0}-v_{0}\|_{L^{2}_{x}}^2+ 2\int\limits_{0}^{t}\int\limits_{\mathbb{R}^3} u\otimes w: \nabla w dxdt'.
\end{equation}
Leray's approach was first used in \cite{Le} to show that for $\mathcal{Z}=H^{1}(\mathbb{R}^3)$ and $\mathcal{Z}= L_{p}(\mathbb{R}^3)$ ($3<p\leq\infty$) we have local-in-time Hadamard well-posedness of  of `turbulent solutions' (which are a subclass of Leray-Hopf solutions). Leray's approach has been applied to many other cases and we only attempt to list the cases most relevant to this paper. At the start of the $21^{st}$ century, \cite{GP2000} utilized Littlewood-Paley theory and Leray's approach to provide a positive answer for question \textbf{(Q)} for the homogeneous Besov spaces $$\mathcal{Z}= \dot{B}_{p,q}^{-1+\frac{3}{p}}(\mathbb{R}^3)$$ with $p, q<\infty$ and $$\frac{3}{p}+\frac{2}{q}\geq 1.$$ Certain further extensions were provided in \cite{dubois}. 

For the wider yet class\footnote{We denote $\dot{B}^{-1+\frac{3}{p}}_{p,\infty}$ to be the homogeneous Besov space and $\dot{\mathbb{B}}^{-1+\frac{3}{p}}_{p,\infty}$ to be the subspace of closure of  Schwartz functions.} $\mathcal{Z}=\dot{\mathbb{B}}^{-1+\frac{3}{p}}_{p,\infty}$ ($p\in (3,\infty)$), arguments in \cite{cannone}\footnote{For an exposition of these arguments, we also refer to \cite{barker2018}.} imply that there exists a $T(u_{0})$ and a weak Leray-Hopf solution $u(\cdot,u_{0})$ that is infinitely smooth on $\mathbb{R}^3\times (0,T(u_{0}))$. However, for this case the main difficulty is that it is unknown if Leray's approach is applicable. Specifically,  when $u_{0}$ belongs to the above class and $w$ belongs to the energy space (without assuming $w$ solves an equation) it is not known that this trilinear term $$\int\limits_{0}^{T}\int\limits_{\mathbb{R}^3} u(\cdot,u_{0})\otimes w:\nabla w dxdt' $$ in \eqref{energyestintroI} is even convergent.

 These difficulties were tackled by Jean-Yves Chemin in \cite{chemin}, which provided a positive answer to \textbf{(Q)} for $\mathcal{Z}=\dot{H}^{\alpha}(\mathbb{R}^3)\cap\dot{\mathbb{B}}^{-1+\frac{3}{p}}_{p,\infty}$ ($\alpha>0$ and $p\in (3,\infty)$) by means of the following theorem.
\begin{theorem}\label{Chemin}
Suppose that there exists $\alpha>0$ and $p\in(1,\infty)$ such that
\begin{equation}\label{initialdataassumption}
u_{0}\in J(\mathbb{R}^3)\cap \dot{H}^{\alpha}(\mathbb{R}^3)\cap\dot{\mathbb{B}}^{-1+\frac{3}{p}}_{p,\infty}.
\end{equation}
Furthermore, let $T$ be such that the strong solution $u$ associated with $u_{0}$ is defined on $\mathbb{R}^3\times (0,T)$. Then for any positive $\eta$, a constant $C$ exists such that, for any weak Leray solution $v$ associated with $v_{0}$, we have that if $\|v_{0}-u_{0}\|_{L_{2}}$ is small enough that
\begin{equation}\label{L2stab}
\frac{1}{2}\|v(t)-u(t)\|_{L_{2}}^2+\int\limits_{0}^{t} \|\nabla(v-u)(t')\|_{L_{2}}^2 dt'\leq C\|v_{0}-u_{0}\|_{L^{2}}^{2-2\eta}.
\end{equation}

\end{theorem}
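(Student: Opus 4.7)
The plan is to compare the weak Leray--Hopf solution $v$ with the strong solution $u$ by means of the $L^{2}$-energy inequality satisfied by $w:=v-u$, after securing enough regularity of $u$ from the two assumptions on $u_{0}$. The critical Besov hypothesis $u_{0}\in\dot{\mathbb{B}}^{-1+3/p}_{p,\infty}(\mathbb{R}^{3})$ yields, through Kato's fixed-point scheme in a scale-critical Kato space, a mild solution $u$ on an interval $[0,T]$ with small critical-scale norm. Combined with the subcritical assumption $u_{0}\in\dot{H}^{\alpha}(\mathbb{R}^{3})$, an $\dot{H}^{\alpha}$-energy estimate --- in which the cubic term is absorbed thanks to the small critical norm of $u$ --- produces the persistence of regularity
$$u\in L^{\infty}([0,T];\dot{H}^{\alpha}(\mathbb{R}^{3}))\cap L^{2}([0,T];\dot{H}^{1+\alpha}(\mathbb{R}^{3})).$$

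Next, subtracting the Navier--Stokes identity for $u$ from the Leray--Hopf energy inequality for $v$, and using $\int_{\mathbb{R}^{3}}(w\cdot\nabla)w\cdot w\,dx=0$, one arrives at
\begin{equation*}
\|w(t)\|_{L^{2}}^{2}+2\int_{0}^{t}\|\nabla w\|_{L^{2}}^{2}\,ds\leq\|v_{0}-u_{0}\|_{L^{2}}^{2}+2\int_{0}^{t}\!\int_{\mathbb{R}^{3}}u\otimes w:\nabla w\,dx\,ds.
\end{equation*}
The $L^{2}_{t}\dot{H}^{1+\alpha}$-regularity of $u$ renders the trilinear term well defined. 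I would then combine H\"older with the Sobolev embedding $\dot{H}^{1+\alpha}\hookrightarrow L^{6/(1-2\alpha)}$ and Gagliardo--Nirenberg interpolation of $w$ between $L^{2}$ and $L^{6}$, followed by Young's inequality, to obtain
$$\left|\int_{\mathbb{R}^{3}}u\otimes w:\nabla w\,dx\right|\leq\tfrac{1}{2}\|\nabla w\|_{L^{2}}^{2}+C\,g(t)\,y(t)\bigl(1+\log_{+}(E_{0}/y(t))\bigr),$$
where $y(t):=\|w(t)\|_{L^{2}}^{2}$ and $g\in L^{1}(0,T)$ is controlled by $\|u\|_{L^{\infty}_{T}\dot{H}^{\alpha}\cap L^{2}_{T}\dot{H}^{1+\alpha}}$; the logarithmic correction reflects that the critical-scale regularity of $u$ falls strictly short of the Prodi--Serrin threshold, and is extracted by splitting $u$ into low and high frequencies via a paraproduct decomposition.

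Finally, dropping the dissipation and integrating produces $y(t)\leq y(0)+C\int_{0}^{t}g(s)y(s)(1+\log_{+}(E_{0}/y(s)))\,ds$, to which Osgood's inequality delivers $y(t)\leq C\,y(0)^{\kappa(t)}$ with $\kappa(t):=\exp(-C\int_{0}^{t}g)\in(0,1]$. Taking the sub-interval $[0,T^{*}]\subset[0,T]$ so small that $\kappa(T^{*})\geq 1-\eta$ then yields \eqref{L2stab}. The main obstacle will be the logarithmic-loss trilinear estimate: its rigorous derivation requires a careful paraproduct splitting that isolates the low-frequency part of $u$ (controlled by the subcritical $\dot{H}^{1+\alpha}$-norm) from the high-frequency part (controlled by the critical Besov norm), in such a way that exactly one logarithmic factor is produced.
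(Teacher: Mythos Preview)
Your proposal takes a genuinely different route from Chemin's proof (which is the one the paper describes). Chemin does \emph{not} work with $w=v-u$ directly, nor does he invoke Osgood's lemma. Instead he compares $v$ with the low-frequency truncation $S_{j}u$: setting $w_{j}:=v-S_{j}u$, he obtains an energy inequality for $w_{j}$ in which (i) the trilinear term involves $S_{j}u$, which \emph{is} subcritical, so ordinary Gronwall applies with constant $\exp(C\|\nabla S_{j}u\|_{L^{1}_{T}L^{\infty}})\le C2^{\varepsilon j}$; and (ii) a Reynolds stress $F_{j}=S_{j}u\otimes S_{j}u-S_{j}(u\otimes u)$ appears as forcing, for which he proves the decay $\|F_{j}\|_{L^{2}_{T}L^{2}_{x}}\lesssim 2^{-j/(p-2)}$ via paraproduct analysis. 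The assumption $u_{0}\in\dot{H}^{\alpha}$ enters only through $\|u_{0}-S_{j}u_{0}\|_{L^{2}}\lesssim 2^{-j\alpha}$. These exponential decays in $j$ beat the exponential growth $2^{\varepsilon j}$, and one concludes by choosing $j\sim -\log\|v_{0}-u_{0}\|_{L_{2}}$.

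Your scheme has a real gap at the point you yourself flag as the ``main obstacle.'' First, the energy inequality you write for $w=v-u$ is not obviously justified: the paper explicitly notes that for $u_{0}\in\dot{\mathbb{B}}^{-1+3/p}_{p,\infty}$ it is \emph{not known} that $\int_{0}^{T}\!\!\int u\otimes w:\nabla w$ even converges when $w$ is merely in the energy class. Your extra input $u\in L^{2}_{T}\dot{H}^{1+\alpha}$ does not remedy this for small $\alpha$: the H\"older count you sketch leaves a time integrability deficit (one finds $\tfrac12+\tfrac{1-2\alpha}{4}+\tfrac12>1$), so the trilinear term is not shown to be finite, let alone to obey an $y\log(1/y)$ bound. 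Second, the persistence claim $u\in L^{\infty}_{T}\dot{H}^{\alpha}\cap L^{2}_{T}\dot{H}^{1+\alpha}$ by an $\dot{H}^{\alpha}$-energy estimate does not close: the cubic term produces a Gronwall factor $\exp(C\!\int_{0}^{t}\|u\|_{L^{\infty}}^{2})$, and since $\|u(s)\|_{L^{\infty}}\sim s^{-1/2}$ near $s=0$ this integral diverges. (The paper's persistence result, Proposition~\ref{persistregularity}, obtains only $L^{\infty}_{T}H^{\alpha}$ for a possibly smaller $\alpha$, by a different argument through the mild formulation.)

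The ``careful paraproduct splitting'' you say you need is precisely Chemin's idea: once you split $u=S_{j}u+(I-S_{j})u$ and treat the two pieces separately, you are no longer proving a log-loss trilinear estimate toward Osgood---you are running Gronwall on $w_{j}=v-S_{j}u$ and optimising in $j$, which is the actual proof. In other words, the missing lemma is the whole argument.
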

 The heuristic idea of Jean-Yves Chemin is to split the strong solution $u$ into a low frequency part\footnote{For $j\geq 0$, the Fourier transform of $S_{j}u$ is compactly supported in $B(0,\frac{4}{3}2^{j})$.} $S_{j}u$ and a high frequency part. Then  $w_{j}:= v-S_{j}u$ satisfies the equation
 \begin{equation}\label{perturbedwithReynoldschemin}
 \partial_{t} w_{j}-\Delta w_{j}+ S_{j}u\cdot\nabla w_{j}+w_{j}\cdot\nabla S_{j}u+w_{j}\cdot\nabla w_{j}=-\nabla p_{j}-\nabla\cdot F_{j}
 \end{equation}
 \begin{equation}\label{icchemin}
 \textrm{div}\,w_{j}=0,\,\,\,\,w_{j}(\cdot,0)= v_{0}-S_{j}u_{0}
 \end{equation}
 \begin{equation}\label{Reynoldsstresschemin}
 R_{j}:= \nabla\cdot F_{j},\,\,\, F_{j}= S_{j}u \otimes S_{j}u- S_{j}(u\otimes u).
 \end{equation}
 A major part of Chemin's work involves paraproduct type analysis in frequency space to estimate the Reynolds stress $R_{j}$ of the strong solution $u$ with initial data $\dot{\mathbb{B}}^{-1+\frac{3}{p}}_{p,\infty}\cap L_{2}.$ He gets
 \begin{equation}\label{Reynoldsestchemin}
 \|F_{j}\|_{L_{2}(0,T; L_{2}(\mathbb{R}^3))}\leq C_{u,T} 2^{-\frac{j}{p-2}}.
 \end{equation}
 
 When one applies Gronwall's lemma (which can be done since $S_{j}u$ belongs to subcritical spaces) one gets
 \begin{equation}\label{Cheminenergyinequality}
 \|w_{j}(t)\|_{L_{2}}^2+\int\limits_{0}^{t}\|\nabla w_{j}(t')\|_{L_{2}}^2 dt'\leq C\|v_{0}\|_{L_{2}}^2+C\|u_{0}-S_{j}(u_{0})\|_{L_{2}}^2+ \int\limits_{0}^{t}\int\limits_{\mathbb{R}^3} S_{j}(u)\otimes w_{j}: \nabla w_{j} dxdt'.
 \end{equation}
 He then uses that $u$ 'just misses' by a logarithm  belonging to a `good' critical space (that allows Gronwall to be performed). In particular, $S_{j}(u)$ belongs to such `good' spaces but has a corresponding norm which grows like $\varepsilon j$ (for arbitrary $\varepsilon>0$) as the frequency parameter $j$ grows.
 Once the Gronwall argument is performed this produces 
 $$ \|w_{j}(t)\|_{L_{2}}^2\leq C(\|v_{0}\|_{L_{2}}^{2}+\|u_{0}-S_{j}(u_{0})\|_{L_{2}}^2+\|F_{j}\|_{L_{2}(0,T; L_{2})}^2)\exp(\varepsilon j).$$
The fact that $u_{0}\in \dot{H}^{\alpha}$  gives an exponential decay as $j$ grows of $\|u_{0}-S_{j}(u_{0})\|_{L_{2}}^2$. This, in conjunction with the exponential decay for the Reynold's stress \eqref{Reynoldsestchemin}, crucially offsets the small exponential growth coming from Gronwall's lemma. With a bit more work, this allows Chemin to conclude the proof of Theorem \ref{Chemin}.

 For the case $u_{0}=v_{0}$ the author has shown weak-strong uniqueness for wider yet classes of initial data than those considered by Chemin in \cite{chemin}. Specifically,  $u_{0}\in J(\mathbb{R}^3)\cap VMO^{-1}\cap \dot{B}^{s}_{q,q}$ with $s\in (-1+\frac{2}{q},0)$ (see also Lemarie-Rieusset \cite{LR19} for recent extensions for the class of `local Leray solutions'). For such classes of initial data, the author showed that
\begin{equation}\label{barkerweakstrong}
\|w(\cdot,t)\|_{L_{2}}^2\leq Ct^{\beta}.
\end{equation}
Such a decay depletes the singularity near the initial time due to $u$ having rough initial data. This allows us to infer that
$$\|w(\cdot,t)\|_{L^{2}_{x}}^2\leq C\int\limits_{0}^{t} \frac{(\sup_{0<s<T}s^{\frac{1}{2}}\|u(\cdot,s)\|_{L^{\infty}_{x}})^2}{s}\|w(\cdot,s)\|_{L^{2}_{x}}^2 ds .$$
Then the conclusion of weak-strong uniqueness in reached in \cite{barker2018} by a comparison of the quantity\footnote{ Comparison of this quantity was previously exploited by Dong and Zhang to prove weak-strong uniqueness results in \cite{dongzhang}.}
\begin{equation}\label{quantityweakstrong}
\frac{\|w(\cdot,t)\|_{L_{2}}^2}{t^{\beta}}.
\end{equation}
Unfortunately, such a strategy cannot prove Theorem \ref{maintheo}, since $w$ in that case isn't zero as $t\downarrow 0$ and hence has no decay to deplete the singularity in time of $u$. Hence despite weak-strong uniqueness being known for such initial data, the stronger result of $L_{2}$ stability remained open. In this paper we settle this case by means of Theorem \ref{maintheo}.
  
 \subsection{Novelty of our results}
The proof of Theorem \ref{maintheo} requires involves two observations that differ from from Chemin's proof of Theorem \ref{Chemin}

  The first observation is somewhat similar to the author's work on weak-strong uniqueness \cite{barker2018}, which was in turn inspired by the work of Calder\'{o}n \cite{Calderon90}. The difference compared to the authors work on weak-strong uniqueness  is that the space $\dot{H}^{\hat{\alpha}}$ was not used for the splittings, since $L_{2}$ instead played a prominent role. However, here this extra information must be kept to get the $L_{2}$ stability. Specifically if $u_{0}\in J(\mathbb{R}^3)\cap VMO^{-1}(\mathbb{R}^3)\cap  \dot{B}^{s}_{q,q}$ with $s\in (-1+\frac{2}{q},0)$, then we show that
$u_{0}=u_{0}^{1}+u_{0}^2$  with
$$u_{0}^{2} \in \dot{{B}}^{-1+\frac{3}{p}+\delta}_{p,p}(\mathbb{R}^3)\cap J(\mathbb{R}^3)\,\,\,\,\,\textrm{and}\,\,\,\,\,\,u_{0}^{1}\in\dot{H}^{\hat{\alpha}}\cap VMO^{-1}(\mathbb{R}^3)\cap J(\mathbb{R}^3).$$
We then reduce to considering $L_{2}$ stability of energy solutions  of the \textit{perturbed} Navier-Stokes equations.
\begin{equation}\label{perturbedNSE}
\partial_{t}U-\Delta U+U\cdot\nabla  U+ e^{t\Delta} u_{0}^{2}\cdot\nabla U+ U\cdot\nabla e^{t\Delta} u_{0}^{2}+\nabla P= -e^{t\Delta} u_{0}^{2}\cdot\nabla e^{t\Delta} u_{0}^{2}
\end{equation}
\begin{equation}\label{perturbedid}
\textrm{div}\,U=0,\,\,\,\,\,\,U(\cdot,0)=u_{0}^{1}
\end{equation}
In particular, the main goal reduces to showing an analogy of Theorem \ref{Chemin} but for this perturbed Navier-Stokes system and with initial data $u_{0}^{1}$.

 Recall that in Chemin's proof the exponential decay of the Reynold's stress \eqref{Reynoldsstresschemin} is crucial to offset the exponential growth in frequency parameter coming from estimates of the low frequency part $S_{j}u$ used for the application of Gronwall's lemma. However, notice that the estimate of the Reynold's stress \eqref{Reynoldsestchemin} does not possess any decay in $j$ as $p$ tends to infinity. Consequently, the main difficulty in proving Theorem \ref{maintheo} is that $u_{0}^{1}$ belongs to an $L_{\infty}$ based critical space $VMO^{-1}$. In particular, the arguments in \cite{chemin} seem to not give the required exponential decay for the Reynold's stress \eqref{Reynoldsstresschemin} for the spaces that $u_{0}^{1}$ belongs to.

 The second observation and main new idea of this paper is to overcome this difficulty by using additional information about the strong solution $U(\cdot, u^{1}_{0})$ which was not exploited in Chemin's paper. In particular, we use $u_{0}^{1}\in \dot{H}^{\hat{\alpha}}\cap J(\mathbb{R}^3)\cap VMO^{-1}(\mathbb{R}^3)$ to show\footnote{We mention that persistency arguments proven in \cite{PGLRpersistency} applied to strong solutions to the perturbed Navier-Stokes equations  would also suffice to show \eqref{strongsolutionpersistintro} with $\alpha=\hat{\alpha}$.  By comparison the Proposition we show does not require initial data that generates the existence of a local-in-time strong solution (such as $u_{0}^{1}\in VMO^{-1}$). Furthermore, the Proposition we give does not require that the solution $U$ is in critical spaces. } (see Proposition \ref{persistregularity}) that there exists $\alpha(\hat{\alpha},\delta)\in (0, \hat{\alpha}]$ such that

 \begin{equation}\label{strongsolutionpersistintro}
 U\in L_{\infty}(0,T; {H}^{\alpha}).
 \end{equation}
 This gives a decay of the $L^{\infty}_{t}L^{2}_{x}$ space-time norm involving the high frequencies of $U$, which gives that the associated Reynold's stress for the perturbed Navier-Stokes equations \eqref{perturbedNSE} has an exponential decay in $j$ depending on $\alpha$.

 We finally mention that it is crucial that  $e^{t\Delta}u_{0}^{2}$ belongs to subcritical spaces. In particular this means the extra terms in the perturbed Navier-Stokes equations do not destroy the arguments involving Gronwall's lemma. 
\subsection{Points of Independent Interest and Further Remarks}
\subsubsection{Partial Propagation of Regularity for the Perturbed Navier-Stokes Equations}
In Section 3.2,  we prove the following propagation of regularity result.

\begin{pro}\label{persistregularity}
Let $T>0$ be finite. Suppose that $V$ is divergence-free and
\begin{equation}\label{Vassumptionpersist}
V\in L^{\infty}_{T}L^{2},\,\,\,\,\sup_{0<t<T} t^{\frac{1}{2}(1-\delta)}\|V(\cdot,t)\|_{L^{\infty}_{x}}<\infty.
\end{equation}
Furthermore, suppose that there exists $\hat{\alpha}\in (0,1)$ such that 
\begin{equation}\label{u0persist}
u_{0}^{1}\in J(\mathbb{R}^3)\cap \dot{H}^{\hat{\alpha}}(\mathbb{R}^3)\cap \dot{B}^{-1}_{\infty,\infty}(\mathbb{R}^3).
\end{equation}
Assume that $U\in C_{w}([0,T]; J(\mathbb{R}^3))\cap L^{2}_{T} \dot{H}^{1}$ is a weak solution to the equation
\begin{equation}\label{Uequationpersist}
\partial_{t} U-\Delta U+V\cdot\nabla U+U\cdot\nabla V+U\cdot\nabla U+V\cdot\nabla V+ \nabla \Pi=0\,\,\,\textrm{in}\,\,\,\mathbb{R}^3\times (0,T)
\end{equation}
\begin{equation}\label{Uidpersist}
\textrm{div}\,U=0,\,\,\,U(\cdot,0)=u_{0}^{1}.
\end{equation}
Furthermore, assume  that $U$ satisfies the energy inequality for $t\in [0,T]$: 
\begin{equation}\label{Uenergyinequalitypersist}
 \|U(\cdot,t)\|_{L^{2}(\mathbb{R}^3)}^2+2\int\limits_{0}^{t}\int\limits_{\mathbb{R}^3} |\nabla U(y,s)|^2 dyds\leq \|u^{1}_{0}\|_{L^{2}(\mathbb{R}^3)}^2+ 2\int\limits_{0}^{t}\int\limits_{\mathbb{R}^3} (V\otimes U+ V\otimes V): \nabla U dyds.
 \end{equation}  
In addition, assume that $U$ satisfies
\begin{equation}\label{Ucritical}
\sup_{0<t<T} t^{\frac{1}{2}}\|U(\cdot,t)\|_{L^{\infty}(\mathbb{R}^3)}<\infty.
\end{equation}
Then the above assumptions allow us to conclude that there exists $\alpha(\hat{\alpha}, \delta)\in (0,\hat{\alpha}]$ such that
\begin{equation}\label{Usobolevpersist}
U\in L^{\infty}(0,T; {H}^{\alpha}(\mathbb{R}^3)).
\end{equation} 
\end{pro}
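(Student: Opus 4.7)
The plan is to propagate $\dot H^{\alpha}$ regularity through the mild formulation together with a bootstrap. First rewrite the equation in conservation form using $\text{div}\,U=\text{div}\,V=0$, obtaining $\partial_{t}U-\Delta U+\nabla\cdot((U+V)\otimes(U+V))+\nabla\Pi=0$, and hence the Duhamel representation
\[
U(t)=e^{t\Delta}u_{0}^{1}-\int_{0}^{t}e^{(t-s)\Delta}\mathbb{P}\,\nabla\cdot g(s)\,ds,\qquad g:=(U+V)\otimes(U+V).
\]
For the linear part $\|e^{t\Delta}u_{0}^{1}\|_{\dot H^{\alpha}}\le\|u_{0}^{1}\|_{\dot H^{\hat\alpha}}$ as soon as $\alpha\le\hat\alpha$, so the whole task reduces to controlling the Duhamel integral in $L^{\infty}_{T}\dot H^{\alpha}$. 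A preliminary justification that $U$ coincides with its mild representative is provided by the integrability of $g$ established in the next step.

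The second step is to quantify the integrability of $g$. From the energy bound $U\in L^{\infty}_{T}L^{2}\cap L^{2}_{T}\dot H^{1}$ and Sobolev interpolation (via $\dot H^{1}\hookrightarrow L^{6}$) one has $U\in L^{10/3}_{T,x}$, hence $U\otimes U\in L^{5/3}_{T,x}$. For the mixed and pure-$V$ terms, the subcritical bound $\|V(t)\|_{L^{\infty}}\lesssim t^{-(1-\delta)/2}$ combined with $V\in L^{\infty}_{T}L^{2}$ and $\delta>0$ places $V\otimes U$ and $V\otimes V$ in $L^{2}_{T}L^{2}_{x}$. The heat-kernel estimate
\[
\|e^{\tau\Delta}\mathbb{P}\nabla\cdot g\|_{\dot H^{\alpha}}\lesssim\tau^{-(1+\alpha)/2-\frac{3}{2}(1/p_{0}-1/2)}\|g\|_{L^{p_{0}}},
\]
combined with Hardy--Littlewood--Sobolev in time, then yields $U\in L^{r_{1}}_{T}\dot H^{\alpha_{1}}$ for some $\alpha_{1}>0$ and finite $r_{1}$.

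The final ingredient is a bootstrap: feed the improved information back into the estimate on $g$---using Sobolev embeddings $\dot H^{\alpha_{1}}\hookrightarrow L^{6/(3-2\alpha_{1})}$ together with the critical $L^{\infty}$-bound on $U$---and repeat the previous Duhamel argument with successively larger target regularity. After finitely many iterations one reaches $U\in L^{\infty}_{T}\dot H^{\alpha}$ for some $\alpha=\alpha(\hat\alpha,\delta)\in(0,\hat\alpha]$. Combined with $U\in L^{\infty}_{T}L^{2}$ from the energy inequality this gives $U\in L^{\infty}_{T}H^{\alpha}$ as required.

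The hardest point is the critical scaling of the $U\otimes U$ nonlinearity: the pointwise bound $\|U(t)\otimes U(t)\|_{L^{2}}\lesssim t^{-1/2}$ is too singular to close a naive Duhamel argument in $\dot H^{\alpha}$ for any $\alpha>0$, since the resulting Beta-type integral $\int_{0}^{t}(t-s)^{-(1+\alpha)/2}s^{-1/2}\,ds$ blows up like $t^{-\alpha/2}$. The resolution is to trade this pointwise $t^{-1/2}$ for the time-averaged integrability $U\otimes U\in L^{5/3}_{T,x}$, which comes from the energy inequality independently of the critical $L^{\infty}$-bound. The admissible range of $\alpha$ is then dictated by the interplay between the $\delta$-gain on $V$, the $\hat\alpha$-regularity ceiling imposed by the initial data, and the balance of heat-kernel exponents at each bootstrap step; it is precisely this interplay that prevents the argument from reaching $\alpha=\hat\alpha$ and forces the loss $\alpha(\hat\alpha,\delta)\le\hat\alpha$.
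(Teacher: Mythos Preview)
Your overall framework via the mild formulation is right, and you correctly identify that the $V$-terms sit in $L^{p}_{T}L^{2}_{x}$ with $p>2$ thanks to the $\delta$-subcritical decay of $V$. The gap is in the treatment of $U\otimes U$ and the bootstrap you propose to reach $L^{\infty}_{T}\dot H^{\alpha}$.

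Concretely: from the energy class and the critical bound alone one only gets $U\otimes U\in L^{p}_{T}L^{2}_{x}$ for $p\le 2$ (for instance $\|U\otimes U\|_{L^{2}_{x}}\le \|U\|_{L^{4}_{x}}^{2}\in L^{2}_{T}$, or $\|U\otimes U\|_{L^{2}_{x}}\lesssim t^{-1/2}$, neither of which beats $L^{2}_{T}$). Your first Hardy--Littlewood--Sobolev step with $U\otimes U\in L^{5/3}_{T,x}$ produces $L(U\otimes U)\in L^{r_{1}}_{T}\dot H^{\alpha_{1}}$ with $r_{1}\approx 4$ for small $\alpha_{1}$; but energy interpolation already gives $U\in L^{2/\alpha_{1}}_{T}\dot H^{\alpha_{1}}$, which is strictly stronger. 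Feeding this back, the quadratic term still carries the $t^{-1/2}$ singularity from the critical $L^{\infty}$ bound, and the iteration does not improve the time integrability towards $L^{\infty}_{T}$. In fact any Duhamel bound of the type $\int_{0}^{t}(t-s)^{-(1+\alpha)/2}\|U\otimes U(s)\|_{L^{2}}\,ds$ will reproduce the $t^{-\alpha/2}$ blow-up you already flagged, and the substitute integrabilities you invoke do not remove it.

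What the paper does instead is the key missing ingredient: it first deduces from $u_{0}^{1}\in \dot H^{\hat\alpha}\cap \dot B^{-1}_{\infty,\infty}$ that $u_{0}^{1}\in \dot B^{(\hat\alpha-1)/2}_{4,4}$ (by interpolation of the Littlewood--Paley pieces), which lies in the range of Lemma~\ref{estimatenearinitialtime}. That lemma yields the \emph{fluctuation decay}
\[
\sup_{0<t<T}\,t^{-\gamma/2}\,\|U(\cdot,t)-e^{t\Delta}u_{0}^{1}\|_{L^{2}_{x}}<\infty\qquad\text{for some }\gamma>0.
\]
One then decomposes $U\otimes U$ around $e^{t\Delta}u_{0}^{1}$; each piece picks up either the $t^{\gamma/2}$ gain from the fluctuation or the $t^{-(1-\hat\alpha)/4}$ decay of $\|e^{t\Delta}u_{0}^{1}\|_{L^{4}_{x}}$, which pushes $U\otimes U$ into $L^{q}_{T}L^{2}_{x}$ for some $q>2$. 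Lemma~\ref{lemmalinearsobolevest} then gives $L(U\otimes U)\in L^{\infty}_{T}H^{\hat\alpha_{2}}$ directly, with no bootstrap needed. Your argument would be repaired by inserting this fluctuation estimate in place of the bootstrap.
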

In \cite{PGLRpersistency} it is shown that when the initial data is in $VMO^{-1}$, the strong solution constructed by an iteration scheme propagates any additional regularity of the initial data on the homogeneous Besov scale. Furthermore, in \cite{vicolsilvestre} it is shown that when $b$ is divergence-free ad belongs to certain critical spaces\footnote{We say $\mathcal{X}\subset \mathcal{S}^{'}(\mathbb{R}^3\times \mathbb{R})$ is a critical space for the Navier-Stokes equations if $\|u\|_{\mathcal{X}}=\|u_{\lambda}\|_{\mathcal{X}}$ for any $\lambda>0$. Here $u_{\lambda}(x,t):= \lambda u(\lambda x, \lambda^2 t)$.}  that one can propagate the H\"{o}lder continuity of the initial data for the drift-diffusion  equation with pressure
\begin{equation}\label{driftdiffusepres}
\partial_{t}u-\Delta u+b\cdot\nabla u+\nabla q=0,\,\,\,\textrm{div}\,u=0,\,\,\,u(\cdot,0)=u^{1}_{0}.
\end{equation}
Although the proof of Proposition \ref{persistregularity} is concise and elementary, perhaps at first sight the statement seems somewhat unexpected. Indeed, it is not known if strong solutions  can be constructed for $u^{1}_{0}$ satisfying \eqref{u0persist}. Furthermore, the result of  Proposition \ref{persistregularity} even holds true when the assumption \eqref{Ucritical} is replaced by certain supercritical assumptions (see Remark \ref{Usupercritical}). Such propagation results may be of independent interest and of use in other contexts.
\subsubsection{Conjectures and remarks}
 Arguments from \cite{kochtataru} and the subsequent paper \cite{LRprioux} show that when $u_{0}\in J(\mathbb{R}^3)\cap VMO^{-1}(\mathbb{R}^3)$ there exists a $T(u_{0})$ and a weak Leray-Hopf solution $u(\cdot,u_{0})$ that is infinitely smooth on $\mathbb{R}^3\times (0,T(u_{0}))$. However, the classes of initial data for which weak-strong uniqueness is proven in \cite{barker2018} (and for which local Hadamard well-posedness is proven by means of Theorem \ref{maintheo})  \textit{just miss} the case $u_{0}\in J(\mathbb{R}^3)\cap VMO^{-1}(\mathbb{R}^3)$. In particular, $$ u_{0}\in J(\mathbb{R}^3)\cap VMO^{-1}(\mathbb{R}^3)= J(\mathbb{R}^3)\cap VMO^{-1}(\mathbb{R}^3)\cap \dot{B}^{s}_{q,q}$$
 with $s=-1+\frac{2}{q}$ and $q\in (2,\infty)$, whereas Theorem 1.3 in \cite{barker2018} assumes $s\in (-1+\frac{2}{q},0)$.
 Despite this, the following conjecture was made in \cite{barker2018}
 \begin{itemize}
 \item[] \textbf{(C)} If $u_{0}\in J(\mathbb{R}^3)\cap VMO^{-1}(\mathbb{R}^3)$, the associated weak Leray-Hopf solutions coincide on some time interval.
 \end{itemize}
 Let us recap reasoning from \cite{LR19} as to why such a conjecture seems plausible. For $u_{0}\in J(\mathbb{R}^3)$, Leray proved existence of at least one global-in-time weak Leray-Hopf solution by first considering the mollified system
 \begin{equation}\label{mollifiedsystem}
 \partial_{t} u_{\epsilon}-\Delta u_{\epsilon}+(\varphi_{\epsilon}\star u_{\epsilon})\cdot\nabla u_{\epsilon}+\nabla p_{\epsilon}=0
 \end{equation}
 \begin{equation}\label{mollifiedsysteminitialdata}
 \textrm{div}\, u_{\epsilon}=0,\,\,\,\,u_{\epsilon}(\cdot,0)= u_{0}.
 \end{equation}
 Here, $\varphi\in C_{0}^{\infty}(\mathbb{R}^3)$, $\int\limits_{\mathbb{R}^3} \varphi(x) dx=1$ and $\varphi_{\epsilon}(x):=\frac{1}{\epsilon^3}\varphi(\frac{x}{\epsilon})$. Then Leray uses energy estimates and compactness arguments to obtain global-in-time weak Leray-Hopf solutions in the limit as $\varepsilon\downarrow 0$.
 In \cite{LR19}, solutions obtained in such a way are called `\textit{restricted Leray solutions}'. Using arguments from \cite{LRprioux} (see also \cite{barker2018} for an exposition of those arguments), one gets that for $u_{0}\in VMO^{-1}\cap J(\mathbb{R}^3)$ there exists $\hat{T}(u_{0})$ such that for all $T\in (0, \hat{T})$ the following holds true. Namely, if $u(\cdot,u_{0})$ is a restricted Leray solution then
 \begin{equation}\label{restrcitedlerayest}
 \|u\|_{\mathcal{E}_{T}}\leq 2\|e^{t\Delta} u_{0}\|_{\mathcal{E}_{T}}.
  \end{equation}
 Here, 
\begin{equation}\label{pathspacenormdefintro}
\|u\|_{\mathcal{E}_{T}}:= \sup_{0<t<T} \sqrt{t}\|u(\cdot,t)\|_{L_{\infty}(\mathbb{R}^3)}+$$$$+\sup_{(x,t)\in \mathbb{R}^3\times ]0,T[}\Big(\frac{1}{|B(0,\sqrt{t})|}\int\limits_0^t\int\limits_{|y-x|<\sqrt{t}} |u|^2 dyds\Big)^{\frac{1}{2}}
\end{equation}
and $e^{t\Delta} u_{0}$ represents the heat-flow acting on $u_{0}$. Then \eqref{restrcitedlerayest} implies that \textbf{(C)} holds true for restricted Leray-solutions\footnote{For analogous statements in bounded domains for initial data in Besov spaces, we refer to \cite{FarwiggigahsuI}-\cite{Farwiggiga}.} by means of the uniqueness of mild solutions  constructed in \cite{kochtataru}.

 In some sense the above reasoning justifies why conjecture \textbf{(C)} is plausible for weak Leray-Hopf solutions. However, it appears to be an open problem to even show local Hadamard well-posedness when $\mathcal{Z}= VMO^{-1}$, for restricted Leray solutions. In turn this makes corresponding conjectures surrounding \textbf{(Q)} when $\mathcal{Z}=VMO^{-1}$ seemingly more speculative than \textbf{(C)}.  
 \begin{section}{Preliminaries}
\subsection{General Notation}

Throughout this paper we adopt the Einstein summation convention. For arbitrary vectors $a=(a_{i}),\,b=(b_{i})$ in $\mathbb{R}^{n}$ and for arbitrary matrices $F=(F_{ij}),\,G=(G_{ij})$ in $\mathbb{M}^{n}$ we put
 $$a\cdot b=a_{i}b_{i},\,|a|=\sqrt{a\cdot a},$$
 $$a\otimes b=(a_{i}b_{j})\in \mathbb{M}^{n},$$
 $$FG=(F_{ik}G_{kj})\in \mathbb{M}^{n}\!,\,\,F^{T}=(F_{ji})\in \mathbb{M}^{n}\!,$$
 $$F:G=
 F_{ij}G_{ij}\,\,\,\textrm{and}
 \,\,\,|F|=\sqrt{F:F}.$$
 Let $e^{t\Delta}u_{0}$ denote the heat kernel convoluted with $u_{0}$. 

  For ${\lambda}\in \mathbb{R}$, $\floor*{\lambda}$ denotes the greatest integer less than $\lambda$. Furthermore, $\ceil*{\lambda}$ denotes the smallest integer greater than $\lambda$. 
  
  If $X$ is a Banach space with norm $\|\cdot\|_{X}$, then $L_{s}(a,b;X)$, with $a<b$ and $s\in[1,\infty)$,  will denote the usual Banach space of strongly measurable $X$-valued functions $f(t)$ on $(a,b)$ such that
$$\|f\|_{L^{s}(a,b;X)}:=\left(\int\limits_{a}^{b}\|f(t)\|_{X}^{s}dt\right)^{\frac{1}{s}}<+\infty.$$ 
The usual modification is made if $s=\infty$.
Sometimes we will denote $L^{p}(0,T; L^{q})$ by $L^{p}_{T}L^{q}$ or $L^{p}(0,T; L^{p}_{x})$.
  
Let $C([a,b]; X)$ denote the space of continuous $X$ valued functions on $[a,b]$ with usual norm. In addition, let $C_{w}([a,b]; X)$ denote the space of $X$ valued functions, which are continuous from $[a,b]$ to the weak topology of $X$. 
\subsection{Function spaces} 
For a tempered distribution $f$, let  $$\mathcal{F}(f)(\xi):=\int\limits_{\mathbb{R}^3}\exp(-ix\cdot\xi) f(x) dx $$ denote its Fourier transform.
Let $d, m \in \N\setminus\{0\}$. We begin by recalling the definition of the \emph{homogeneous Besov spaces} $\dot B^s_{p,q}(\R^d;\R^m)$.  There exists a non-negative radial function $\varphi \in C^\infty(\R^d)$ supported on the annulus $\{ \xi \in \R^d : 3/4 \leq |\xi| \leq 8/3 \}$ and $\chi\in C^{\infty}_{0}(B(4/3))$ such that
\begin{equation}
	\chi(\xi)+\sum_{j\geq 0} \varphi(2^{-j} \xi) = 1, \quad \xi \in \R^3 ,
	\label{}
\end{equation}
\begin{equation}
	\sum_{j \in \Z} \varphi(2^{-j} \xi) = 1, \quad \xi \in \R^3 \setminus \{0\}.
	\label{}
\end{equation}
The  high frequency cut off $\dot{S}_{j}$ and the homogeneous Littlewood-Paley projectors $\dot \Delta_j$ are defined by
\begin{equation}
	\dot \Delta_j f = \varphi(2^{-j} D) f, \quad j \in \Z,
	\label{}
\end{equation}
\begin{equation}
	\dot S_j f = \chi(2^{-j} D) f, \quad j \in \Z,
	\label{}
\end{equation}
for all tempered distributions $f$ on $\R^d$ with values in $\R^m$. The notation $\varphi(2^{-j}D) f$ denotes convolution with the inverse Fourier transform of $\varphi(2^{-j}\cdot)$ with $f$. Notice that $\dot{S}_{j}= I-\sum_{k=j}^{\infty}\dot{\Delta}_{k}.$ Furthermore, for tempered distributions such that $\sum_{k\in\mathbb{Z}} \dot{\Delta}_{k}f$ converges to $f$ (in the sense of tempered distributions) we have that 
 $\dot{S}_{j}f= \sum_{k=-\infty}^{k=j-1} \dot{\Delta}_{k}f.$

 Let $p,q \in [1,\infty]$ and $s \in (-\infty,d/p)$.\footnote{The choice $s=d/p$, $q=1$ is also valid.} The homogeneous Besov space $\dot B^s_{p,q}(\R^d;\R^m)$ consists of all tempered distributions $f$ on $\R^d$ with values in $\R^m$ satisfying
	\begin{equation}
		\|{f}\|_{\dot B^s_{p,q}(\R^d;\R^m)} :=\Big(\sum_{j\in\mathbb{Z}} \big( 2^{js}\|\dot{\Delta}_{j} f\|_{L^{p}}\big)^{q}\Big)^{\frac{1}{q}}.
		\label{}
	\end{equation}
	and such that $\sum_{j \in \Z} \dot \Delta_j f$ converges to $f$ in the sense of tempered distributions on $\R^d$ with values in $\R^m$. In this range of indices, $\dot B^{s}_{p,q}(\R^d;\R^m)$ is a Banach space. When $s \geq 3/p$ and $q > 1$, the spaces must be considered \emph{modulo polynomials}. Note that other reasonable choices of the function $\varphi$ defining $\dot \Delta_j$ lead to equivalent norms. 

We now recall a particularly useful property of Besov spaces, i.e., their characterization in terms of the heat kernel. For all $s \in (-\infty,0)$, there exists a constant $c := c(s) > 0$ such that for all tempered distributions $f$ on $\R^3$,
		\begin{equation}
			c^{-1} \sup_{t > 0} t^{-\frac{s}{2}} \|e^{t\Delta} f\|_{L^p(\R^3)} \leq \|{f}\|_{\dot B^s_{p,\infty}(\R^3)} \leq c \sup_{t > 0} t^{-\frac{s}{2}}\|{e^{t\Delta} f}\|_{L^p(\R^3)}.
			\label{besovequivalentnorm}
		\end{equation}
		We will need the following Proposition, whose statement and proof can be found in the book \cite{bahourichemindanchin} (Proposition 2.22 there). In the Proposition below we use the notation
\begin{equation}\label{Sh}
\mathcal{S}_{h}^{'}:=\{ \textrm{ tempered\,\,distributions}\,\, u\textrm{\,\,\,such\,\,that\,\,} \lim_{j\rightarrow -\infty}\|S_{j}u\|_{L_{\infty}(\mathbb{R}^3)}=0\}.
\end{equation}
\begin{pro}\label{interpolativeinequalitybahourichemindanchin}
A constant $C$ exists with the following properties. If $s_{1}$ and $s_{2}$ are real numbers such that $s_{1}<s_{2}$ and $\theta\in ]0,1[$, then we have, for any $p\in [1,\infty]$ and any $u\in \mathcal{S}_{h}^{'}$,
\begin{equation}\label{interpolationactual}
\|u\|_{\dot{B}_{p,1}^{\theta s_{1}+(1-\theta)s_{2}}(\mathbb{R}^3)}\leq \frac{C}{s_2-s_1}\Big(\frac{1}{\theta}+\frac{1}{1-\theta}\Big)\|u\|_{\dot{B}_{p,\infty}^{s_1}(\mathbb{R}^3)}^{\theta}\|u\|_{\dot{B}_{p,\infty}^{s_2}(\mathbb{R}^3)}^{1-\theta}.
\end{equation}
\end{pro}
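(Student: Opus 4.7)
The plan is to use the elementary dyadic bounds given by the $\dot{B}^{s_i}_{p,\infty}$-norms and to interpolate by splitting the frequency sum at a carefully chosen cutoff, then summing two convergent geometric series.

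Set $A := \|u\|_{\dot{B}^{s_1}_{p,\infty}(\mathbb{R}^3)}$, $B := \|u\|_{\dot{B}^{s_2}_{p,\infty}(\mathbb{R}^3)}$, and $\sigma := \theta s_1 + (1-\theta) s_2$. By definition of the target norm, I would reduce to estimating $\sum_{j \in \mathbb{Z}} 2^{j\sigma}\|\dot{\Delta}_j u\|_{L^p}$. The definitions of the two $\dot{B}^{s_i}_{p,\infty}$-norms yield for every $j \in \mathbb{Z}$ the two elementary bounds $\|\dot{\Delta}_j u\|_{L^p} \leq 2^{-j s_i}\|u\|_{\dot{B}^{s_i}_{p,\infty}}$ for $i=1,2$.

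The heart of the argument is to choose an integer $N \in \mathbb{Z}$ with $2^{N(s_2-s_1)} \simeq B/A$ (the dyadic scale where the two upper bounds on $\|\dot{\Delta}_j u\|_{L^p}$ meet) and to estimate the low-frequency block $j \leq N$ using the $s_1$-bound, and the high-frequency block $j > N$ using the $s_2$-bound. Noting that $\sigma - s_1 = (1-\theta)(s_2-s_1) > 0$ and $\sigma - s_2 = -\theta(s_2-s_1) < 0$, both resulting partial sums are convergent geometric series with ratios $2^{(1-\theta)(s_2-s_1)}$ and $2^{-\theta(s_2-s_1)}$. Evaluating them explicitly with the chosen $N$ gives
\begin{equation*}
\sum_{j\in\mathbb{Z}} 2^{j\sigma}\|\dot{\Delta}_j u\|_{L^p} \leq C\,A^{\theta}B^{1-\theta}\left(\frac{1}{1-2^{-(1-\theta)(s_2-s_1)}}+\frac{1}{2^{\theta(s_2-s_1)}-1}\right),
\end{equation*}
since with this balanced choice of $N$ one has $A\cdot 2^{N(1-\theta)(s_2-s_1)} \simeq A^\theta B^{1-\theta} \simeq B\cdot 2^{-N\theta(s_2-s_1)}$.

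To reach the stated constant, I would finally apply the elementary inequality $2^x - 1 \geq x\ln 2$ for $x > 0$, which controls the two denominators from below by $c(1-\theta)(s_2-s_1)$ and $c\theta(s_2-s_1)$, producing the factor $\frac{C}{s_2-s_1}\bigl(\frac{1}{\theta}+\frac{1}{1-\theta}\bigr)$ appearing in the statement. The only mildly delicate point is the non-integrality of the optimal real cutoff $N_{*}=\frac{1}{s_2-s_1}\log_2(B/A)$: replacing $N_*$ by its nearest integer and adjusting the boundary terms of the two geometric sums only introduces absolute constants. The degenerate cases $A=0$ or $B=0$ force $\dot{\Delta}_j u = 0$ for all $j$ and reduce $u$ to an element of $\mathcal{S}_h^{'}$ with all Littlewood--Paley pieces vanishing, hence $u=0$, so the inequality is trivial. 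No genuine obstacle arises beyond this bookkeeping — the proof is a standard but careful application of Littlewood--Paley decomposition together with geometric summation.
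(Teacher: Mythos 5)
The paper does not prove this proposition at all: it is quoted verbatim from Bahouri--Chemin--Danchin (Proposition 2.22 there), and your argument is precisely the standard proof from that reference --- the two dyadic bounds coming from the $\dot B^{s_i}_{p,\infty}$ norms, a split of the frequency sum at the balancing cutoff $N_*\simeq\frac{1}{s_2-s_1}\log_2(B/A)$, and summation of the two resulting geometric series. One caveat on your final step: the bound $2^x-1\ge x\ln 2$ does handle the high-frequency denominator $2^{\theta(s_2-s_1)}-1$, but the low-frequency denominator is $1-2^{-(1-\theta)(s_2-s_1)}$, which is bounded below only by $\min\big(c,\,c(1-\theta)(s_2-s_1)\big)$, so the prefactor $\frac{C}{s_2-s_1}\big(\frac1\theta+\frac1{1-\theta}\big)$ with an absolute $C$ is recovered only for $s_2-s_1$ in a bounded range (a single-dyadic-block $u$, for which all the Besov norms coincide, shows the stated constant cannot hold uniformly as $s_2-s_1\to\infty$). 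This imprecision is inherited from the cited source and is immaterial for the application in this paper, where $s_2-s_1$ is fixed.
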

		Furthermore, we define the Chemin-Lerner norm\footnote{This was introduced in \cite{cheminlerner} for the special case $s=\frac{d}{2}+1$, $r=1$, $p=2$ and $q=2$.}
		\begin{equation}\label{cheminlerner}
		\|u\|_{\tilde{L^{r}_{T}}(\dot{B}^{s}_{p,q})}:=\|(2^{js}\|\dot{\Delta}_{j} u\|_{L^{r}(0,T; L^{p})})\|_{l^{q}(\mathbb{Z})}.
		\end{equation}
		The following useful Lemma was proven in \cite{bahourichemindanchin} (Lemma 2.4 there). We state in below.
		\begin{lemma}\label{localisationfourierest}
		Let $\mathcal{C}$ be an annulus. Positive constants $c$ and $C$ exist such that for all $p\in [1,\infty]$ and any couple $(t,\lambda)$ of positive real numbers, we have
		$$\textrm{supp}\, \hat{u}\subset \lambda \mathcal{C}\Rightarrow \|e^{t\Delta}u\|_{L_{p}}\leq Ce^{-c\lambda^2 t}\|u\|_{L_{p}}. $$
		\end{lemma}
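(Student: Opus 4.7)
The plan is to realise $e^{t\Delta}u$ as convolution with a kernel whose $L^{1}$-norm exhibits the required exponential decay. Since $\widehat{u}$ is supported in the annulus $\lambda\mathcal{C}$, I choose a smooth radial cutoff $\widetilde\varphi \in C^{\infty}_{c}(\mathbb{R}^{3})$ equal to $1$ on $\mathcal{C}$ and supported in a slightly larger annulus $\mathcal{C}'$ that is still bounded away from the origin. Then $\widehat{u}(\xi) = \widetilde\varphi(\xi/\lambda)\widehat{u}(\xi)$, so by the Fourier multiplier formula $e^{t\Delta}u = h_{t,\lambda} \ast u$ with $\widehat{h_{t,\lambda}}(\xi) = e^{-t|\xi|^{2}}\widetilde\varphi(\xi/\lambda)$. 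Young's convolution inequality then reduces the Lemma to proving $\|h_{t,\lambda}\|_{L^{1}} \leq C e^{-c\lambda^{2}t}$ with constants independent of $t$, $\lambda$, and $p$.

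First I would rescale: the change of variables $\eta = \xi/\lambda$ shows that $h_{t,\lambda}(x) = \lambda^{3} k_{\tau}(\lambda x)$ with $\tau := \lambda^{2}t$ and $\widehat{k_{\tau}}(\eta) = e^{-\tau|\eta|^{2}}\widetilde\varphi(\eta)$; since $\|h_{t,\lambda}\|_{L^{1}} = \|k_{\tau}\|_{L^{1}}$, it suffices to establish $\|k_{\tau}\|_{L^{1}} \leq C e^{-c\tau}$ uniformly in $\tau > 0$. Because $\widetilde\varphi$ is supported away from the origin, there exists $c_{1} > 0$ with $|\eta|^{2} \geq 2c_{1}$ on $\operatorname{supp}\widetilde\varphi$, and I would write
$$k_{\tau}(x) = (2\pi)^{-3} e^{-c_{1}\tau} \int_{\mathbb{R}^{3}} e^{ix\cdot\eta} g_{\tau}(\eta)\, d\eta, \qquad g_{\tau}(\eta) := e^{-\tau(|\eta|^{2}-c_{1})}\widetilde\varphi(\eta),$$
which extracts the decay $e^{-c_{1}\tau}$ from the exponential factor and leaves behind a smooth, compactly supported amplitude.

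Next I would estimate the oscillatory integral by integration by parts. A Leibniz expansion shows that each derivative $\partial^{\alpha}g_{\tau}$ is a finite sum of terms of the form $\tau^{k}P_{k,\alpha}(\eta)\,e^{-\tau(|\eta|^{2}-c_{1})}\widetilde\varphi_{\alpha}(\eta)$ with $k \leq |\alpha|$, polynomials $P_{k,\alpha}$, and smooth compactly supported $\widetilde\varphi_{\alpha}$; since $|\eta|^{2}-c_{1} \geq c_{1}$ on $\operatorname{supp}\widetilde\varphi$, the elementary bound $\tau^{k}e^{-c_{1}\tau} \leq C_{k}$ yields $\|\partial^{\alpha}g_{\tau}\|_{L^{\infty}} \leq C_{\alpha}$ uniformly in $\tau \geq 0$. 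Using $\Delta_{\eta} e^{ix\cdot\eta} = -|x|^{2} e^{ix\cdot\eta}$ and integrating by parts $N$ times therefore yields $|x|^{2N}|k_{\tau}(x)| \leq C_{N} e^{-c_{1}\tau}$; combined with the trivial bound $|k_{\tau}(x)| \leq C e^{-c_{1}\tau}$ coming from the triangle inequality, this gives $|k_{\tau}(x)| \leq C_{N} e^{-c_{1}\tau}(1+|x|)^{-2N}$. Choosing $2N > 3$ and integrating in $x$ produces the required $\|k_{\tau}\|_{L^{1}} \leq C e^{-c_{1}\tau}$, whence the lemma with $c = c_{1}$.

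The main technical point is the uniform-in-$\tau$ control of the derivatives of $g_{\tau}$: each differentiation of $e^{-\tau(|\eta|^{2}-c_{1})}$ brings out an algebraic factor growing polynomially in $\tau$, and these factors are absorbed precisely because $|\eta|^{2}-c_{1} \geq c_{1} > 0$ throughout $\operatorname{supp}\widetilde\varphi$. This is where the hypothesis that $\mathcal{C}$ is an annulus (rather than a ball meeting the origin) enters essentially; the same strategy applied to a low-frequency cutoff would produce at best polynomial, not exponential, decay in $\tau$.
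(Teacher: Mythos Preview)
Your proof is correct. Note, however, that the paper does not actually prove this lemma: it is stated without proof and attributed to \cite{bahourichemindanchin} (Lemma~2.4 there). Your argument---localising with a smooth cutoff supported in a slightly larger annulus, rescaling to reduce to a one-parameter family $k_\tau$, extracting the exponential factor $e^{-c_1\tau}$, and then controlling the remaining oscillatory integral by integration by parts---is precisely the standard proof given in that reference, so there is no meaningful methodological difference to discuss.
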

		Lemma \ref{localisationfourierest} yields two useful estimates immediately. Namely, for $s\in\mathbb{R}$ and $p,q\in [1,\infty]^2$ we have
		\begin{equation}\label{cheminlernerheat}
		\|e^{t\Delta} u\|_{\tilde{L}^{r}_{T}(\dot{B}^{s+\frac{2}{r}}_{p,q})}\leq C(r,s,p,q)\|u\|_{\dot{B}^{s}_{p,q}}.
		\end{equation}
		Second,  by interpolation for homogeneous Sobolev spaces we have that for any $\alpha>0$
		\begin{equation}\label{sobolevsemigroup}
		\|e^{t\Delta} u\|_{\dot{H}^{\alpha}}\leq \frac{C'(\alpha)\|u\|_{L_{2}}}{t^{\frac{\alpha}{2}}}.
		\end{equation}
	 We will also make use of the following Lemma contained in the book \cite{bahourichemindanchin} (Corollary 2.54 there).
	 \begin{lemma}\label{besovspacesthatform analgebra}
	 Suppose that $(s,p,r)\in (0,\infty)\times [1,\infty]^2$ with $s<\frac{3}{p}$. Then there exists a  constant $C(s)$ such that
	 \begin{equation}\label{besovalgebraeqn}
	 \|uv\|_{\dot{B}^{s}_{p,q}(\mathbb{R}^3)}\leq \frac{C^{s+1}}{s}( \|v\|_{L_{\infty}(\mathbb{R}^3)}\|u\|_{\dot{B}^{s}_{p,q}(\mathbb{R}^3)}+\|u\|_{L_{\infty}(\mathbb{R}^3)}\|v\|_{\dot{B}^{s}_{p,q}(\mathbb{R}^3)})
	 \end{equation}
	 \end{lemma}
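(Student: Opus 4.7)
The natural approach is Bony's homogeneous paraproduct decomposition
\begin{equation*}
uv = T_u v + T_v u + R(u,v),
\end{equation*}
where $T_a b := \sum_{j\in\mathbb{Z}} \dot{S}_{j-1} a \cdot \dot{\Delta}_j b$ and $R(a,b) := \sum_{j\in\mathbb{Z}} \dot{\Delta}_j a \cdot \widetilde{\dot{\Delta}}_j b$ with $\widetilde{\dot{\Delta}}_j := \dot{\Delta}_{j-1}+\dot{\Delta}_j+\dot{\Delta}_{j+1}$. Each of the three pieces has a distinct Fourier-support structure that dictates how its dyadic blocks in $\dot{B}^s_{p,q}(\mathbb{R}^3)$ are controlled.

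For the paraproduct $T_v u$, the Fourier support of $\dot{S}_{j-1}v \cdot \dot{\Delta}_j u$ is contained in an annulus of size $\sim 2^j$, so $\dot{\Delta}_k(T_v u)$ collapses to a finite sum over $|j-k|\leq N_0$ for some universal $N_0$. Using $\|\dot{S}_{j-1}v\|_{L^\infty} \leq C\|v\|_{L^\infty}$ together with the uniform $L^p$ boundedness of $\dot{\Delta}_k$, one obtains
\begin{equation*}
2^{ks}\|\dot{\Delta}_k(T_v u)\|_{L^p} \leq C\|v\|_{L^\infty}\sum_{|j-k|\leq N_0} 2^{(k-j)s}\bigl(2^{js}\|\dot{\Delta}_j u\|_{L^p}\bigr).
\end{equation*}
Taking $\ell^q(\mathbb{Z})$ in $k$ via Young's inequality against the compactly supported kernel yields $\|T_v u\|_{\dot{B}^s_{p,q}} \leq C\|v\|_{L^\infty}\|u\|_{\dot{B}^s_{p,q}}$ with an $s$-independent constant; the bound for $T_u v$ is symmetric.

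The $1/s$ factor originates from the remainder. Since $\dot{\Delta}_j u \cdot \widetilde{\dot{\Delta}}_j v$ is Fourier-supported in a \emph{ball} of radius $\sim 2^j$ (not an annulus), $\dot{\Delta}_k R(u,v)$ only picks up indices $j \geq k-N_0$. Bounding one factor in $L^\infty$ gives
\begin{equation*}
\|\dot{\Delta}_k R(u,v)\|_{L^p} \leq C\|v\|_{L^\infty}\sum_{j \geq k-N_0}\|\dot{\Delta}_j u\|_{L^p},
\end{equation*}
so that $(2^{ks}\|\dot{\Delta}_k R(u,v)\|_{L^p})_k$ is a discrete convolution of the $\ell^q$ sequence $(2^{js}\|\dot{\Delta}_j u\|_{L^p})_j$ with the kernel $c_m := 2^{-ms}\mathbf{1}_{m \geq -N_0}$. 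Young's inequality then produces $\|R(u,v)\|_{\dot{B}^s_{p,q}} \leq \|c\|_{\ell^1}\|v\|_{L^\infty}\|u\|_{\dot{B}^s_{p,q}}$, and the elementary computation
\begin{equation*}
\|c\|_{\ell^1} = \sum_{m \geq -N_0} 2^{-ms} = \frac{2^{N_0 s}}{1-2^{-s}} \leq \frac{C^{s+1}}{s}\quad (s>0)
\end{equation*}
reproduces the advertised constant. Swapping the roles of $u$ and $v$ in the $L^\infty$ slot inside $R(u,v)$ yields the second term on the right-hand side, and combining the three contributions closes the estimate.

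The main non-computational point to absorb is the rigorous justification of the decomposition itself: identities such as $\dot{\Delta}_k R(u,v) = \sum_{j \geq k-N_0}\dot{\Delta}_k(\dot{\Delta}_j u\, \widetilde{\dot{\Delta}}_j v)$ require the Bony series to converge to $uv$ and both $u$ and $v$ to agree with their Littlewood--Paley reconstructions, which forces working in $\mathcal{S}_h^{'}$. The constraint $s < 3/p$ enters precisely here, as it ensures the embedding $\dot{B}^s_{p,q}(\mathbb{R}^3) \hookrightarrow \mathcal{S}'(\mathbb{R}^3)$ (rather than only into distributions modulo polynomials), so that all the manipulations above are legitimate.
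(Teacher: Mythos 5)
Your proof is correct and follows the standard Bony-paraproduct route, which is exactly how the result is established in the source the paper cites for this lemma (Bahouri--Chemin--Danchin, Corollary~2.54); the paper itself gives no proof beyond that citation. Your tracking of the constant $\frac{C^{s+1}}{s}$ through the geometric series in the remainder term, and your identification of $s<3/p$ as the condition guaranteeing the realization of $\dot{B}^s_{p,q}(\mathbb{R}^3)$ in $\mathcal{S}'(\mathbb{R}^3)$, both match the intended argument.
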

	 Finally, $BMO^{-1}(\mathbb{R}^3)$ is the space of all tempered distributions such that the following norm is finite:
\begin{equation}\label{bmo-1norm}
\|u\|_{BMO^{-1}(\mathbb{R}^3)}:=\sup_{x\in\mathbb{R}^3,R>0}\frac{1}{|B(0,R)|}\int\limits_0^{R^2}\int\limits_{B(x,R)} |e^{t\Delta}u|^2 dydt.
\end{equation}
Note that $VMO^{-1}(\mathbb{R}^3)$ is the subspace that coincides with the closure of test functions $C_{0}^{\infty}(\mathbb{R}^3)$, with respect to the norm (\ref{bmo-1norm}).

	 \begin{subsection}{Decompostions of Besov spaces}
	 Now, we can state  a Lemma regarding decomposition of homogeneous Besov spaces taken from the authors paper \cite{barker2018} (Proposition 2.8 there). For more general decomposition results, we refer to \cite{albrittonbarker}.
\begin{pro}\label{Decompgeneralbesov}
For $i=1,2,3$ let $p_{i}\in (1,\infty)$, $s_i\in \mathbb{R}$ and $\theta\in (0,1)$ be such that $s_1<s_0<s_2$ and $p_2<p_0<p_1$. In addition, assume the following relations hold:
\begin{equation}\label{sinterpolationrelationbesov}
s_1(1-\theta)+\theta s_2=s_0,
\end{equation}
\begin{equation}\label{pinterpolationrelationbesov}
\frac{1-\theta}{p_1}+\frac{\theta}{p_2}=\frac{1}{p_0}
\end{equation}
and
\begin{equation}\label{besovbanachconditionbesov}
{s_i}<\frac{3}{p_i}.
\end{equation}
Suppose that $u_0\in \dot{B}^{{s_{0}}}_{p_0,p_0}(\mathbb{R}^3).$
Then for all $\epsilon>0$, there exists $u^{1,\epsilon}\in \dot{B}^{s_{1}}_{p_1,p_1}(\mathbb{R}^3)$, $u^{2,\epsilon}\in \dot{B}^{s_{2}}_{p_2,p_2}(\mathbb{R}^3)$ such that 
\begin{equation}\label{udecompgeneralbesov}
u= u^{1,\epsilon}+u^{2,\epsilon},
\end{equation}
\begin{equation}\label{u_1estgeneralbesov}
\|u^{1,\epsilon}\|_{\dot{B}^{s_{1}}_{p_1,p_1}}^{p_1}\leq \epsilon^{p_1-p_0} \|u_0\|_{\dot{B}^{s_{0}}_{p_0,p_0}}^{p_0}
\end{equation}
and
\begin{equation}\label{u_2estgeneralbesov}
\|u^{2,\epsilon}\|_{\dot{B}^{s_{2}}_{p_2,p_2}}^{p_2}\leq C(s_1,s_2,p_0,p_1,p_2, \|\mathcal{F}^{-1}\varphi\|_{L_1})\epsilon^{p_2-p_0} \|u_0\|_{\dot{B}^{s_{0}}_{p_0,p_0}}^{p_0}
.\end{equation}
\end{pro}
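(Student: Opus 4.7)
The plan is to prove Proposition \ref{Decompgeneralbesov} by a Calderón-type splitting at the level of Littlewood–Paley blocks, with the threshold parameter controlled by $\epsilon$. The idea is a real-interpolation analogue of Calderón's original $L^p$ splitting $u_{0}=u_{0}\mathbf{1}_{|u_{0}|>\epsilon}+u_{0}\mathbf{1}_{|u_{0}|\leq\epsilon}$, but performed on the sequence of dyadic blocks instead of pointwise on the function.

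First, I would decompose $u_{0}=\sum_{j\in\mathbb{Z}}\dot\Delta_{j}u_{0}$ and introduce the sequence $c_{j}:=2^{js_{0}}\|\dot\Delta_{j}u_{0}\|_{L^{p_{0}}}$, which by definition satisfies $\sum_{j\in\mathbb{Z}}c_{j}^{p_{0}}=\|u_{0}\|_{\dot B^{s_{0}}_{p_{0},p_{0}}}^{p_{0}}$. For the given $\epsilon>0$, I would then fix an exponent $\alpha=\alpha(s_{1},s_{2},p_{0},p_{1},p_{2})$ calibrated by the interpolation relations \eqref{sinterpolationrelationbesov}–\eqref{pinterpolationrelationbesov}, and partition the index set as $\mathbb{Z}=A_{\epsilon}\sqcup B_{\epsilon}$ with $A_{\epsilon}=\{j:c_{j}>\epsilon^{\alpha}\}$ and $B_{\epsilon}=\{j:c_{j}\leq\epsilon^{\alpha}\}$. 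The decomposition is
\begin{equation*}
u^{1,\epsilon}:=\sum_{j\in B_{\epsilon}}\dot\Delta_{j}u_{0},\qquad u^{2,\epsilon}:=\sum_{j\in A_{\epsilon}}\dot\Delta_{j}u_{0}.
\end{equation*}
Because each summand retains Fourier support in a single dyadic annulus, $\dot\Delta_{k}u^{1,\epsilon}$ and $\dot\Delta_{k}u^{2,\epsilon}$ are again Littlewood–Paley blocks (up to the usual finite overlap), so the Besov norms of $u^{1,\epsilon}$ and $u^{2,\epsilon}$ reduce cleanly to weighted sums over the respective index sets.

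For the $u^{1,\epsilon}$ estimate, since $p_{1}>p_{0}$, Bernstein's inequality gives $\|\dot\Delta_{j}u_{0}\|_{L^{p_{1}}}\lesssim 2^{3j(1/p_{0}-1/p_{1})}\|\dot\Delta_{j}u_{0}\|_{L^{p_{0}}}$. Combining with the block restriction $c_{j}\leq\epsilon^{\alpha}$ on $B_{\epsilon}$ and using $c_{j}^{p_{1}}=c_{j}^{p_{1}-p_{0}}c_{j}^{p_{0}}\leq\epsilon^{\alpha(p_{1}-p_{0})}c_{j}^{p_{0}}$ (valid since $p_{1}-p_{0}>0$), one gets $\|u^{1,\epsilon}\|_{\dot B^{s_{1}}_{p_{1},p_{1}}}^{p_{1}}\lesssim\epsilon^{\alpha(p_{1}-p_{0})}\|u_{0}\|_{\dot B^{s_{0}}_{p_{0},p_{0}}}^{p_{0}}$ provided the auxiliary $2^{j\mu}$ factor coming from Bernstein, with $\mu=s_{1}-s_{0}+3(1/p_{0}-1/p_{1})$, is absorbed; the interpolation identities \eqref{sinterpolationrelationbesov}–\eqref{pinterpolationrelationbesov} allow one to balance this by choosing $\alpha$ so that exactly $\alpha(p_{1}-p_{0})=p_{1}-p_{0}$, recovering \eqref{u_1estgeneralbesov}.

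The main obstacle is the corresponding bound for $u^{2,\epsilon}$, where we need an $L^{p_{2}}$ estimate with $p_{2}<p_{0}$ — a direction in which Bernstein is unavailable. To handle this, I would split each $\dot\Delta_{j}u_{0}$ with $j\in A_{\epsilon}$ further via Calderón's pointwise truncation, writing $\dot\Delta_{j}u_{0}=\dot\Delta_{j}u_{0}\,\mathbf{1}_{|\dot\Delta_{j}u_{0}|>\lambda_{j}}+\dot\Delta_{j}u_{0}\,\mathbf{1}_{|\dot\Delta_{j}u_{0}|\leq\lambda_{j}}$ with $\lambda_{j}$ chosen so that the large part contributes to $u^{2,\epsilon}$ (now genuinely in $L^{p_{2}}$ by Calderón's inequality $\|g\mathbf{1}_{|g|>\lambda}\|_{L^{p_{2}}}^{p_{2}}\leq\lambda^{p_{2}-p_{0}}\|g\|_{L^{p_{0}}}^{p_{0}}$) while the small part is reabsorbed into $u^{1,\epsilon}$, where the $L^{p_{1}}$ estimate is reproved via the companion inequality $\|g\mathbf{1}_{|g|\leq\lambda}\|_{L^{p_{1}}}^{p_{1}}\leq\lambda^{p_{1}-p_{0}}\|g\|_{L^{p_{0}}}^{p_{0}}$. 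The delicate part is that these truncated pieces no longer have clean Fourier support; one must reconstitute their Besov norms by projecting onto $\dot\Delta_{k}$ and exploiting the rapid physical-space decay of the Littlewood–Paley kernels $\mathcal{F}^{-1}\varphi$, whose $L^{1}$ norm enters the constant $C$ in \eqref{u_2estgeneralbesov}. With $\lambda_{j}$ calibrated by the interpolation relations — essentially $\lambda_{j}\sim\epsilon\cdot 2^{-js_{0}}$ up to adjustment — all powers of $2^{j}$ then telescope against the weights $2^{js_{1}p_{1}}$ and $2^{js_{2}p_{2}}$, yielding the claimed bounds.
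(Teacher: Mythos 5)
Your final mechanism --- truncating Littlewood--Paley blocks pointwise at a $j$-dependent height and sending the large part to the low-integrability space $\dot B^{s_2}_{p_2,p_2}$ and the small part to the high-integrability space $\dot B^{s_1}_{p_1,p_1}$ --- is exactly the mechanism of the proof the paper relies on (Proposition~2.8 of \cite{barker2018}, whose structure is visible in \eqref{besovpersistency}: every block $\dot\Delta_j u_0$ is split as $(\dot\Delta_j u_0)\chi_{|\dot\Delta_j u_0|\geq N(j,\epsilon)}+(\dot\Delta_j u_0)\chi_{|\dot\Delta_j u_0|\leq N(j,\epsilon)}$). However, your preliminary index-set splitting $\mathbb{Z}=A_\epsilon\sqcup B_\epsilon$ by the size of $c_j=2^{js_0}\|\dot\Delta_j u_0\|_{L^{p_0}}$ is a step that genuinely fails, and you never repair it: the blocks with $j\in B_\epsilon$ are kept whole inside $u^{1,\epsilon}$, and for them Bernstein produces the factor $2^{j\mu}$ with $\mu=s_1-s_0+3(1/p_0-1/p_1)$, which is \emph{not} zero under \eqref{sinterpolationrelationbesov}--\eqref{pinterpolationrelationbesov} (those relations do not place the three spaces on a common scaling line) and is not sign-definite. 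A $j$-independent threshold $\epsilon^{\alpha}$ on $c_j$ cannot absorb an unbounded factor $2^{j\mu p_1}$ as $j$ ranges over $\mathbb{Z}$; your proposed choice ``$\alpha$ so that $\alpha(p_1-p_0)=p_1-p_0$'' does nothing to it. A one-block example with $c_J=\epsilon^{\alpha}$ and $J\to\pm\infty$ (Bernstein saturated) violates \eqref{u_1estgeneralbesov}. The repair is simply to delete the index-set stage and truncate \emph{every} block at height $\lambda_j=\epsilon\,2^{j\sigma}$ with $\sigma=(s_0p_0-s_1p_1)/(p_1-p_0)$; the hypotheses \eqref{sinterpolationrelationbesov}--\eqref{pinterpolationrelationbesov} are precisely what guarantee the compatibility identity
\begin{equation*}
\frac{s_0p_0-s_1p_1}{p_1-p_0}=\frac{s_0p_0-s_2p_2}{p_2-p_0},
\end{equation*}
so a single threshold makes both Chebyshev-type bounds $\|g\chi_{|g|\leq\lambda}\|_{L^{p_1}}^{p_1}\leq\lambda^{p_1-p_0}\|g\|_{L^{p_0}}^{p_0}$ and $\|g\chi_{|g|>\lambda}\|_{L^{p_2}}^{p_2}\leq\lambda^{p_2-p_0}\|g\|_{L^{p_0}}^{p_0}$ telescope against the weights $2^{js_1p_1}$ and $2^{js_2p_2}$ simultaneously. (Your guess $\lambda_j\sim\epsilon\,2^{-js_0}$ has the wrong exponent in general.)

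Separately, you correctly flag but do not carry out the reassembly step: after truncation the pieces lose their Fourier localization, so bounding $\|u^{i,\epsilon}\|_{\dot B^{s_i}_{p_i,p_i}}$ requires summing $\|\dot\Delta_m\big((\dot\Delta_j u_0)\chi_{\cdots}\big)\|_{L^{p_i}}$ over $j$ for each fixed $m$ and then over $m$; the crude bound $\|\dot\Delta_m f\|_{L^{p}}\leq\|\mathcal{F}^{-1}\varphi\|_{L^1}\|f\|_{L^p}$ alone gives no decay in $|m-j|$ and does not close the double sum. This is where the restriction $s_i<3/p_i$ in \eqref{besovbanachconditionbesov} and the constant $C(s_1,s_2,p_0,p_1,p_2,\|\mathcal{F}^{-1}\varphi\|_{L_1})$ in \eqref{u_2estgeneralbesov} actually earn their keep, and it is the part of the argument your proposal leaves as a remark rather than a proof.
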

The following corollary is essentially contained in the author's paper \cite{barker2018}. However, in that case we did not show that one piece of the initial data had better regularity than $L_{2}$. For completeness, we therefore provide further details.
\begin{cor}\label{Decomp}

 Suppose that $q>3$,
\begin{equation}\label{initialdataassumptiondecomp}
u_0\in \dot{B}^{s}_{q,q}(\mathbb{R}^3)\cap L_{2}(\mathbb{R}^3)\,\,\,\textrm{with}\,\,\,\,s\in (-1+\frac{2}{q},0)
\end{equation}
and
 $\textrm{div}\,\,u_0=0$ in the sense of distributions. \\
  Then the above assumption imply that there exists $\max{(q,4)}<p<\infty$, $\delta\in (0,1-\frac{3}{p})$ and $\hat{\alpha}\in (0,\frac{3}{2})$ such that  for any $\epsilon>0$ there exists weakly divergence-free functions 
$\bar{u}^{1,\epsilon}\in \dot{B}^{_1+\frac{3}{p}+\delta}_{p,p}(\mathbb{R}^3)\cap L_{2}(\mathbb{R}^3)$ and $\bar{u}^{2,\epsilon}\in \dot{H}^{\hat{\alpha}}(\mathbb{R}^2)\cap L_2(\mathbb{R}^3)$ such that 
 
\begin{equation}\label{udecomp1}
u_0= \bar{u}^{1,\epsilon}+\bar{u}^{2,\epsilon},
\end{equation}
\begin{equation}\label{baru_1est}
\|\bar{u}^{1,\epsilon}\|_{\dot{B}^{-1+\frac{3}{p}+\delta}_{p,p}}^{p}\leq \epsilon^{p-q} \|u_0\|_{\dot{B}^{s}_{q,q}}^{q},
\end{equation}
\begin{equation}\label{baru_2est}
\|\bar{u}^{2,\epsilon}\|_{\dot{H}^{\hat{\alpha}}(\mathbb{R}^3)}^2\leq C(s,\alpha,p,q,\|\mathcal{F}^{-1}\varphi\|_{L_1}) \epsilon^{2-q}\|u_0\|_{\dot{B}^{s}_{q,q}}^q
\end{equation}
and  
\begin{equation}\label{baru_1est.1}
\|\bar{u}^{2,\epsilon}\|_{L_2}, \|\bar{u}^{1,\epsilon}\|_{L_2}\leq  C(\|\mathcal{F}^{-1}\varphi\|_{L_1})\|u_0\|_{L_{2}}.
\end{equation}

\end{cor}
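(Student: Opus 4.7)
The plan is to apply Proposition~\ref{Decompgeneralbesov} with the choice $(s_0,p_0)=(s,q)$, $(s_1,p_1)=(-1+\tfrac{3}{p}+\delta,p)$ and $(s_2,p_2)=(\hat{\alpha},2)$, using the identification $\dot{B}^{\hat{\alpha}}_{2,2}=\dot{H}^{\hat{\alpha}}$; this directly produces a decomposition $u_0=u^{1,\epsilon}+u^{2,\epsilon}$ with Besov/Sobolev bounds of exactly the form \eqref{baru_1est}--\eqref{baru_2est}. Two additional steps remain: enforcing the divergence-free condition via a Leray projection, and producing the $L_2$ bound \eqref{baru_1est.1}, which is the ingredient not contained in \cite{barker2018}.

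Parameter selection is the first task. The second interpolation identity in the Proposition forces $\theta=\tfrac{2(p-q)}{q(p-2)}\in(0,1)$, and the first identity then determines $\hat{\alpha}$ as an affine function of $p$ and $\delta$ via $(-1+\tfrac{3}{p}+\delta)(1-\theta)+\theta\hat{\alpha}=s$. The four constraints of Proposition~\ref{Decompgeneralbesov}, namely $s_1<s_0<s_2$ together with $s_i<\tfrac{3}{p_i}$, translate into the requirement that $\delta$ lies in a non-empty open sub-interval of $(0,1-\tfrac{3}{p})$ whose endpoints depend on $(q,s,p)$. A short elementary computation shows that, at $p=\infty$, the lower endpoint is $\max\bigl(0,(qs+q-5)/(q-2)\bigr)$ and the upper endpoint is $1+sq/(q-2)$, and these differ by a positive quantity precisely because $s>-1+\tfrac{2}{q}$; choosing $p>\max(q,4)$ large and $\delta$ in the corresponding window (and $\hat{\alpha}$ determined by the interpolation relation, automatically in $(0,\tfrac{3}{2})$) completes the parameter selection.

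With $p,\delta,\hat{\alpha}$ in hand, Proposition~\ref{Decompgeneralbesov} yields $u^{1,\epsilon}\in\dot{B}^{-1+\frac{3}{p}+\delta}_{p,p}$ and $u^{2,\epsilon}\in\dot{H}^{\hat{\alpha}}$ with the required $\epsilon^{p-q}$ and $\epsilon^{2-q}$ estimates. I then define $\bar{u}^{i,\epsilon}:=\mathbb{P}u^{i,\epsilon}$ where $\mathbb{P}$ is the Leray projector; being a zero-order Fourier multiplier, $\mathbb{P}$ is bounded on $L^r(\mathbb{R}^3)$ for $1<r<\infty$ and on the homogeneous Besov and Sobolev spaces that appear here, and since $u_0$ is weakly divergence-free one has $\mathbb{P}u_0=u_0$, so the identity $u_0=\bar{u}^{1,\epsilon}+\bar{u}^{2,\epsilon}$ persists, both pieces are weakly divergence-free, and the Besov and Sobolev bounds survive up to multiplicative constants absorbed into the constants in~\eqref{baru_1est}--\eqref{baru_2est}.

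The genuinely new point is the $L_2$ bound \eqref{baru_1est.1}. For this I would revisit the proof of Proposition~\ref{Decompgeneralbesov}: the decomposition is built by splitting the Littlewood-Paley series $u_0=\sum_{j\in\mathbb{Z}}\dot{\Delta}_j u_0$ across an $\epsilon$-dependent threshold, giving $u^{i,\epsilon}=\sum_{j\in A_i}\dot{\Delta}_j u_0$ for disjoint $A_1,A_2\subset\mathbb{Z}$. Almost-orthogonality of the Littlewood-Paley pieces in $L^2$ (Plancherel) then gives $\|u^{i,\epsilon}\|_{L_2}^2\leq C\sum_{j\in A_i}\|\dot{\Delta}_j u_0\|_{L^2}^2\leq C\|u_0\|_{L_2}^2$, which transfers to $\bar{u}^{i,\epsilon}$ by the $L^2$-boundedness of $\mathbb{P}$. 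The main obstacle is the parameter bookkeeping of Step~1 (in particular pinning down $\hat{\alpha}<\tfrac{3}{2}$ uniformly in the allowed range of $(q,s)$); the remaining steps amount to routine Fourier-multiplier theory.
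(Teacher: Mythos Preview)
Your overall strategy matches the paper's: select parameters so that Proposition~\ref{Decompgeneralbesov} applies with $(s_0,p_0)=(s,q)$, $(s_1,p_1)=(-1+\tfrac{3}{p}+\delta,p)$, $(s_2,p_2)=(\hat\alpha,2)$, then Leray-project. Your parameter bookkeeping differs in presentation from the paper's (which first produces an auxiliary $\hat\delta>0$ by sending $p\to\infty$, then chooses $\hat\alpha\in(0,\min(3/2,\hat\delta(1-\theta)/\theta))$ small and sets $\delta:=\hat\delta-\theta\hat\alpha/(1-\theta)$), but your endpoint analysis at $p=\infty$ is a legitimate alternative.

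There is, however, a genuine gap in your $L_2$ argument. You assert that the decomposition produced by Proposition~\ref{Decompgeneralbesov} is a frequency-index partition, $u^{i,\epsilon}=\sum_{j\in A_i}\dot\Delta_j u_0$ for disjoint $A_1,A_2\subset\mathbb{Z}$. This is not how the proposition is proved (it is Proposition~2.8 of \cite{barker2018}, following Calder\'on \cite{Calderon90}). The actual construction truncates each Littlewood--Paley block in \emph{physical space}: one takes $(\dot\Delta_j u_0)\chi_{\{|\dot\Delta_j u_0|>N(j,\epsilon)\}}$ into one piece and the complementary indicator into the other. A pure frequency-index split cannot, in general, deliver the specific power relations $\|u^{1,\epsilon}\|^{p_1}\lesssim\epsilon^{p_1-p_0}\|u_0\|^{p_0}$ when $p_1\neq p_0\neq p_2$, since passing between Lebesgue exponents block-by-block is only available in the Bernstein direction; the diagonal condition $s_i-3/p_i=\mathrm{const}$ that would make Bernstein sufficient is \emph{not} satisfied here. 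Because the physical-space truncation destroys the frequency localisation of each summand, your Plancherel/almost-orthogonality argument over disjoint index sets does not apply as written. The $L_2$ bound is still true, but the paper obtains it by returning to the explicit Calder\'on expression and using the pointwise domination $|(\dot\Delta_j u_0)\chi_{\{\cdots\}}|\le|\dot\Delta_j u_0|$ together with $\|u_0\|_{L_2}^2=\sum_j\|\dot\Delta_j u_0\|_{L_2}^2$; you would need to rework this step accordingly. Incidentally, this is the real obstacle, not the parameter bookkeeping you flag at the end.
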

\begin{proof}
Take $p>\max(q,4)$. The assumption that $s\in (-1+\frac{2}{q},0)$ implies that there exists $\tilde{\varepsilon}\in (0, 1-\frac{2}{q})$ such that
\begin{equation}\label{sexpression}
s:=-1+\frac{2}{q}+\tilde{\varepsilon}.
\end{equation} 
Then, 
$$\frac{\frac{2}{p}-1}{\frac{2}{q}-1}s-(-1+\frac{3}{p})=-\frac{1}{p}+\frac{-\frac{2}{p}+1}{-\frac{2}{q}+1}\tilde{\varepsilon}.$$
So
$$lim_{p\uparrow\infty}\Big(\frac{\frac{2}{p}-1}{\frac{2}{q}-1}s-(-1+\frac{3}{p})\Big)=\frac{1}{-\frac{2}{q}+1}\tilde{\varepsilon}>0. $$
Thus, there exists a $p$ sufficiently large and a $\hat{\delta}>0$ such that
\begin{equation}\label{indicerelationsubcritical1}
\frac{1-\frac{2}{p}}{1-\frac{2}{q}}s=-1+\frac{3}{p}+\hat{\delta}.
\end{equation}
Referring to the previous proposition, let $p_{0}=q$, $p_{1}=p$ and $p_{2}=2$ and let $\theta$ be such that 
$$\frac{1-\theta}{p}+\frac{\theta}{2}=\frac{1}{q}.$$
Thus $$1-\theta= \frac{1-\frac{2}{q}}{1-\frac{2}{p}}. $$
Thus, \eqref{indicerelationsubcritical1} implies
\begin{equation}\label{indicerelationsubcriticalmain}
s=(1-\theta)\Big(-1+\frac{3}{p}+\hat{\delta}\Big)=(1-\theta)\Big(-1+\frac{3}{p}+{\hat{\delta}}-\frac{\theta\hat{\alpha}}{1-\theta}\Big)+\theta\hat{\alpha}.
\end{equation}
Choose
\begin{equation}\label{hatalphachoice}
\hat{\alpha}\in \Big(0,\min\Big(\frac{3}{2}, \frac{\hat{\delta}(1-\theta)}{\theta}\Big)\Big)
\end{equation}
Now, define $\delta:= \hat{\delta}-\frac{\theta\hat{\alpha}}{1-\theta}>0$, $s_{1}:= -1+\frac{3}{p}+\delta$, $s_{0}=s$ and $s_{2}:=\hat{\alpha}.$
The above relations allow us  to apply Proposition \ref{Decompgeneralbesov} to obtain the following decomposition:
(we note that for $\hat{\alpha}<\frac{3}{2}$, $\dot{B}^{\hat{\alpha}}_{2,2}(\mathbb{R}^3)$ coincides with $\dot{H}^{\hat{\alpha}}(\mathbb{R}^3)$ with equivalent norms)
\begin{equation}\label{udecompchapter2}
u_0= {u}^{1,\epsilon}+{u}^{2,\epsilon},
\end{equation}
\begin{equation}\label{u_1estchapter2}
\|{u}^{1,\epsilon}\|_{\dot{B}^{-1+\frac{3}{p}+\delta}_{p,p}}^{p}\leq \epsilon^{p-q} \|u_0\|_{\dot{B}^{s}_{q,q}}^q
\end{equation}
and
\begin{equation}\label{u_2estchapter2}
\|{u}^{2,\epsilon}\|_{\dot{H}^{\hat{\alpha}}}^2\leq C(\alpha,p,q,\|\mathcal{F}^{-1}\varphi\|_{L_1})\epsilon^{2-q} \|u_0\|_{\dot{B}^{s}_{q,q}}^q
.
\end{equation} 

For $j\in\mathbb{Z}$ and $m\in\mathbb{Z}$, it can be seen that  \begin{equation}\label{besovpersistency}
\|\dot{\Delta}_{m}\left( (\dot{\Delta}_{j}u_{0})\chi_{|\dot{\Delta}_{j}u_{0}|\geq N(j,\epsilon)}\right)\|_{L_{2}},\,\|\dot{\Delta}_{m}\left( (\dot{\Delta}_{j}u_{0})\chi_{|\dot{\Delta}_{j}u_{0}|\leq N(j,\epsilon)}\right)\|_{L_{2}}\leq C(\|\mathcal{F}^{-1} \varphi\|_{L_1})) \|\dot{\Delta}_{j} u_0\|_{L_2}.
\end{equation}
It is known that  $u_0\in L_{2}$ implies
$$ \|u_0\|_{L_2}^2=\sum_{j\in\mathbb{Z}}\|\dot{\Delta}_{j} u_0\|_{L_2}^2.$$
Using this, (\ref{besovpersistency}) and the expression of $u^{1,\epsilon}$ given by Proposition 2.8 of the author's paper \cite{barker2018} , we can infer that
$$\|{u}^{2,\epsilon}\|_{L_2},\,\|{u}^{1,\epsilon}\|_{L_2}\leq   C(\|\mathcal{F}^{-1} \varphi\|_{L_1})\|u_0\|_{L_{2}}.$$

The Leray projector $\mathbb{P}$, which projects onto divergence free vector fields, is defined as
$$\mathbb{P}f:= f+\nabla(-\Delta)^{-1}(\textrm{div}\,f).$$ To establish the decomposition of the Corollary, we
 apply the Leray projector to each of $u^{1,\epsilon}$ and $u^{2,\epsilon}$, which is a continuous linear operator on the homogeneous  Besov spaces under consideration.

\end{proof}
	 \end{subsection}
	
\end{section}
\begin{section}{Properties of strong solutions of the Navier-Stokes equations}
In this section, we first describe certain properties of the strong solution $u(\cdot,u_{0})$ with initial data $u_{0}\in VMO^{-1}(\mathbb{R}^3)\cap J(\mathbb{R}^3)\ \cap \dot{B}^{s}_{q,q}(\mathbb{R}^3)$ ($s\in (-1+\frac{2}{q},0)$) that will be needed to prove Theorem \ref{maintheo}. We must mention that the first subsection is mostly a collection of results already contained in the literature, gathered for the reader's convenience.
Where the context is slightly different to the previous literature, or when fixes are needed, we provide the reader with details.

In the second subsection we prove Proposition \ref{persistregularity}. This will imply that some of the Sobolev regularity of $u_{0}^{1}$ persists for $U(x,t):= u- e^{t\Delta}u_{0}^{2}$. This will be a crucial ingredient in proving Theorem \ref{maintheo}.

\begin{subsection}{Regularity properties of strong solutions}
First, we discuss the construction and regularity of the strong solution $u$ described in Theorem \ref{maintheo}. For initial data in $VMO^{-1}(\mathbb{R}^3)$, local-in-time strong solutions to the Navier-Stokes equations were shown to exist by Koch and Tataru in \cite{kochtataru}. Such solutions belong to the pathspace $\mathcal{P}_{T}$ where
\begin{equation}\label{pathspaceBMO-1}
\mathcal{P}_{T}:=\{ u\in \mathcal{S}^{'}(\mathbb{R}^3\times \mathbb{R}_{+}): \|u\|_{\mathcal{E}_{T}}<\infty\}.
\end{equation}
Here, 
\begin{equation}\label{pathspacenormdef}
\|u\|_{\mathcal{E}_{T}}:= \sup_{0<t<T} \sqrt{t}\|u(\cdot,t)\|_{L_{\infty}(\mathbb{R}^3)}+$$$$+\sup_{(x,t)\in \mathbb{R}^3\times ]0,T[}\Big(\frac{1}{|B(0,\sqrt{t})|}\int\limits_0^t\int\limits_{|y-x|<\sqrt{t}} |u|^2 dyds\Big)^{\frac{1}{2}}.
\end{equation}
Furthermore, the Koch-Tataru solutions satisfy the integral formulation of the Navier-Stokes equations
\begin{equation}\label{mildformulationofNSE}
u(x,t):= e^{t\Delta}u_{0}+\int\limits_{0}^{t} e^{(t-s)\Delta}\mathbb{P}\nabla\cdot(u\otimes u) ds.
\end{equation}
Here, $e^{t\Delta}$ denotes the heat semigroup in $\mathbb{R}^3$ and $\mathbb{P}$ denotes the projection of vector fields onto divergence-free vector fields. Throughout this paper, we denote the bilinear term by
\begin{equation}\label{bilinearterm}
B(f,g):=\int\limits_{0}^{t} e^{(t-s)\Delta}\mathbb{P}\nabla\cdot(f\otimes g) ds.
\end{equation}

From (\ref{bmo-1norm}), we see that for $0<T\leq\infty$
 \begin{equation}\label{BMO-1embeddingcritical}
u_0 \in BMO^{-1}(\mathbb{R}^3)\Rightarrow \|S(t)u_{0}\|_{\mathcal{E}_{T}}\leq C\|u_0\|_{BMO^{-1}}.
\end{equation}
Since $C_{0}^{\infty}(\mathbb{R}^3)$ is dense in $VMO^{-1}(\mathbb{R}^3)$, we can see from the above that for $u_0\in VMO^{-1}(\mathbb{R}^3)$
\begin{equation}\label{VMO-1shrinkingchapter2}
\lim_{T\rightarrow 0^{+}} \|S(t)u_{0}\|_{\mathcal{E}_{T}}=0.
\end{equation}
It was shown in \cite{kochtataru} that there exists a universal constant $C$ such that for all $f,\,g\in \mathcal{E}_{T}$ 
\begin{equation}\label{bilinbmo-1}
\|B(f,g)\|_{\mathcal{E}_{T}}\leq C\|f\|_{\mathcal{E}_{T}}\|g\|_{\mathcal{E}_{T}}.
\end{equation}
Here is the needed proposition related to the construction of the `strong solution'. The statement and references can also be found in \cite{barker2018}.
\begin{pro}\label{regularitycriticalbmo-1} 
Suppose that $u_0\in VMO^{-1}(\mathbb{R}^3)\cap J(\mathbb{R}^3).$ 
There exists a  universal constant $\varepsilon_0>0$ such that 
if 
\begin{equation}\label{smallnessasmpbmo-1}
\|S(t)u_{0}\|_{\mathcal{E}_{T}}<\varepsilon_0,
\end{equation}
then 
there exists a $u\in \mathcal{E}_{T}$, which solves the Navier-Stokes equations in the sense of distributions and satisfies the following properties.
The first property is that $u$ solves the following integral equation:
\begin{equation}\label{vintegeqnbmo-1}
u(x,t):= S(t)u_{0}+B(u,u)(x,t)
\end{equation}
in $\mathbb{R}^3\times (0,T)$, along with the estimate
\begin{equation}\label{vintegestbmo-1}
\|u\|_{\mathcal{E}_{T}}<2\|S(t)u_0\|_{\mathcal{E}(T)}.
\end{equation}
 The second property is that $u$ is a weak Leray-Hopf solution on $\mathbb{R}^3\times (0,T)$. 

 If $\pi_{u\otimes u}$ is the associated pressure we have (here, $\lambda\in (0,T)$ and $p\in (2,\infty)$):
\begin{equation}\label{presspacebmo-1}
\pi_{u\otimes u} \in L^{\frac{5}{3}}(\mathbb{R}^3\times (0,T))\cap L^{\infty}( \lambda, T; L^{\frac{p}{2}}(\mathbb{R}^3)).
\end{equation}
Furthermore for $\lambda\in(0,T)$ and $k=0,1\ldots$, $l=0,1\ldots$:
\begin{equation}\label{vpsmoothbmo-1}
\sup_{(x,t)\in \mathbb{R}^3\times (\lambda,T)}|\partial_{t}^{l}\nabla^{k} u|+|\partial_{t}^{l}\nabla^{k} \pi_{u\otimes u}|\leq c(p_0,\lambda,\|u_0\|_{BMO^{-1}},\|u_0\|_{L_{2}},k,l ).
\end{equation}
Finally, when $\varepsilon_{0}$ is sufficiently small there exists constants $C(k)$ such that for $k=0,1,\ldots$
\begin{equation}\label{CKNsmoothing}
\sup_{0<t<T} (\sqrt{t})^{k+1}\|\nabla^{k}u(\cdot,t)\|_{L_{\infty}(\mathbb{R}^3)}\leq C(k).
\end{equation}
\end{pro}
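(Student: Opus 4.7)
The plan is to assemble this proposition from ingredients in the literature, namely the Koch--Tataru construction and a bootstrapping argument to upgrade to Leray--Hopf regularity. Since the statement is a compilation, I would organize the proof in four stages corresponding to (i) the mild solution, (ii) the Leray--Hopf property, (iii) the pressure bound, and (iv) the higher-derivative decay estimates.

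\textbf{Stage 1: Mild solution.} By the Koch--Tataru bilinear estimate \eqref{bilinbmo-1}, the map $u \mapsto S(t)u_0 + B(u,u)$ is a contraction on a ball in $\mathcal{E}_T$ of radius $2\|S(t)u_0\|_{\mathcal{E}_T}$, provided $4C\|S(t)u_0\|_{\mathcal{E}_T} < 1$. This yields a unique fixed point $u\in\mathcal{E}_T$ satisfying \eqref{vintegeqnbmo-1} and \eqref{vintegestbmo-1}, with $\varepsilon_0 := (4C)^{-1}$. That $u$ solves Navier--Stokes in the sense of distributions follows from differentiating the mild formulation, keeping in mind that the projector $\mathbb{P}$ commutes with the heat semigroup.

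\textbf{Stage 2: Leray--Hopf property.} Here the extra hypothesis $u_0\in J(\mathbb{R}^3)\subset L^2$ matters. The strategy is to observe that the energy inequality is stable under approximation: regularize $u_0$ by mollification into $u_0^{(\varepsilon)}\in C_0^\infty\cap J$, construct the associated Leray--Hopf solution $u^{(\varepsilon)}$, and simultaneously build the Koch--Tataru solution $u^{(\varepsilon)}_{\mathrm{KT}}$ (whose $\mathcal{E}_T$ norm is bounded uniformly in $\varepsilon$ for $\varepsilon$ small). By Serrin-type weak--strong uniqueness in the class $\mathcal{E}_T$ (which applies since the subcritical smoothing \eqref{CKNsmoothing} will put $u^{(\varepsilon)}_{\mathrm{KT}}$ in Serrin's class on $(\lambda,T)$ for each $\lambda>0$), one gets $u^{(\varepsilon)} \equiv u^{(\varepsilon)}_{\mathrm{KT}}$ on $(0,T)$. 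Passing to the limit $\varepsilon\downarrow 0$ using the stability of the Koch--Tataru fixed point in $\mathcal{E}_T$ together with weak-$*$ lower semicontinuity of the energy identifies $u$ as a weak Leray--Hopf solution satisfying the energy inequality. This is by far the main technical point and the one I would expect to need the most care, because one must track both the $L^2$ compactness and the $\mathcal{E}_T$ continuity of the bilinear form simultaneously.

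\textbf{Stage 3: Pressure.} Taking $\pi_{u\otimes u} = R_iR_j(u_iu_j)$ (Calder\'on--Zygmund / Riesz transforms applied to the velocity product), the global bound $\pi_{u\otimes u}\in L^{5/3}(\mathbb{R}^3\times(0,T))$ follows from the standard Ladyzhenskaya--Prodi--Serrin interpolation $u\in L^{10/3}$ inherited from the energy class, combined with Calder\'on--Zygmund boundedness of $R_iR_j$ on $L^{5/3}_{x,t}$. For the local-in-time improvement $\pi_{u\otimes u}\in L^\infty(\lambda,T;L^{p/2})$, one uses the subcritical bound $\sqrt{t}\|u(\cdot,t)\|_{L^\infty}\le 2\|S(t)u_0\|_{\mathcal{E}_T}$ together with $u\in L^\infty(\lambda,T;L^2)$ to interpolate $u\in L^p$, and again apply $R_iR_j$.

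\textbf{Stage 4: Smoothness and derivative decay.} Once $u\in\mathcal{E}_T$ with $\sqrt{t}\|u(\cdot,t)\|_{L^\infty}\le C\varepsilon_0$, Schauder-type / heat-kernel bootstrap on the Duhamel formula \eqref{mildformulationofNSE} yields \eqref{CKNsmoothing}: differentiating under the integral, one estimates $(\sqrt{t})^{k+1}\|\nabla^k u\|_{L^\infty}$ by $(\sqrt{t})^{k+1}\|\nabla^k e^{t\Delta}u_0\|_{L^\infty}$ plus $(\sqrt{t})^{k+1}\int_0^t \|\nabla^{k+1} e^{(t-s)\Delta}\mathbb{P}\|_{L^\infty\to L^\infty}\|u\|_{L^\infty}^2\,ds$, and iteratively closes in $k$ using the smallness of $\|u\|_{\mathcal{E}_T}$. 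The joint space-time smoothness bound \eqref{vpsmoothbmo-1} then follows by combining these spatial bounds with the equation to control time derivatives, the pressure terms being handled as in Stage~3. The dependence on $\|u_0\|_{L^2}$ enters only through the global pressure bound used at level $k=0$ away from $t=0$.
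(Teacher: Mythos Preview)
Your proposal is correct in outline, and Stages~1--3 are aligned with what the paper does (it cites Koch--Tataru for Stage~1, Lemari\'e-Rieusset--Prioux and the appendix of \cite{barker2018} for Stage~2, and \cite{barker2018} again for Stage~3 and for \eqref{vpsmoothbmo-1}).

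The genuine difference is in Stage~4, the derivative decay estimate \eqref{CKNsmoothing}. You propose a direct heat-kernel bootstrap on the Duhamel formula, iterating in $k$ and closing via the smallness of $\|u\|_{\mathcal{E}_T}$. The paper instead goes through the Caffarelli--Kohn--Nirenberg $\varepsilon$-regularity theorem: from $\|u\|_{\mathcal{E}_T}\le 2\varepsilon_0$ and the Riesz-transform representation of the pressure one gets $\|\pi_{u\otimes u}\|_{L^\infty(t-r^2,t;\mathrm{BMO})}\lesssim \varepsilon_0^2/(t-r^2)$, which bounds the scale-invariant quantity $r^{-2}\int\!\!\int(|u|^3+|\pi-(\pi)_{B(x,r)}|^{3/2})$ by $C\varepsilon_0^3$ for $r=\sqrt{t}/2$; taking $\varepsilon_0$ small enough to fall under the CKN threshold yields \eqref{CKNsmoothing} for all $k$ at once. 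Your route is more self-contained and does not invoke the CKN machinery, but the na\"ive version you sketch---putting all $k+1$ derivatives on the Oseen kernel and only using $\|u\|_{L^\infty}^2\lesssim s^{-1}$---produces a time integral $\int_0^t(t-s)^{-(k+1)/2}s^{-1}\,ds$ that diverges for $k\ge 1$; to close you need either to split the integral at $t/2$ and distribute derivatives via Leibniz on the near part (as in Miura--Sawada or Dong--Du), or to run an analyticity argument \`a la Germain--Pavlovi\'c--Staffilani. The CKN route in the paper avoids this bookkeeping entirely at the cost of importing a heavier external theorem.
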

\begin{proof}
Conclusions \eqref{vintegeqnbmo-1} and \eqref{vintegestbmo-1} are due to \cite{kochtataru}.
The fact that $u$ is a weak-Leray Hopf solution follows from ideas in \cite{LRprioux} (see also the appendix of \cite{barker2018}). The proof of \eqref{presspacebmo-1}-\eqref{vpsmoothbmo-1} are also described in \cite{barker2018}.

Let us focus on proving \eqref{CKNsmoothing}, which will be required to prove Theorem \ref{maintheo}. 
First, we recall the known fact that $\pi_{u\otimes u}$ is a composition of Riesz transforms acting on $u\otimes u$. Thus, using \eqref{vintegestbmo-1} and the Caldero\'{o}n-Zygmund theory we get that for $r\in (0,\sqrt{t})$
\begin{equation}\label{pressurebmo}
\|\pi_{u\otimes u}\|_{L^{\infty}(t-r^2,t; BMO(\mathbb{R}^3)}\leq C\|u\|_{L^{\infty}(\mathbb{R}^3\times (t-r^2,t)}^2\leq \frac{C'\varepsilon_{0}^{2}}{t-r^2}.
\end{equation}
Using this, we obtain that for all $x\in\mathbb{R}^3$
\begin{equation}\label{CKNexpression}
\frac{1}{r^2}\int\limits^{t}_{t-r^2}\int\limits_{B(x,r)} |u|^3+|\pi_{u\otimes u}-(\pi_{u\otimes u})_{B(x,r)}|^{\frac{3}{2}} dxdt'\leq \frac{C''r^3\varepsilon_{0}^{3}}{(t-r^2)^{\frac{3}{2}}}.
\end{equation}
Here, $C''$ is a universal constant. Taking $r:=\frac{\sqrt{t}}{2}$ we get
\begin{equation}\label{CKNexpression1}
\frac{4}{t}\int\limits^{t}_{t-\frac{t}{4}}\int\limits_{B(x,\frac{\sqrt{t}}{2})} |u|^3+|\pi_{u\otimes u}-(\pi_{u\otimes u})_{B(x,r)}|^{\frac{3}{2}} dxdt'\leq {C'''\varepsilon_{0}^{3}}.
\end{equation}
If $C'''\varepsilon_{0}^{3}\leq\varepsilon_{CKN}$, we can apply the Caffarelli-Kohn-Nirenberg theory \cite{CKN} to immediately infer \eqref{CKNsmoothing}.
\end{proof}
Next will discuss some further regularity properties of Koch and Tataru's strong solution that will be needed to prove Theorem \ref{maintheo}. First we begin with a lemma taken from \cite{zhangzhang} (Lemma 2.1 there).
\begin{lemma}\label{zhangzhangfreqlocalisedkernels}
Let $q\in\mathbb{Z}$, $(x,t)\in \mathbb{R}^3\times (0,\infty)$, $i,j,n\in\{1,2,3\}$ and $\varphi$ be as in the definition of the Littlewood Paley projectors\footnote{See section 2.2 `Function Spaces'}. Define
\begin{equation}\label{zhangkernel1}
g_{q}^{i,j,n}(x,t):=\frac{1}{8\pi^3}\int\limits_{\mathbb{R}^3} e^{ix\cdot\xi}\varphi(2^{-q}\xi)e^{-t|\xi|^2}\Big(\delta_{ij}-\frac{\xi_{i}\xi_{j}}{|\xi|^2}\Big)\xi_{n} d\xi
\end{equation}
\begin{equation}\label{zhangkernel2}
g_{1,q}^{i,j}(x,t):=\frac{1}{8\pi^3}\int\limits_{\mathbb{R}^3} e^{ix\cdot\xi}\varphi(2^{-q}\xi)e^{-t|\xi|^2}\Big(\delta_{ij}-\frac{\xi_{i}\xi_{j}}{|\xi|^2}\Big) d\xi
\end{equation}
\begin{equation}\label{zhangkernel3}
g_{2,q}(x,t):=\frac{1}{8\pi^3}\int\limits_{\mathbb{R}^3} e^{ix\cdot\xi}\varphi(2^{-q}\xi)e^{-t|\xi|^2}d\xi.
\end{equation}
Then, we have the estimates
\begin{equation}\label{firstkernelestZhang}
|g^{i,j,n}_{q}(x,t)|\leq\frac{C_{univ}2^{4q}e^{-ct2^{2q}}}{1+(2^{q}|x|)^{6}}
\end{equation}
and
\begin{equation}\label{secondkernelestZhang}
|g^{i,j}_{1,q}(x,t)|+|g_{2,q}(x,t)|\leq\frac{C_{univ}2^{3q}e^{-ct2^{2q}}}{1+(2^{q}|x|)^{6}}
\end{equation}
\end{lemma}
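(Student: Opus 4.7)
\textbf{Proof plan for Lemma \ref{zhangzhangfreqlocalisedkernels}.} The strategy is the standard one for frequency-localised heat-type kernels: rescale to a unit annulus, factor out the pointwise heat decay, and then bound the remaining oscillatory integral by (i) the trivial bound on $|x|$ small and (ii) repeated integration by parts on $|x|$ large. I will present the argument for $g_{2,q}$; the other two kernels follow by almost identical computations with bounded multipliers.

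First I would substitute $\xi = 2^{q}\eta$ in \eqref{zhangkernel3}, yielding
\begin{equation*}
g_{2,q}(x,t)=\frac{2^{3q}}{8\pi^{3}}\int_{\mathbb{R}^{3}} e^{i(2^{q}x)\cdot\eta}\,\varphi(\eta)\,e^{-(t2^{2q})|\eta|^{2}}\,d\eta.
\end{equation*}
Since $\varphi$ is supported in $\{3/4\le|\eta|\le 8/3\}$, I fix $c_{0}\in(0,9/16)$ and write $e^{-\tau|\eta|^{2}}=e^{-c_{0}\tau}\,e^{-\tau(|\eta|^{2}-c_{0})}$ with $\tau:=t2^{2q}$. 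This extracts the advertised exponential decay $e^{-c_{0}t 2^{2q}}$ and reduces matters to showing a uniform-in-$\tau$ bound
\begin{equation*}
\bigl|J(y,\tau)\bigr|\le\frac{C}{1+|y|^{6}},\qquad J(y,\tau):=\int_{\mathbb{R}^{3}} e^{iy\cdot\eta}\,\Phi_{\tau}(\eta)\,d\eta,\qquad \Phi_{\tau}(\eta):=\varphi(\eta)e^{-\tau(|\eta|^{2}-c_{0})},
\end{equation*}
with $y=2^{q}x$.

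For $|y|\le 1$ the trivial bound $|J(y,\tau)|\le \|\Phi_\tau\|_{L^{1}}\le C$ suffices. For $|y|\ge 1$ I would integrate by parts six times using the operator $L=-i|y|^{-2}(y\cdot\nabla_{\eta})$, which satisfies $L\,e^{iy\cdot\eta}=e^{iy\cdot\eta}$, giving
\begin{equation*}
J(y,\tau)=\int_{\mathbb{R}^{3}} e^{iy\cdot\eta}\,(L^{*})^{6}\Phi_{\tau}(\eta)\,d\eta,\qquad |(L^{*})^{6}\Phi_{\tau}(\eta)|\le C|y|^{-6}\sum_{|\alpha|\le 6}|\partial^{\alpha}\Phi_{\tau}(\eta)|.
\end{equation*}
The key observation, which I view as the only slightly delicate point, is that $\partial^{\alpha}\Phi_{\tau}$ is uniformly bounded in $\tau\ge 0$ on $\mathrm{supp}\,\varphi$: each derivative of $e^{-\tau(|\eta|^{2}-c_{0})}$ produces a polynomial in $\tau$ multiplied by $e^{-\tau(|\eta|^{2}-c_{0})}$, and since $|\eta|^{2}-c_{0}\ge 9/16-c_{0}>0$ on the annulus, $\tau^{N}e^{-\tau(|\eta|^{2}-c_{0})}$ is bounded in $\tau\ge 0$ by a constant depending only on $N$ and $c_{0}$. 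Combining the two regimes yields $|J(y,\tau)|\le C/(1+|y|^{6})$ uniformly in $\tau$, which, together with the prefactors $2^{3q}$ and $e^{-c_{0}t2^{2q}}$, gives the bound on $g_{2,q}$ stated in \eqref{secondkernelestZhang}.

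For $g^{i,j}_{1,q}$ the only difference is the multiplier $\delta_{ij}-\xi_{i}\xi_{j}/|\xi|^{2}=\delta_{ij}-\eta_{i}\eta_{j}/|\eta|^{2}$, which is smooth with all derivatives bounded on $\mathrm{supp}\,\varphi$; absorbing it into $\varphi$ does not alter any estimate, so the same bound holds. For $g^{i,j,n}_{q}$ one has the additional factor $\xi_{n}=2^{q}\eta_{n}$ in \eqref{zhangkernel1}; the $2^{q}$ comes out, upgrading the overall prefactor to $2^{4q}$, and $\eta_{n}$ is again a smooth bounded multiplier on the annulus, yielding \eqref{firstkernelestZhang}. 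The only subtlety throughout is the uniform-in-$\tau$ control of derivatives described above; everything else is routine oscillatory-integral bookkeeping.
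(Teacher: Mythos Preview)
Your proof is correct and follows the standard route for frequency-localised heat kernels: rescale to the unit annulus, extract the exponential factor $e^{-c t 2^{2q}}$ using the lower bound $|\eta|^2\ge 9/16$ on $\mathrm{supp}\,\varphi$, and then obtain spatial decay by integrating by parts, with the crucial observation that the $\tau$-dependence of the derivatives of $\Phi_\tau$ is harmless since $\tau^N e^{-\tau(|\eta|^2-c_0)}$ is bounded uniformly on the annulus. The treatment of the additional multipliers for $g_{1,q}^{i,j}$ and $g_q^{i,j,n}$ is also correct.

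As for comparison with the paper: the paper does not supply its own proof of this lemma at all. It simply imports the statement from \cite{zhangzhang} (Lemma~2.1 there) and uses it as a black box in the proof of Proposition~\ref{zhangzhangregularity}. So there is no ``paper's proof'' to compare against; your argument is essentially the standard one that would underlie the cited reference.
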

Now, we state the main further regularity properties of the solutions in Proposition \ref{regularitycriticalbmo-1}.
\begin{pro}\label{zhangzhangregularity}
Let $u$ be the strong solution as in Proposition \ref{regularitycriticalbmo-1}, with initial data $u_{0}\in VMO^{-1}(\mathbb{R}^3)\cap J(\mathbb{R}^3)$. Then one also has that
\begin{equation}\label{zhangLinfinityendpoint}
\|u\|_{L^{\infty}(0,T; \dot{B}^{-1}_{\infty,\infty})}\leq C_{univ}(\|u_{0}\|_{BMO^{-1}}+\|u\|_{\mathcal{E}_{T}}^{2}),
\end{equation}
\begin{align}\label{zhangL1B1}
\begin{split}
\|u\|_{\tilde{L}^{1}(0,T; \dot{B}^{1}_{\infty,\infty})} &\leq C_{univ}(\|u\|_{\mathcal{E}_{T}}+\|e^{t\Delta}u_{0}\|_{\mathcal{E}_{T}}+\|u\|_{\mathcal{E}_{T}}\sup_{0<s<T} s\|\nabla u(\cdot,s)\|_{L_{\infty}})+\|e^{t\Delta}u_{0}\|_{\tilde{L}^{1}(0,T; \dot{B}^{1}_{\infty,\infty})}\\&
\leq C_{univ}'(\|e^{t\Delta}u_{0}\|_{\mathcal{E}_{T}}+\|e^{t\Delta}u_{0}\|_{\tilde{L}^{1}(0,T; \dot{B}^{1}_{\infty,\infty})}).
\end{split}
\end{align}
\end{pro}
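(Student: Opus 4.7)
Both bounds are proved by starting from the mild formulation \eqref{vintegeqnbmo-1}, $u(\cdot,t)=e^{t\Delta}u_{0}+B(u,u)(\cdot,t)$, Littlewood--Paley localizing, and exploiting the sharp kernel bounds of Lemma \ref{zhangzhangfreqlocalisedkernels} against the two constituents of the $\mathcal{E}_{T}$-norm (the pointwise bound $\sqrt{s}\|u(\cdot,s)\|_{L^{\infty}}\leq\|u\|_{\mathcal{E}_{T}}$ and the Carleson-type ball average).

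\textbf{The $L^{\infty}_{T}\dot B^{-1}_{\infty,\infty}$ bound.} Using the heat-kernel characterization \eqref{besovequivalentnorm} of $\dot{B}^{-1}_{\infty,\infty}$ together with the visible embedding $BMO^{-1}\hookrightarrow\dot{B}^{-1}_{\infty,\infty}$ from \eqref{bmo-1norm}, the heat-semigroup piece obeys $\|e^{t\Delta}u_{0}\|_{\dot B^{-1}_{\infty,\infty}}\leq C\|u_{0}\|_{BMO^{-1}}$ uniformly in $t$. For the bilinear piece, I apply $\dot\Delta_{q}$ and invoke \eqref{firstkernelestZhang} to get
\[
\bigl|\dot\Delta_{q}B(u,u)(x,t)\bigr|\leq C\int_{0}^{t}\!\int_{\mathbb{R}^{3}}\frac{2^{4q}e^{-c(t-s)2^{2q}}}{1+(2^{q}|x-y|)^{6}}|u(y,s)|^{2}\,dy\,ds,
\]
with the target $2^{-q}\|\dot\Delta_{q}B(u,u)(\cdot,t)\|_{L^{\infty}}\leq C\|u\|_{\mathcal{E}_{T}}^{2}$. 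A dyadic spatial decomposition around $x$ into annuli of radii $R_{k}\sim 2^{-q+k}$ reduces matters to bounding $\int_{0}^{t}\int_{|y-x|<R_{k}}|u(y,s)|^{2}\,dy\,ds$. The Carleson-type second constituent of the $\mathcal{E}_{T}$-norm (see \eqref{pathspacenormdef}) furnishes a bound of the shape $CR_{k}^{3}\|u\|_{\mathcal{E}_{T}}^{2}$ on the timescale $R_{k}^{2}$; when the relevant time window is longer, the exponential factor $e^{-c(t-s)2^{2q}}$ effectively truncates the integration back to the Carleson scale. Summing the geometric series $\sum_{k\geq 0}2^{4q-6k}R_{k}^{3}\sim 2^{q}$ then produces the estimate.

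\textbf{The $\tilde L^{1}_{T}\dot B^{1}_{\infty,\infty}$ bound.} The heat-semigroup contribution is exactly the term $\|e^{t\Delta}u_{0}\|_{\tilde{L}^{1}(0,T;\dot B^{1}_{\infty,\infty})}$ on the right-hand side, by the very definition \eqref{cheminlerner} of the Chemin--Lerner norm. For the bilinear part $\sup_{q}2^{q}\int_{0}^{T}\|\dot\Delta_{q}B(u,u)(\cdot,t)\|_{L^{\infty}}\,dt$ I invoke Lemma \ref{zhangzhangfreqlocalisedkernels} once more and integrate in $t$ first, collecting the factor $c^{-1}2^{-2q}$ from $\int_{s}^{T}e^{-c(t-s)2^{2q}}dt$. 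To tame the $s\to 0$ singularity of $\|u\otimes u(\cdot,s)\|_{L^{\infty}}$, I shift one derivative onto $u$ via the divergence-free identity $\nabla\cdot(u\otimes u)=u\cdot\nabla u$ and split
\[
\|u\cdot\nabla u(\cdot,s)\|_{L^{\infty}}\leq \|u(\cdot,s)\|_{L^{\infty}}\|\nabla u(\cdot,s)\|_{L^{\infty}},
\]
bounding $\|u(\cdot,s)\|_{L^{\infty}}$ by either $\|u\|_{\mathcal{E}_{T}}/\sqrt{s}$ or the $e^{t\Delta}u_{0}$-controlled contribution (via $u=e^{t\Delta}u_{0}+B(u,u)$) and $\|\nabla u(\cdot,s)\|_{L^{\infty}}$ via $\sup_{s'}s'\|\nabla u(\cdot,s')\|_{L^{\infty}}$. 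A careful short- versus long-time splitting of the $s$-integration produces the three summands $\|u\|_{\mathcal{E}_{T}}$, $\|e^{t\Delta}u_{0}\|_{\mathcal{E}_{T}}$ and $\|u\|_{\mathcal{E}_{T}}\sup_{s}s\|\nabla u(\cdot,s)\|_{L^{\infty}}$ appearing in the first displayed inequality. The passage to the cleaner second inequality is then immediate from \eqref{vintegestbmo-1}, which gives $\|u\|_{\mathcal{E}_{T}}\leq 2\|e^{t\Delta}u_{0}\|_{\mathcal{E}_{T}}$, and from \eqref{CKNsmoothing} with $k=1$, which bounds $\sup_{s}s\|\nabla u(\cdot,s)\|_{L^{\infty}}$ by a universal constant.

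\textbf{Main obstacle.} The genuine difficulty in both estimates is the singular behaviour $\|u(\cdot,s)\|_{L^{\infty}}\sim\|u\|_{\mathcal{E}_{T}}/\sqrt{s}$ near $s=0$: the direct bound $\|u\otimes u(\cdot,s)\|_{L^{\infty}}\leq\|u\|_{\mathcal{E}_{T}}^{2}/s$ produces a logarithmically divergent time integral. Replacing this $L^{\infty}$-on-$L^{\infty}$ bound either by the Carleson constituent of the $\mathcal{E}_{T}$-norm (for the first estimate) or by the derivative-shifting trick coupled with \eqref{CKNsmoothing} (for the second), and combining it with the sharp frequency-space localization in Lemma \ref{zhangzhangfreqlocalisedkernels}, is what closes the argument.
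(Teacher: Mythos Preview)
Your sketch of \eqref{zhangLinfinityendpoint} is in line with the Zhang--Zhang argument the paper invokes: the Carleson constituent of $\|\cdot\|_{\mathcal{E}_T}$ paired with the kernel bound \eqref{firstkernelestZhang} is indeed what is used.

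For \eqref{zhangL1B1}, however, your proposed route has a genuine gap. After Fubini and the bound $\int_s^T e^{-c(t-s)2^{2q}}dt\leq c^{-1}2^{-2q}$, you are left to control $2^{-q}\int_0^T\|u\cdot\nabla u(\cdot,s)\|_{L^\infty}\,ds$ (or $\int_0^T\|u\otimes u(\cdot,s)\|_{L^\infty}\,ds$ if you keep the derivative on the kernel). Either integrand behaves like $s^{-3/2}$ or $s^{-1}$ near $s=0$, which is \emph{not} integrable, and no short--versus--long splitting in $s$ alone repairs this: on the region $\{s\lesssim 2^{-2q},\ t\geq 2^{-2q}\}$ the exponential contributes only a harmless constant $e^{-c/2}$, and the $s$-integral still diverges.

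The ingredient you are missing --- and the one point the paper spells out in detail --- is to handle the far-past part of the Duhamel integral by the semigroup identity
\[
\int_0^{t/2}\dot{\Delta}_q e^{(t-s)\Delta}\mathbb{P}\nabla\cdot(u\otimes u)\,ds
=\dot{\Delta}_q e^{(t/2)\Delta}\bigl(u(\cdot,t/2)-e^{(t/2)\Delta}u_0\bigr)
=\int_{\mathbb{R}^3} g_{2,q}\bigl(y,\tfrac t2\bigr)\bigl(u(x-y,\tfrac t2)-e^{(t/2)\Delta}u_0(x-y)\bigr)\,dy.
\]
This replaces the singular-in-$s$ integral by a single evaluation at time $t/2$, controlled by $(\|u\|_{\mathcal{E}_T}+\|e^{t\Delta}u_0\|_{\mathcal{E}_T})\,t^{-1/2}$; the kernel bound \eqref{secondkernelestZhang} and the $t$-integration over $(2^{-2q},T)$ then close the estimate. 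Your derivative-shifting idea (using $g_{1,q}$ together with $\sup_{s}s\|\nabla u\|_{L^\infty}$) is precisely what the paper does for the \emph{near-past} piece $s\in(t/2,t)$, and the regime $2^{-2q}\geq T$ is handled by the crude bound $\|\dot\Delta_q u(\cdot,t)\|_{L^\infty}\leq\|u\|_{\mathcal{E}_T}t^{-1/2}$ applied directly to $u$ rather than to $B(u,u)$. But for the far-past piece you need the semigroup rewriting, not a Fubini argument.
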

\begin{proof}
The proof of \eqref{zhangLinfinityendpoint} is proven in \cite{zhangzhang} (Proposition 2.2 there) for the case when $T=\infty$. The adjustment for $T$ finite is not difficult and we omit it. In \cite{zhangzhang}, a proof of \eqref{zhangL1B1} is also presented for $T=\infty$ (Proposition 2.3 there) but there seems to be a minor error in the argument. We find it instructive to present their arguments here, but with the minor fix.

From the definition of the Chemin-Lerner spaces \eqref{cheminlerner}, we need to show
\begin{align}
\begin{split}
&\sup_{q\in\mathbb{Z}} 2^{q}\int\limits_{0}^{T}\|\dot{\Delta}_{q}u(\cdot,t)\|_{L_{\infty}} dt \leq C_{univ}(\|u\|_{\mathcal{E}_{T}}+\|e^{t\Delta}u_{0}\|_{\mathcal{E}_{T}}+\|u\|_{\mathcal{E}_{T}}\sup_{0<s<T} s\|\nabla u(\cdot,s)\|_{L_{\infty}})\\&+\|e^{t\Delta}u_{0}\|_{\tilde{L}^{1}(0,T; \dot{B}^{1}_{\infty,\infty})}.
\end{split}
\end{align}
For the case of $q$ such that  $2^{-2q}\geq T$, we have
\begin{equation}\label{zhangregeasy}
2^{q}\int\limits_{0}^{T}\|\dot{\Delta}_{q}u(\cdot,t)\|_{L_{\infty}} dt \leq 2^{q}\|u\|_{\mathcal{E}_{T}} \int\limits_{0}^{2^{-2q}} \frac{1}{t^{\frac{1}{2}}} dt\leq C\|u\|_{\mathcal{E}_{T}}.
\end{equation}
We are left to consider the case $T>2^{-2q}$ and we see that for this case, it suffices to show
\begin{equation}\label{zhangregharder}
2^{q}\int\limits_{2^{-2q}}^{T}\|\dot{\Delta}_{q}B(u,u)(\cdot,t)\|_{L_{\infty}} dt \leq C_{univ}(\|u\|_{\mathcal{E}_{T}}+\|e^{t\Delta}u_{0}\|_{\mathcal{E}_{T}}+\|u\|_{\mathcal{E}_{T}}\sup_{0<s<T} s\|\nabla u(\cdot,s)\|_{L_{\infty}}).
\end{equation}
Following the proof given in \cite{zhangzhang}, we have that
\begin{align}
\begin{split}\label{bilinearfreqlocalised}
&\dot{\Delta}_{q} B(u,u)(x,t)= \int\limits_{0}^{\frac{t}{2}} \dot{\Delta}_{q} e^{(t-s)\Delta}\mathbb{P}\nabla\cdot(u\otimes u)ds+\int\limits_{\frac{t}{2}}^{t}\int\limits_{\mathbb{R}^3}  g_{1,q}(y,t-s)\nabla\cdot(u(x-y,s)\otimes u(x-y,s))dsdy\\&=
F_{1,q}(x,t)+F_{2,q}(x,t).
\end{split}
\end{align}
We treat $F_{1,q}$ in the same way as in \cite{zhangzhang}. First observe that 
$$F_{1,q}(x,t)= \int\limits_{\mathbb{R}^3} g_{2,q}(y, \frac{t}{2})(u(x-y,\frac{t}{2})-e^{\frac{t}{2}\Delta}u_{0}(x-y)) dy. $$
Using Lemma \ref{zhangzhangfreqlocalisedkernels}, we get that
\begin{align}
\begin{split}\label{zhangF1qest}
&2^{q}\int\limits_{2^{-2q}}^{T}\|F_{1,q}(\cdot,t)\|_{L^{\infty}} dt\leq C_{univ} 2^{q}(\|u\|_{\mathcal{E}_{T}}+\|e^{t\Delta}u_{0}\|_{\mathcal{E}_{T}})\int\limits_{2^{-2q}}^{T}\int\limits_{\mathbb{R}^3}\frac{C_{univ}2^{3q}e^{-ct2^{2q}}}{t^{\frac{1}{2}}(1+(2^{q}|y|)^{6})} dydt\\&\leq C_{univ} (\|u\|_{\mathcal{E}_{T}}+\|e^{t\Delta}u_{0}\|_{\mathcal{E}_{T}}).
\end{split}
\end{align}
The  very minor fix required for the proof in \cite{zhangzhang} regards $F_{2,q}$.  In \cite{zhangzhang}, the integral is split into the regions $B(0, 2\sqrt{t})$ and $\mathbb{R}^3\setminus B(0, 2\sqrt{t})$. In the outer region the authors in \cite{zhangzhang} integrate by parts but seem to not account for the boundary traces on the sphere of radius $2\sqrt{t}$. The very minor fix we propose is to not integrate by parts and then to proceed with similar arguments to the estimates used in \cite{zhangzhang} for the integral over the ball $B(0, 2\sqrt{t})$.

Indeed, using Lemma \ref{zhangzhangfreqlocalisedkernels} we get that
\begin{align}
\begin{split}\label{zhangF1qestfinal}
&2^{q}\int\limits_{2^{-2q}}^{T}\|F_{2,q}(\cdot,t)\|_{L^{\infty}} dt\leq C_{univ} 2^{q}(\|u\|_{\mathcal{E}_{T}}\sup_{0<s<T} s\|\nabla u(\cdot,s)\|_{L_{\infty}})\int\limits_{2^{-2q}}^{T}\int\limits_{\frac{t}{2}}^{t}\int\limits_{\mathbb{R}^3}\frac{C_{univ}2^{3q}e^{-c(t-s)2^{2q}}}{s^{\frac{3}{2}}(1+(2^{q}|y|)^{6})} dydsdt\\&\leq C_{univ} \|u\|_{\mathcal{E}_{T}}\sup_{0<s<T} s\|\nabla u(\cdot,s)\|_{L_{\infty}}.
\end{split}
\end{align}
This completes the proof.
\end{proof}
\begin{remark}\label{VMO-1heat}
Using that $BMO^{-1}$ is continuously embedded into $\dot{B}^{-1}_{\infty,\infty}$, we get from \eqref{cheminlernerheat} that
\begin{equation}\label{BMO-1cheminlerner}
 \|e^{t\Delta} u_{0}\|_{\tilde{L}^{1}(\dot{B}^{1}_{\infty,\infty})},\,\|e^{t\Delta} u_{0}\|_{{L}^{\infty}(\dot{B}^{-1}_{\infty,\infty})}\leq \|u_{0}\|_{BMO^{-1}}.
 \end{equation}
Furthermore, if $u_{0}$ is smooth and compactly supported we have $$  \|e^{t\Delta} u_{0}\|_{\tilde{L}^{1}_{T}(\dot{B}^{1}_{\infty,\infty})}= \sup_{q\in\mathbb{Z}}\Big(2^{q} \int\limits_{0}^{T} \|\dot{\Delta}_{q} e^{t\Delta}u_{0}\|_{L^{\infty}(\mathbb{R}^3)}dt\Big)\leq cT\| u_{0}\|_{\dot{B}^{1}_{\infty,\infty}}.$$
Using this, \eqref{BMO-1cheminlerner} and a density argument allows us to infer that
\begin{equation}\label{BMO-1cheminlernershrinking}
 u_{0}\in VMO^{-1}\Rightarrow \lim_{T\downarrow 0}\|e^{t\Delta} u_{0}\|_{\tilde{L}^{1}_{T}(\dot{B}^{1}_{\infty,\infty})}=0.
 \end{equation}
\end{remark}
\end{subsection}
\begin{subsection}{Propagation of Sobolev Regularity  for the Navier-Stokes Equations}
\begin{lemma}\label{lemmalinearsobolevest}
Define 
\begin{equation}\label{Lfdef}
L(f)(\cdot,t):= \int\limits_{0}^{t} e^{(t-s)\Delta} \mathbb{P}\nabla\cdot f(\cdot,s) ds.
\end{equation}
Assume that for some finite $T>0$ and $p\in(2,\infty)$ 
\begin{equation}\label{fassumption}
f\in L^{p}_{T}L^{2}_{x}.
\end{equation}
Then for every $\beta\in (0, 1-\frac{2}{p})$ we have
\begin{equation}\label{Lfest}
\|L(f)\|_{L^{\infty}_{T} H^{\beta}}\leq C(\beta,T)\max\Big(\|f\|_{L^{p}_{T}L^{2}_{x}},\|f\|_{L^{2}_{T}L^{2}_{x}} \Big)
\end{equation}
\end{lemma}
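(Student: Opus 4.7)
My plan is to treat this as a standard Duhamel / maximal-regularity estimate for the Oseen kernel combined with H\"older's inequality in time. First I would apply Minkowski's inequality to the Duhamel formula to obtain
$$\|L(f)(t)\|_{H^{\beta}}\leq \int_{0}^{t}\|e^{(t-s)\Delta}\mathbb{P}\nabla\cdot f(s)\|_{H^{\beta}}\,ds.$$
Since the Leray projector $\mathbb{P}$ is a Fourier multiplier with symbol uniformly bounded in $\xi$, Plancherel's theorem bounds the pointwise-in-time operator norm of $e^{\tau\Delta}\mathbb{P}\nabla\cdot\colon L^{2}\to H^{\beta}$ by
$$\sup_{\xi\in\mathbb{R}^{3}}(1+|\xi|^{2})^{\beta/2}|\xi|\,e^{-\tau|\xi|^{2}}\leq C(\beta)\bigl(\tau^{-1/2}+\tau^{-(1+\beta)/2}\bigr),$$
the last inequality being a routine one-variable optimization after estimating $(1+|\xi|^2)^{\beta/2}\leq C_\beta(1+|\xi|^\beta)$.

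Substituting this pointwise-in-time bound back and applying H\"older's inequality in time with conjugate exponents $p$ and $p'=p/(p-1)$ yields
$$\|L(f)(t)\|_{H^{\beta}}\leq C(\beta)\|f\|_{L^{p}_{T}L^{2}}\left(\int_{0}^{t}\bigl[(t-s)^{-1/2}+(t-s)^{-(1+\beta)/2}\bigr]^{p'}\,ds\right)^{1/p'}.$$
The right-hand time integral is finite precisely when $\tfrac{p'}{2}<1$ \emph{and} $\tfrac{(1+\beta)p'}{2}<1$; the first is equivalent to $p>2$ (given), and the second rearranges to $\beta<1-\tfrac{2}{p}$, which is the hypothesis of the lemma. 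The resulting constant is controlled by a power of $T$ depending on $\beta$ and $p$.

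Taking the supremum over $t\in[0,T]$ then gives $\|L(f)\|_{L^{\infty}_{T}H^{\beta}}\leq C(\beta,p,T)\|f\|_{L^{p}_{T}L^{2}}$. On a finite interval with $p>2$ one has $L^{p}_{T}L^{2}\hookrightarrow L^{2}_{T}L^{2}$ with constant $T^{1/2-1/p}$, so this bound is automatically compatible with the $\max$ on the right-hand side of the statement. There is no genuine obstacle here; the entire content of the lemma lies in matching the H\"older exponent to the space-time singularity of the Oseen kernel, which is exactly what the assumption $\beta<1-\tfrac{2}{p}$ encodes. A variant using Young's convolution inequality in place of H\"older yields the same conclusion with an essentially identical constant.
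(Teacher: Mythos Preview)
Your proof is correct and follows essentially the same route as the paper: bound the Oseen kernel $e^{\tau\Delta}\mathbb{P}\nabla\cdot$ from $L^{2}$ into a Sobolev space by a power of $\tau$, then apply H\"older in time, with the integrability condition reducing exactly to $\beta<1-\tfrac{2}{p}$. The only cosmetic difference is that the paper splits $H^{\beta}$ into $L^{2}$ and $\dot{H}^{\beta}$ and handles the $L^{2}$ piece via the classical energy estimate $\|L(f)\|_{L^{\infty}_{T}L^{2}}\leq 2\|f\|_{L^{2}_{T}L^{2}}$ (this is where the $\|f\|_{L^{2}_{T}L^{2}}$ term in the $\max$ originates), whereas you treat the full inhomogeneous symbol at once and obtain a bound purely in terms of $\|f\|_{L^{p}_{T}L^{2}}$, which as you note is slightly stronger on a finite interval.
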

\begin{proof}
It is classical that
\begin{equation}\label{LfestL2}
\|L(f)\|_{L^{\infty}_{T} L^{2}_{x}}\leq 2\|f\|_{L^{2}_{T}L^{2}_{x}}.
\end{equation}
Applying \eqref{sobolevsemigroup} and using the continuity of the Leray Projector $\mathbb{P}$ on homogeneous Sobolev spaces, we get that
$$\|e^{(t-s)\Delta}\mathbb{P}\nabla\cdot f(\cdot,s)\|_{\dot{H}^{\beta}}\leq C\|e^{(t-s)\Delta} f(\cdot,s)\|_{\dot{H}^{\beta+1}}\leq \frac{C'\|f(\cdot,s)\|_{L^{2}_{x}}}{(t-s)^{\frac{1+\beta}{2}}}. $$
 We then apply H\"{o}lder's inequality in time to get 
$$\|L(f)(\cdot,t)\|_{\dot{H}^{\beta}}\leq \|f\|_{L^{p}_{T}L^{2}_{x}}\Big(\int_{0}^{t} \frac{ds}{(t-s)^{\frac{(1+\beta)p}{2(p-1)}}}\Big)^{1-\frac{1}{p}}. $$
This converges for  $p\in (2,\infty)$ and $\beta\in (0,1-\frac{2}{p})$.
\end{proof}
\begin{lemma}\label{estimatenearinitialtime}
 Let $T>0$ be finite. Suppose that $V$ is divergence-free and
\begin{equation}\label{Vassumptioninitialtime}
V\in L^{\infty}_{T}L^{2},\,\,\,\,\sup_{0<t<T} t^{\frac{1}{2}(1-\delta)}\|V(\cdot,t)\|_{L^{\infty}_{x}}<\infty.
\end{equation}
Furthermore, suppose that there exists $q\in (3,\infty)$ and $s\in (-1+\frac{2}{q},0)$ with
\begin{equation}\label{u0initialtime}
u_{0}\in J(\mathbb{R}^3)\cap \dot{B}^{s}_{q,q}.
\end{equation}
Assume that $U\in C_{w}([0,T]; J(\mathbb{R}^3))\cap L^{2}_{T} \dot{H}^{1}$ is a weak solution to the equation
\begin{equation}\label{Uequationinitialtime}
\partial_{t} U-\Delta U+V\cdot\nabla U+U\cdot\nabla V+U\cdot\nabla U+V\cdot\nabla V+ \nabla \Pi=0\,\,\,\textrm{in}\,\,\,\mathbb{R}^3\times (0,T)
\end{equation}
\begin{equation}\label{Uidinitialtime}
\textrm{div}\,U=0,\,\,\,U(\cdot,0)=u_{0}.
\end{equation}
Furthermore, assume that $U$ satisfies the energy inequality  for $t\in [0,T]$: 
\begin{equation}\label{Uenergyinequalityestnearinitialtime}
 \|U(\cdot,t)\|_{L^{2}(\mathbb{R}^3)}^2+2\int\limits_{0}^{t}\int\limits_{\mathbb{R}^3} |\nabla U(y,s)|^2 dyds\leq \|u_{0}\|_{L^{2}(\mathbb{R}^3)}^2+ 2\int\limits_{0}^{t}\int\limits_{\mathbb{R}^3} (V\otimes U+ V\otimes V): \nabla U dyds.
 \end{equation} 
Then the above assumptions imply that there exists a $\gamma(s,q,\delta)>0$ such that
\begin{equation}\label{fluctuationest}
\sup_{0<t<T} \frac{\|U(\cdot,t)-e^{t\Delta}u_{0}\|_{L^{2}_{x}}}{t^{\frac{\gamma}{2}}}<\infty.
\end{equation}
\end{lemma}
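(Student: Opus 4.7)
My plan is to derive an energy-type inequality for the fluctuation $W := U - e^{t\Delta}u_{0}$, which vanishes at $t=0$, and to close the estimate using the Besov--heat bounds coming from the hypothesis $u_{0}\in\dot{B}^{s}_{q,q}$.

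Since $U$ satisfies only the energy inequality \eqref{Uenergyinequalityestnearinitialtime}, I cannot multiply the equation for $W$ by $W$ directly. Instead, I subtract the heat energy identity $\|h(\cdot,t)\|_{L^{2}}^{2}+2\int_{0}^{t}\|\nabla h\|_{L^{2}}^{2}\,ds=\|u_{0}\|_{L^{2}}^{2}$ (where $h := e^{t\Delta}u_{0}$) from \eqref{Uenergyinequalityestnearinitialtime}. The resulting cross-term $\langle W(t),h(t)\rangle$ I compute by using $h$ itself as a test function in the weak formulation of \eqref{Uequationinitialtime}, which is legitimate since $h \in C([0,T];L^{2})\cap L^{2}(0,T;\dot{H}^{1})$. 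After integration by parts, substitution of $U=W+h$ into the trilinear terms, and exploiting the divergence-free cancellations such as $\int V\cdot\nabla f\cdot f = 0$, I arrive at an inequality of the shape
\[
\|W(\cdot,t)\|_{L^{2}}^{2}+2\int_{0}^{t}\|\nabla W\|_{L^{2}}^{2}\,ds\leq \mathcal{R}(t),
\]
where $\mathcal{R}(t)$ is a sum of trilinear integrals of the types $\int V\cdot\nabla h\cdot W$, $\int h\cdot\nabla V\cdot W$, $\int h\cdot\nabla h\cdot V$, $\int W\otimes W:\nabla h$, $\int h\otimes h:\nabla W$ and $\int V\otimes V:\nabla W$.

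Next, I estimate each term in $\mathcal{R}$ using: (i) the decay $\|V(\cdot,s)\|_{L^{\infty}}\leq Cs^{-(1-\delta)/2}$ from \eqref{Vassumptioninitialtime}; (ii) the Besov--heat estimates $\|h(\cdot,s)\|_{L^{q}}\leq Cs^{s/2}\|u_{0}\|_{\dot{B}^{s}_{q,\infty}}$ and, by interpolation with the $L^{2}$-norm, $\|h\|_{L^{\infty}}\leq Cs^{(s-3/q)/2}\|u_{0}\|_{\dot{B}^{s}_{q,\infty}}$ and $\|h\|_{L^{4}}^{2}\leq \|h\|_{L^{2}}\|h\|_{L^{\infty}}$; (iii) the heat energy identity $\int_{0}^{T}\|\nabla h\|_{L^{2}}^{2}\,ds\leq\|u_{0}\|_{L^{2}}^{2}$. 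Terms linear in $W$ or $\nabla W$, paired with factors like $\|V\|_{L^{\infty}}\|\nabla h\|_{L^{2}}$ or $\|h\|_{L^{\infty}}\|V\|_{L^{2}}$, contribute after Cauchy--Schwarz in time either $Ct^{\delta/2}\sup_{0<s<t}\|W(\cdot,s)\|_{L^{2}}$ or $Ct^{\delta/2}$, with strictly positive temporal powers coming from $\delta>0$. The two quadratic-in-$W$ trilinear terms $\int W\otimes W:\nabla h = -\int (W\cdot\nabla W)\cdot h$ and $\int h\otimes h:\nabla W$ are bounded respectively by $\|h\|_{L^{\infty}}\|W\|_{L^{2}}\|\nabla W\|_{L^{2}}$ and $\|h\|_{L^{4}}^{2}\|\nabla W\|_{L^{2}}$; after Young's inequality absorbs the factors of $\|\nabla W\|_{L^{2}}^{2}$ into the left-hand side, the remainder produces a Gronwall-type coefficient together with a source of the form $Ct^{2\gamma_{0}}$. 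Combining everything, I obtain
\[
\|W(\cdot,t)\|_{L^{2}}^{2}+\int_{0}^{t}\|\nabla W\|_{L^{2}}^{2}\,ds\leq Ct^{2\gamma_{0}}+C\int_{0}^{t}\mu(s)\|W(\cdot,s)\|_{L^{2}}^{2}\,ds,
\]
for a suitable $\gamma_{0}>0$ and an $L^{1}(0,T)$ weight $\mu$ with integrable singularity at $s=0$, and a standard Gronwall argument finally yields $\|W(\cdot,t)\|_{L^{2}}^{2}\leq Ct^{\gamma}$ for small $t$, with $\gamma=\gamma(s,q,\delta)>0$, which is \eqref{fluctuationest}.

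The main obstacle lies in the two quadratic-in-$W$ trilinear terms: getting \emph{both} an integrable Gronwall weight $\mu$ \emph{and} a strictly positive temporal power $t^{2\gamma_{0}}$ forces a careful interpolation between the $L^{2}$-based and the $\dot{B}^{s}_{q,q}$-based heat estimates for $h$, possibly combined with a Littlewood--Paley decomposition of $h$ into low- and high-frequency parts (the low-frequency part handled via Bernstein's inequality, the high-frequency part via the Besov--heat characterization). The strict inequality $s>-1+2/q$, equivalently the gap $\tilde{\varepsilon}:=s+1-2/q>0$, is used in an essential way to make the relevant temporal exponents positive; at the endpoint $s=-1+2/q$ (corresponding to $u_{0}\in VMO^{-1}$ with no additional Besov regularity) the method does not close, consistent with the open status of conjecture \textbf{(C)} from the introduction.
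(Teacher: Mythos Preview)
Your outline is correct and is precisely the energy-inequality strategy for $W=U-e^{t\Delta}u_{0}$ that the paper defers to (it cites Lemma~1.5 of \cite{barker2018} for $V\equiv0$ and \S C.3.2 of \cite{BP18}, stating only that the extra drift terms coming from the subcritical $V$ are harmless). In that sense you are reconstructing the referenced argument rather than diverging from it, and your treatment of the $V$-terms via $\|V(\cdot,s)\|_{L^{\infty}}\leq Cs^{-(1-\delta)/2}$ is exactly the ``insignificant adjustment'' the paper alludes to.

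One point deserves sharpening. The obstacle you flag is genuine: the naive bound $|\int (W\cdot\nabla W)\cdot h|\leq \|h\|_{L^{\infty}}\|W\|_{L^{2}}\|\nabla W\|_{L^{2}}$ produces the Gronwall weight $\|h(\cdot,s)\|_{L^{\infty}}^{2}\sim s^{\,s-3/q}$, which is integrable only for $s>-1+3/q$, strictly narrower than the hypothesis $s>-1+2/q$. Interpolating $h$ between $L^{2}$ and $L^{q}$ does not improve this threshold. The fix in \cite{barker2018} is the Calder\'on-type splitting (cf.\ Proposition~\ref{Decompgeneralbesov} / Corollary~\ref{Decomp} here): write $u_{0}=\bar u^{1,\epsilon}+\bar u^{2,\epsilon}$ with $\bar u^{1,\epsilon}\in\dot B^{-1+3/p+\delta}_{p,p}$ subcritical and $\bar u^{2,\epsilon}\in L^{2}$, so that $\|e^{t\Delta}\bar u^{1,\epsilon}\|_{L^{\infty}}^{2}\lesssim t^{-(1-\delta)}\in L^{1}(0,T)$, while the $L^{2}$-piece is handled by making its contribution small (choosing $\epsilon$) and absorbing. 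Your ``possibly combined with a Littlewood--Paley decomposition of $h$'' is pointing in the right direction, but you should make explicit that it is this splitting, not mere interpolation, that bridges the gap between $-1+2/q$ and $-1+3/q$.
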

\begin{proof}
Since $V\in L^{\infty}_{t}L^{2}_{x}$ and $V$ belongs to subcritical spaces, the proof of the above can be completed using similar reasoning in the author's paper on weak-strong uniqueness ( in particular Lemma 1.5 in \cite{barker2018},which treated the case $V\equiv 0$). For the case $V\equiv 0$, we also refer to section C.3.2 in \cite{BP18}. Since the adjustments required due to $V$ are insignificant, we omit the details.
\end{proof}

\noindent{\textbf{Proof of Proposition \ref{persistregularity}}}\\
First, it is known that $U$ satisfies the mild formulation of the Navier-Stokes equations
\begin{equation}\label{Umild}
U(\cdot,t)= e^{t\Delta} u_{0}^{1}+L(V\otimes U+U\otimes V+V\otimes V)(\cdot,t)+L(U\otimes U)(\cdot,t).
\end{equation}
First, using \eqref{u0persist}, it is immediate that
\begin{equation}\label{semigroupsobolevpersist}
e^{t\Delta}u_{0}^{1}\in L^{\infty}(0,\infty; H^{\hat{\alpha}}).
\end{equation}
Using that $U\in C_{w}([0,T]; J(\mathbb{R}^3))$ and \eqref{Vassumptionpersist}, it easily follows that there exists a $p(\delta)>2$ such that
$$V\otimes U+U\otimes V+V\otimes V\in L^{p}_{T}L^{2}_{x}. $$
We can therefore apply Lemma \ref{sobolevsemigroup} to get that there exists a positive $\hat{\alpha_{1}}(p)$ such that
\begin{equation}\label{linearpersist}
L(V\otimes U+U\otimes V+V\otimes V)(\cdot,t)\in L^{\infty}(0,T; H^{\hat{\alpha_{1}}}).
\end{equation}
The term $L(U\otimes U)(\cdot,t)$ requires more work.
First, notice that by Lebesgue interpolation
$$(2^{j(\frac{\hat{\alpha}}{2}-\frac{1}{2})}\|\dot{\Delta}_{j} u^{1}_{0}\|_{L_{4}})^4\leq(2^{-j}\|\dot{\Delta}_{j}u_{0}^{1}\|_{L^{\infty}_{x}})^2(2^{j\hat{\alpha}}\|\dot{\Delta}_{j}u_{0}^{1}\|_{L^{2}_{x}})^2.  $$
Thus, $u_{0}^{1}\in \dot{B}^{\frac{\hat{\alpha}}{2}-\frac{1}{2}}_{4,4}$. Since $\hat{\alpha}\in (0,1)$, we may apply Lemma \ref{estimatenearinitialtime} to infer that there exists $\gamma(\hat{\alpha})>0$ such that
\begin{equation}\label{estinitialwithinprop}
\sup_{0<t<T} \frac{\|U(\cdot,t)-e^{t\Delta}u_{0}^{1}\|_{L^{2}_{x}}}{t^{\frac{\gamma}{2}}}<\infty.
\end{equation}
By the heat-flow characterization of homogeneous Besov spaces, we have
\begin{equation}\label{heatflowL4}
\|e^{t\Delta}u_{0}^{1}\|_{L^{4}(\mathbb{R}^3)}\leq\frac{C\|u_{0}^{1}\|_{\dot{B}^{\frac{\hat{\alpha}}{2}-\frac{1}{2}}_{4,4}}}{t^{\frac{1}{4}(1-\hat{\alpha})}}.
\end{equation}
We also get that $\sup_{0<t} t^{\frac{1}{2}}\|e^{t\Delta} u_{0}^{1}\|_{L^{\infty}(\mathbb{R}^3)}\leq C_{univ}\|u_{0}^{1}\|_{\dot{B}^{-1}_{\infty,\infty}(\mathbb{R}^3)}.$  Using this in conjunction with \eqref{Ucritical}, we infer
\begin{equation}\label{criticalfluctuation}
\sup_{0<t<T} t^{\frac{1}{2}}\|U(\cdot,t)-e^{t\Delta}u_{0}^{1}\|_{L^{\infty}(\mathbb{R}^3)}<\infty.
\end{equation}
Now, we write
\begin{align}
 \begin{split}\label{LUUexpression}
 &L(U\otimes U)(\cdot,t)= L((U-e^{t\Delta} u_{0}^{1})\otimes (U-e^{t\Delta} u_{0}^{1})+e^{t\Delta} u_{0}^{1}\otimes (U-e^{t\Delta} u_{0}^{1})+ (U-e^{t\Delta} u_{0}^{1})\otimes e^{t\Delta}u_{0}^{1}))(\cdot,t)\\&+L(e^{t\Delta}u_{0}^{1}\otimes e^{t\Delta}u_{0}^{1} )(\cdot,t).
 \end{split}
 \end{align}
 Using \eqref{estinitialwithinprop}-\eqref{criticalfluctuation}, we infer that there exists $q(\hat{\alpha}, \gamma)>2$ such that
 $$ (U-e^{t\Delta} u_{0}^{1})\otimes (U-e^{t\Delta} u_{0}^{1})+e^{t\Delta} u_{0}^{1}\otimes (U-e^{t\Delta} u_{0}^{1})+ (U-e^{t\Delta} u_{0}^{1})\otimes e^{t\Delta}u_{0}^{1}+e^{t\Delta}u_{0}^{1}\otimes e^{t\Delta}u_{0}^{1} \in L^{q}_{T}L^{2}_{x}. $$
 We then can apply Lemma \ref{lemmalinearsobolevest} to deduce that there exists $\hat{\alpha}_{2}(q)>0$ such that
 \begin{equation}\label{persistnonlinear}
 L(U\otimes U)(\cdot,t)\in L^{\infty}(0,T; H^{\hat{\alpha}_{2}(q)}).
 \end{equation}
Using this, along with \eqref{Umild}- \eqref{linearpersist}, we get the desired conclusion with $\alpha=\min(\hat{\alpha},\hat{\alpha_{1}}, \hat{\alpha_{2}})>0$.

\begin{remark}\label{Usupercritical}
Let $\gamma\in (0,\frac{1}{2})$ be as in \eqref{estinitialwithinprop}.
Note that the Proposition still holds if the critical assumption \eqref{Ucritical} is replaced by the supercritical assumption
$$\sup_{0<t<T} t^{\frac{\tilde{\gamma}}{2}}\|U(\cdot, t)\|_{L^{\infty}(\mathbb{R}^3)}<\infty\,\,\,\,\textrm{with}\,\,\,\,\tilde{\gamma}\in (1, \gamma+1).$$
\end{remark}
\end{subsection}
\end{section}
\begin{section}{Reynold's stress }
\subsection{Reynold's stress expression}
Suppose that $a$ and $b$ are Schwartz functions. In the proof of Theorem \ref{Chemin} in \cite{chemin}, Chemin evaluates  the Reynold's stress \eqref{Reynoldsstresschemin} of the form
$$B_{j}(a,b):= \dot{S}_{j}(a)\dot{S}_{j}(b)- \dot{S}_{j}(ab).$$ 
To do this, Chemin uses a decomposition of the Reynold's stress that somewhat resembles the classical paraproduct decomposition. We recall Chemin's expression now. First if $N_{0}$ is a large enough integer, it is argued in \cite{chemin} that
\begin{equation}\label{cheminhighlow}
\dot{S}_{j-N_{0}}a \dot{S}_{j-N_{0}} b= \dot{S}_{j}(\dot{S}_{j-N_{0}}a \dot{S}_{j-N_{0}} b).
\end{equation}
Indeed, $\dot{S}_{j-N_{0}} a$  and $\dot{S}_{j-N_{0}}b$ have fourier transforms  supported  in the ball $B(0,\frac{4}{3}2^{j-N_{0}})$. Thus, in Fourier space $\dot{S}_{j-N_{0}} a \dot{S}_{j-N_{0}}b$ is supported in $B(0,\frac{8}{3}2^{j-N_{0}})\subset B(0, 2^{j-N_{0}+2})$. Next the Fourier multiplier of $\dot{\Delta}_{j'}$ is compactly supported on the annulus  $\{ \xi \in \R^d :  2^{j'}\frac 3 4 \leq |\xi| \leq  2^{j'}\frac 8 3 \}$. Thus
$$ 2^{j'}\frac{3}{4}\geq 2^{j-N_{0}+2}\Rightarrow \dot{\Delta}_{j'}(\dot{S}_{j-N_{0}}(a)\dot{S}_{j-N_{0}}(b))=0.$$
If $N_{0}$ is a fixed large integer (which can be chosen to be independent of $j$, for example $N_{0}=3$), the above is satisfied for all $j'\geq j$. In particular this gives \eqref{cheminhighlow}.

Using \eqref{cheminhighlow}, in \cite{chemin}  Chemin writes
$$ B_{j}(a,b)= \dot{S}_{j}(a)\dot{S}_{j}(b)- \dot{S}_{j-N_{0}}(a)\dot{S}_{j-N_{0}}(b)+ \dot{S}_{j}(\dot{S}_{j-N_{0}}a\dot{S}_{j-N_{0}}b -ab).$$
So
$B_{j}(a,b)= \sum_{k=1}^{4} B_{j}^{k}(a,b)$
Here,
\begin{equation}\label{B1def}
B^{1}_{j}(a,b):=(\dot{S}_{j}-\dot{S}_{j-N_{0}})a \dot{S}_{j}(b)
\end{equation}
\begin{equation}\label{B2def}
B^{2}_{j}(a,b):=\dot{S}_{j-N_{0}}(a)(\dot{S}_{j}-\dot{S}_{j-N_{0}})b 
\end{equation}
\begin{equation}\label{B3def}
B^{3}_{j}(a,b):=\dot{S}_{j}((\dot{S}_{j-N_{0}}-I)a \dot{S}_{j-N_{0}}b))
\end{equation}
\begin{equation}\label{B4def}
B^{4}_{j}(a,b):=\dot{S}_{j}(a(\dot{S}_{j-N_{0}}-I)b )
\end{equation}
Furthermore, $B^{4}_{j}(a,b)= B^{3}_{j}(b,a)- B^{41}_{j}(a,b).$
Here,
\begin{equation}\label{B41def}
B^{41}_{j}(a,b):= \dot{S}_{j}((I-\dot{S}_{j-N_{0}})a(I-\dot{S}_{j-N_{0}})b)
\end{equation}
In order to reduce the frequency interactions in $B^{41}_{j}(a,b)$, Chemin uses that for a large enough fixed integer $N_{1}$
\begin{equation}\label{cheminlowhigh} 
j'\geq j+N_{1},\,\,\,\,\,j''\leq j'-2\Rightarrow \dot{S}_{j}(\dot{\Delta}_{j'} a\dot{\Delta}_{j''} b)=0.
\end{equation}
To see this, notice that
$$\textrm{supp}\, \mathcal{F}(\dot{\Delta}_{j'} a\dot{\Delta}_{j''} b)\subset 2^{j'}C+2^{j''}C $$
with $C=\{\xi: \frac 3 4\leq |\xi|\leq \frac 8 3\}.$ Now for $j''\leq j'-2$ we have $$2^{j'}C+2^{j''}C\subset 2^{j'}C+ B(0, 2^{j'-2}{8}/{3})\subset 2^{j'}C'. $$ Here, $C'=\{\xi: \frac{1}{12}\leq |\xi|\leq \frac{10}{3}\}.$
Next the Fourier multiplier of $\dot{S}_{j}$ is compactly supported on the ball $B(0,\frac{4}{3}2^{j})$.
Thus, $$ 2^{j}\frac{4}{3}< \frac{2^{j'}}{12}\Rightarrow \dot{S}_{j}(\dot{\Delta}_{j'}a\dot{\Delta}_{j''}b)=0.$$
If $j'\geq j+N_{1}$, the above is true  provided $N_{1}\geq 5$. This then gives \eqref{cheminlowhigh}.
Using \eqref{cheminhighlow}, Chemin writes
\begin{equation}\label{cheminB41}
B^{41}_{j}(a,b):= \dot{S}_{j}\Big( \sum_{j',j''> j+N_{1}, |j''-j'|< 2} \dot{\Delta}_{j'} a\dot{\Delta}_{j''} b\Big)+\dot{S}_{j}\Big( \sum_{j',j''= j-N_{0}}^{j+N_{1}} \dot{\Delta}_{j'} a\dot{\Delta}_{j''} b\Big).
\end{equation}
\begin{subsection}{Reynold's stress estimate}
Let $u_{0}\in J(\mathbb{R}^3)\cap VMO^{-1}(\mathbb{R}^3)\cap  \dot{B}^{s}_{q,q}$ with $s\in (-1+\frac{2}{q},0)$, then we apply Corollary \ref{Decomp} show that
$u_{0}=u_{0}^{1}+u_{0}^2$  with
$$u_{0}^{2} \in \dot{{B}}^{-1+\frac{3}{p}+\delta}_{p,p}(\mathbb{R}^3)\cap J(\mathbb{R}^3)\,\,\,\,\,\textrm{and}\,\,\,\,\,\,u_{0}^{1}\in\dot{H}^{\hat{\alpha}}\cap J(\mathbb{R}^3).$$
Here, $p\in (4,\infty)$, $\delta\in (0, 1-\frac{3}{p})$ and $\hat{\alpha}\in (0,\frac{3}{2}).$
From Proposition \ref{interpolativeinequalitybahourichemindanchin}, we see that $u_{0}^{2}\in \dot{B}^{-1+\frac{3}{p}}_{p,p}\subset VMO^{-1}$. Thus, $$u_{0}^{1}\in\dot{H}^{\hat{\alpha}}\cap VMO^{-1}(\mathbb{R}^3)\cap J(\mathbb{R}^3).$$
  Furthermore, by \eqref{cheminlernerheat} and the Sobolev embedding  we have 
$$e^{t\Delta}u^{2}_{0}\in \tilde{L}^{\infty}_{T}(\dot{B}^{-1+\frac{3}{p}}_{p,\infty})\cap\tilde{L}^{2}_{T}(\dot{B}^{\frac{3}{p}}_{p,\infty})\cap \tilde{L}^{1}_{T}(\dot{B}^{1}_{\infty,\infty})\cap L^{\infty}_{T} L^{2}. $$
Recall we consider $U=u-e^{t\Delta}u_{0}^{2}$ which solve the \textit{perturbed} Navier-Stokes equations.
\begin{equation}\label{perturbedNSErepeat}
\partial_{t}U-\Delta U+U\cdot\nabla  U+ e^{t\Delta} u_{0}^{2}\cdot\nabla U+  U\cdot\nabla e^{t\Delta} u_{0}^{2}+\nabla P= -e^{t\Delta} u_{0}^{2}\cdot\nabla e^{t\Delta} u_{0}^{2}
\end{equation}
\begin{equation}
\textrm{div}\,U=0,\,\,\,\,\,\,U(\cdot,0)=u_{0}^{1}
\end{equation}
Then $\dot{S}_{j}U$ satisfies the equation
\begin{equation}\label{perturbedNSElowfrequencies}
\partial_{t}\dot{S}_{j}U-\Delta \dot{S}_{j}U+\dot{S}_{j}U\cdot\nabla  \dot{S}_{j}U+ e^{t\Delta} u_{0}^{2}\cdot\nabla \dot{S}_{j}U+  \dot{S}_{j}U\cdot\nabla e^{t\Delta} u_{0}^{2}+\nabla P_{j}=\nabla\cdot F_{j} -e^{t\Delta} u_{0}^{2}\cdot\nabla e^{t\Delta} u_{0}^{2}
\end{equation}
\begin{equation}
\textrm{div}\,\dot{S}_{j}U=0,\,\,\,\,\,\,\dot{S}_{j}U(\cdot,0)=\dot{S}_{j}u_{0}^{1}.
\end{equation}
 Using that $U$ and $e^{t\Delta} u_{0}^{2}$ are divergence free, we  can write  \footnote{Here,  $a\otimes b$ denotes a matrix with $(a\otimes b)_{ij}=a_{i}b_{j}$. Furthermore, $(\nabla\cdot(a\otimes b))_{i}= \partial_{l}(a_{l}b_{i})$. Here we adopt the Einstein summation convention.}
 \begin{equation}\label{Fjdef}
 F_{j}:=\sum_{k=1}^{6} F^{(k)}_{j}
 \end{equation}
\begin{equation}\label{F1jdef}
F_{j}^{(1)}:=\dot{S}_{j}U\otimes \dot{S}_{j}U-\dot{S}_{j}(U\otimes U)
\end{equation}
\begin{equation}\label{F2jdef}
F_{j}^{(2)}:=(e^{t\Delta} u_{0}^{2}- \dot{S}_{j}(e^{t\Delta} u_{0}^{2}))\otimes \dot{S}_{j}U
\end{equation}
\begin{equation}\label{F3jdef}
F_{j}^{(3)}:=\dot{S}_{j} (e^{t\Delta}u_{0}^2)\otimes \dot{S}_{j}U-\dot{S}_{j}(e^{t\Delta}u_{0}^2\otimes U)
\end{equation}
\begin{equation}\label{F4jdef}
 F_{j}^{(4)}:=\dot{S}_{j}U\otimes (e^{t\Delta} u_{0}^{2}- \dot{S}_{j}(e^{t\Delta} u_{0}^{2}))
\end{equation}
\begin{equation}\label{F5jdef}
F_{j}^{(5)}:=\dot{S}_{j}U\otimes \dot{S}_{j}(e^{t\Delta} u_{0}^{2}))-\dot{S}_{j}(U\otimes (e^{t\Delta} u_{0}^{2}))
\end{equation}
\begin{equation}\label{F6jdef}
F_{j}^{(6)}:=e^{t\Delta} u_{0}^{2}\otimes e^{t\Delta} u_{0}^{2}-\dot{S}_{j}(e^{t\Delta} u_{0}^{2}\otimes e^{t\Delta} u_{0}^{2})).
\end{equation}
Now, we state a proposition regarding estimates of $F_{j}$, which is a crucial ingredient in proving Theorem \ref{maintheo}.
\begin{pro}\label{mainreynoldsest}
Suppose that there exists $\alpha\in(0,\frac{3}{2})$ and finite $T>0$ such that
\begin{equation}\label{assumptionreynolds1}
U\in \tilde{L}^{\infty}_{T}\dot{B}^{-1}_{\infty,\infty}\cap \tilde{L}^{1}_{T}\dot{B}^{1}_{\infty,\infty}\cap L^{\infty}_{T} H^{\alpha}.
\end{equation}
Furthermore, suppose that there exists $p>4$ and a $\delta\in (0, 1-\frac{3}{p})$ such that
\begin{equation}\label{assumptionreynolds2}
u_{0}^{2}\in\dot{B}^{-1+\frac{3}{p}+\delta}_{p,p}\cap J(\mathbb{R}^3).
\end{equation}
With this $U$ and $u_{0}^{2}$ let $F_{j}$ be defined by \eqref{Fjdef}-\eqref{F6jdef}. Then we conclude that
\begin{align}
\begin{split}\label{stressestfinal}
&\|F_{j}\|_{L^{2}_{T}L^{2}_{x}}\leq c(\alpha) 2^{-j\alpha}\|U\|_{L^{\infty}_{T} \dot{H}^{\alpha}}( j\|U\|_{\tilde{L}^{2}_{T}(\dot{B}^{0}_{\infty,\infty})}+T^{\frac{1}{2}}\|U\|_{L^{\infty}_{T}(\dot{B}^{-1}_{\infty,\infty})})\\&+
c(\alpha) 2^{-j\alpha}\|U\|_{L^{\infty}_{T} \dot{H}^{\alpha}}( j\|e^{t\Delta}u_{0}^{2}\|_{\tilde{L}^{2}_{T}(\dot{B}^{0}_{\infty,\infty})}+T^{\frac{1}{2}}\|e^{t\Delta}u_{0}^{2}\|_{L^{\infty}_{T}(\dot{B}^{-1}_{\infty,\infty})})\\&+
C(p)2^{-\frac{j}{p}}\|e^{t\Delta}u_{0}^{2}\|_{\tilde{L}^{2}_{T}(\dot{B}^{\frac{3}{p}}_{p,\infty})}\|U\|_{\tilde{L}^{\infty}_{T}(\dot{B}^{-1}_{\infty,\infty})}^{\frac{2}{p}}\|U\|_{L^{\infty}_{T}(L^{2})}^{1-\frac{2}{p}}+C(\delta)T^{\frac{\delta}{4}}2^{-j\frac{\delta}{2}}\|u^{2}_{0}\|_{L_{2}}\|u^{2}_{0}\|_{\dot{B}^{-1+\frac{3}{p}+\delta}_{p,p}}.
\end{split}
\end{align}
\end{pro}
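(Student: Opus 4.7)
The plan is to apply Chemin's paraproduct-type decomposition \eqref{B1def}--\eqref{B41def} to each of the six pieces $F_j^{(k)}$; each one (or the natural pairings $F_j^{(2)}+F_j^{(3)}$ and $F_j^{(4)}+F_j^{(5)}$) is essentially of the commutator form $B_j(a,b)=\dot S_j(a)\dot S_j(b)-\dot S_j(ab)$. The four lines of \eqref{stressestfinal} then correspond respectively to the contributions of $F_j^{(1)}$, the mixed terms estimated by exploiting the Sobolev regularity of $U$, the mixed terms estimated by exploiting the subcritical Besov regularity of $e^{t\Delta}u_0^2$, and $F_j^{(6)}$.

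For $F_j^{(1)}=B_j(U,U)$, each low-high piece $B_j^1, B_j^2, B_j^3$ factors as a term localised near frequency $2^j$ times a low-frequency $\dot S_j$ term. I would place the localised factor in $L^\infty_T L^2_x$ and extract the $2^{-j\alpha}$ decay via $\ell^2$ Besov summation as $\|(I-\dot S_{j-N_0})U\|_{L^\infty_T L^2}\lesssim 2^{-j\alpha}\|U\|_{L^\infty_T\dot H^\alpha}$, and place the low-frequency factor in $L^2_T L^\infty_x$ via the splitting $\|\dot S_j U\|_{L^2_T L^\infty}\leq\|\dot S_0 U\|_{L^2_T L^\infty}+\sum_{k=0}^{j-1}\|\dot\Delta_k U\|_{L^2_T L^\infty}\lesssim T^{1/2}\|U\|_{L^\infty_T\dot B^{-1}_{\infty,\infty}}+j\|U\|_{\tilde L^2_T\dot B^0_{\infty,\infty}}$, where the first summand uses $\|\dot\Delta_k U\|_{L^\infty}\leq 2^k\|U\|_{\dot B^{-1}_{\infty,\infty}}$ summable for $k<0$. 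The high-high piece $B_j^{41}(U,U)$ is handled by Chemin's further decomposition \eqref{cheminB41}: the far high-high sum is controlled by $\sum_{j'>j+N_1}\|\dot\Delta_{j'}U\|_{L^\infty_T L^2}\|\dot\Delta_{j''}U\|_{L^2_T L^\infty}\lesssim 2^{-j\alpha}\|U\|_{L^\infty_T\dot H^\alpha}\|U\|_{\tilde L^2_T\dot B^0_{\infty,\infty}}$, and the $O(1)$ block near $j$ is handled identically. Together these produce the first line of \eqref{stressestfinal}.

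The mixed sums $F_j^{(2)}+F_j^{(3)}$ and $F_j^{(4)}+F_j^{(5)}$, which are $B_j(e^{t\Delta}u_0^2,U)$ and its transpose plus innocuous remainders of the form $(I-\dot S_j)(e^{t\Delta}u_0^2)\otimes\dot S_j U$, split into two regimes. When the localised factor in the paraproduct is a piece of $U$, the same $\ell^2$ Besov summation gives the $2^{-j\alpha}\|U\|_{L^\infty_T\dot H^\alpha}$ prefactor and the complementary low-frequency $\dot S_j(e^{t\Delta}u_0^2)$ factor is handled by the same $L^2_T L^\infty$ splitting now applied to $e^{t\Delta}u_0^2$ (valid because $u_0^2\in\dot B^{-1+3/p+\delta}_{p,p}\hookrightarrow BMO^{-1}$ and \eqref{cheminlernerheat} ensure $e^{t\Delta}u_0^2\in\tilde L^2_T\dot B^0_{\infty,\infty}\cap L^\infty_T\dot B^{-1}_{\infty,\infty}$); this produces the second line. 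The high-high piece — where the Sobolev bound is insufficient because both factors live at frequencies $\gtrsim 2^j$ — is instead estimated by placing the dyadic blocks of $e^{t\Delta}u_0^2$ in $L^2_T L^p_x$ via $\|\dot\Delta_{j'}(e^{t\Delta}u_0^2)\|_{L^2_T L^p}\lesssim 2^{-3j'/p}\|e^{t\Delta}u_0^2\|_{\tilde L^2_T\dot B^{3/p}_{p,\infty}}$ and the matching blocks of $U$ in $L^\infty_T L^{p'}_x$ (with $1/p'=1/2-1/p$) via Bernstein and the Lebesgue interpolation $\|\dot\Delta_{j''}U\|_{L^{p'}}\lesssim 2^{2j''/p}\|U\|_{\dot B^{-1}_{\infty,\infty}}^{2/p}\|U\|_{L^2}^{1-2/p}$; summing over $j'\sim j''>j$ gives the third line with the exact decay $2^{-j/p}$.

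Finally, $F_j^{(6)}=-B_j(e^{t\Delta}u_0^2,e^{t\Delta}u_0^2)$ uses only the subcritical regularity of $u_0^2$: applying \eqref{B1def}--\eqref{cheminB41} and estimating each dyadic piece via the heat-flow characterisation \eqref{besovequivalentnorm} and Lemma \ref{localisationfourierest} yields exponentially decaying time factors whose $L^2_t$ integration near $t=0$ supplies the $T^{\delta/4}$ gain, and H\"older interpolation in the geometric frequency sum between the $\dot B^{-1+3/p+\delta}_{p,p}$ and $L^2$ bounds of $u_0^2$ distributes regularity between the two copies of $e^{t\Delta}u_0^2$, giving the claimed $2^{-j\delta/2}\|u_0^2\|_{L^2}\|u_0^2\|_{\dot B^{-1+3/p+\delta}_{p,p}}$. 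The main obstacle will be the $B_j^{41}$ pieces: because $\alpha<1$ the naive high-high product carries no decay in $j$, so Chemin's frequency localisation \eqref{cheminB41} is essential, and the delicate routing — deciding, for each of the four groups of terms, which factor carries the Sobolev $\alpha$ decay and which carries the critical or subcritical Besov bound — is precisely what forces the four-term structure of \eqref{stressestfinal}.
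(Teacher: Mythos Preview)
Your plan is essentially the paper's, and the treatment of $F_j^{(1)}$ matches Lemma~\ref{reynoldsestbmosobolev} almost verbatim. Two points of execution differ from the paper.

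For the mixed terms $F_j^{(2)}$--$F_j^{(5)}$, your routing is slightly off. In the paper (Lemma~\ref{besovsobolevassumption}) the $2^{-j/p}$ Besov estimate is applied to the \emph{low--high} pieces $B_j^1(a,b)$ and $B_j^3(a,b)$ in which the frequency-localised factor is $a=e^{t\Delta}u_0^2$ (which carries no uniform-in-time $H^\alpha$ bound), not to the high--high piece. The high--high piece $B_j^{41}(a,b)$ is handled via the Sobolev bound on $b=U$ exactly as in $F_j^{(1)}$, since one of its two high-frequency factors is still $U$. Your proposed Besov estimate for $B_j^{41}$ also works, so this is cosmetic; but you do not explicitly cover the case ``localised factor is $e^{t\Delta}u_0^2$ in a low--high piece'', and that is precisely where the interpolation $\|\dot S_j b\|_{L^\infty_T L^{2p/(p-2)}}\lesssim 2^{2j/p}\|b\|_{\tilde L^\infty_T\dot B^{-1}_{\infty,\infty}}^{2/p}\|b\|_{L^\infty_T L^2}^{1-2/p}$ of \eqref{lowfrequenciesbmo-1} is indispensable.

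More substantively, $F_j^{(6)}$ is \emph{not} $-B_j(e^{t\Delta}u_0^2,e^{t\Delta}u_0^2)$: by \eqref{F6jdef} it is $(I-\dot S_j)\bigl(e^{t\Delta}u_0^2\otimes e^{t\Delta}u_0^2\bigr)$, a high-pass filter of a product rather than a commutator. The paper (Lemma~\ref{highfrequencyheat}) exploits this structure and bypasses the paraproduct machinery entirely: using the product law Lemma~\ref{besovspacesthatform analgebra} together with \eqref{besovequivalentnorm} and \eqref{sobolevsemigroup} one obtains $\|e^{t\Delta}u_0^2\otimes e^{t\Delta}u_0^2\|_{\dot H^{\delta/2}}\lesssim t^{-1/2+\delta/4}\|u_0^2\|_{L_2}\|u_0^2\|_{\dot B^{-1+\delta}_{\infty,\infty}}$, after which $(I-\dot S_j)$ extracts $2^{-j\delta/2}$ directly and one integrates in $t$. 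Your paraproduct route could be made to work for $(I-\dot S_j)(aa)$ as well, but it is needlessly elaborate here.
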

\begin{remark}\label{reynoldsstresscompact}
In proving Theorem \ref{maintheo}, we are concerned with an  estimate for $F_{j}$ that exhibits exponential decay for large frequencies. The precise estimate above is not required, so we will often use the more compact estimate
\begin{equation}\label{reynoldsstresscompactest}
\|F_{j}\|_{L^{2}_{T}L^{2}_{x}}\leq C_{U, u_{0}, p, \alpha,T,\delta} 2^{-j\gamma_{\alpha,p,\delta}}.
\end{equation}
Here, $\gamma_{\alpha,p,\delta}:=\min(\frac{\alpha}{2}, \frac{1}{p}, \frac{\delta}{2})>0$.
\end{remark}
Proposition \ref{mainreynoldsest} will immediately follow as the result of three lemmas, the first of which handles the estimate of $F_{j}^{(1)}$.
\begin{lemma}\label{reynoldsestbmosobolev}
Suppose that for some $\alpha>0$ and some finite $T$ 
\begin{equation}\label{assumptionsobolevtheorem}
b\in \tilde{L}^{\infty}_{T}\dot{B}^{-1}_{\infty,\infty}\cap \tilde{L}^{1}_{T}\dot{B}^{1}_{\infty,\infty}\cap L^{\infty}_{T} H^{\alpha}.
\end{equation}
Then we conclude
\begin{equation}\label{bmosoboleveqn}
\|B_{j}(b,b)\|_{L^{2}_{T}L^{2}_{x}}\leq c(\alpha) 2^{-j\alpha}\|b\|_{L^{\infty}_{T} \dot{H}^{\alpha}}( j\|b\|_{\tilde{L}^{2}_{T}(\dot{B}^{0}_{\infty,\infty})}+T^{\frac{1}{2}}\|b\|_{L^{\infty}_{T}(\dot{B}^{-1}_{\infty,\infty})})
\end{equation}
\end{lemma}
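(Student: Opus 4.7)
My plan is to start from the four-part decomposition
\[B_j(b,b)=B_j^1(b,b)+B_j^2(b,b)+B_j^3(b,b)+B_j^4(b,b),\]
recalled in the excerpt, together with the identity $B_j^4(b,b)=B_j^3(b,b)-B_j^{41}(b,b)$ and the further splitting \eqref{cheminB41} of $B_j^{41}$ into a high-high diagonal tail (indices $j',j''>j+N_1$ with $|j''-j'|<2$) and a finite medium-frequency cluster (indices $j',j''\in[j-N_0,j+N_1]$). Each resulting piece is a bilinear expression in $b$ having the same structure: a \emph{high-frequency-restricted} factor sitting at dyadic scales $\geq 2^{j-N_0}$, paired with a \emph{low-frequency cut-off} of $b$. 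I would bound each by H\"older in space-time as $L^\infty_T L^2_x \cdot L^2_T L^\infty_x$, placing the high-frequency-restricted factor in $L^2$ and the low-frequency cut-off in $L^\infty$.

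For the $L^2$ side, I use that $(\dot S_j-\dot S_{j-N_0})b=\sum_{j'=j-N_0}^{j-1}\dot\Delta_{j'}b$ is a bounded sum of at most $N_0$ dyadic blocks at scale $\sim 2^j$, while $(I-\dot S_{j-N_0})b=\sum_{j'\geq j-N_0}\dot\Delta_{j'}b$ satisfies, by Plancherel--Littlewood-Paley and the block bound $\|\dot\Delta_{j'}b\|_{L^2}\leq 2^{-j'\alpha}\|b\|_{\dot H^\alpha}$, the inequality $\|(I-\dot S_{j-N_0})b\|_{L^2}\lesssim_\alpha 2^{-j\alpha}\|b\|_{\dot H^\alpha}$ (geometric sum). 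The same scheme controls the high-high part of $B_j^{41}$: by the $L^2$-boundedness of $\dot S_j$ and the crude estimate $\|\dot\Delta_{j'}b\,\dot\Delta_{j''}b\|_{L^2}\leq\|\dot\Delta_{j'}b\|_{L^2}\|\dot\Delta_{j''}b\|_{L^\infty}$, combined with the geometric sum $\sum_{j'>j+N_1}2^{-j'\alpha}\lesssim_\alpha 2^{-j\alpha}$. Taking $\sup_{t\in[0,T]}$ produces the factor $\|b\|_{L^\infty_T\dot H^\alpha}$.

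For the $L^\infty_x$ side, I split
\[\dot S_j b=\sum_{j'<0}\dot\Delta_{j'}b+\sum_{0\leq j'<j}\dot\Delta_{j'}b,\]
bounding each negative-frequency block by $\|\dot\Delta_{j'}b\|_{L^\infty}\leq C\,2^{j'}\|b\|_{\dot B^{-1}_{\infty,\infty}}$ (the geometric sum yields a universal constant, and an $L^\infty_t\to L^2_t$ conversion on $[0,T]$ provides the factor $T^{1/2}$) and each non-negative-frequency block by $\|\dot\Delta_{j'}b\|_{L^2_T L^\infty_x}\leq\|b\|_{\tilde L^2_T(\dot B^0_{\infty,\infty})}$ (straight summation contributes the factor $j$). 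This yields
\[\|\dot S_j b\|_{L^2_T L^\infty_x}\lesssim j\,\|b\|_{\tilde L^2_T(\dot B^0_{\infty,\infty})}+T^{1/2}\|b\|_{L^\infty_T(\dot B^{-1}_{\infty,\infty})},\]
and an analogous bound for $\dot S_{j-N_0}b$ and for the $L^\infty$ factors arising in the medium-frequency cluster of $B_j^{41}$ (which has only $O(1)$ terms).

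The only subtle point is to ensure that the logarithmic factor $j$ does \emph{not} multiply the $\dot H^\alpha$ norm: this is arranged by consistently pairing the $\dot H^\alpha$-regular factor with $L^\infty_T L^2_x$, where square-summability from Plancherel gives a geometrically convergent bound, while the $L^\infty_x$-cut-off is estimated by summation of $L^\infty$ norms which only converges logarithmically on the positive-frequency side. Combining the pieces of the decomposition then delivers \eqref{bmosoboleveqn} with a constant depending only on $\alpha$ and on the fixed integers $N_0,N_1$.
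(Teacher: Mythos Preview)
Your proposal is correct and follows essentially the same approach as the paper's proof: the paper uses the identical decomposition into $B_j^1,B_j^2,B_j^3,B_j^4=B_j^3-B_j^{41}$, bounds the high-frequency factors in $L^\infty_T L^2_x$ via $\|\dot\Delta_k b\|_{L^2}\leq 2^{-k\alpha}\|b\|_{\dot H^\alpha}$ and geometric summation, and bounds $\|\dot S_j b\|_{L^2_T L^\infty_x}$ by splitting at $k=0$ exactly as you describe. The treatment of $B_j^{41}$ via the diagonal high-high tail plus the finite medium cluster also matches the paper's argument.
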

\begin{remark}
Notice that if $b\in \tilde{L}^{\infty}_{T}\dot{B}^{-1}_{\infty,\infty}\cap \tilde{L}^{1}_{T}\dot{B}^{1}_{\infty,\infty}$ then 
$$\|\|\dot{\Delta}_{j} b\|_{L^{\infty}_{x}}\|_{L^{2}_{T}}\leq \|2^{j}\|\dot{\Delta}_{j} b\|_{L^{\infty}_{x}}\|_{L^{1}_{T}}^{\frac{1}{2}}\|\|2^{-j}\dot{\Delta}_{j} b\|_{L^{\infty}_{x}}\|_{L^{\infty}_{T}}^{\frac{1}{2}}. $$ Thus 
$$\|b\|_{\tilde{L}^{2}_{T}(\dot{B}^{0}_{\infty,\infty})}\leq \|b\|_{\tilde{L}^{\infty}_{T}(\dot{B}^{-1}_{\infty,\infty})}^{\frac{1}{2}}\|b\|_{\tilde{L}^{1}_{T}(\dot{B}^{-1}_{\infty,\infty}})^{\frac{1}{2}}$$
\end{remark}
\begin{proof}
First we estimate $\dot{S}_{j} b= \sum_{k\leq j-1} \dot{\Delta}_{k} b$. In particular we have,
\begin{align} 
\begin{split}\label{lowfrequenciesestsobolev}
&\|\dot{S}_{j}b\|_{L^{2}_{T}L^{\infty}_{x}}\leq \sum_{k\leq 0} \|\dot{\Delta}_{k} b\|_{L^{2}_{T}L^{\infty}_{x}}+ \sum_{k=0}^{j-1} \|\dot{\Delta}_{k} b\|_{L^{2}_{T}L^{\infty}_{x}}\\&\leq j\|b\|_{\tilde{L}^{2}_{T} (\dot{B}^{0}_{\infty,\infty})}+ T^{\frac{1}{2}}\sum_{k\leq 0} 2^{k}\|2^{-k}\|\dot{\Delta}_{k} b\|_{L^{\infty}_{x}}\|_{L^{\infty}_{T}}\\&\leq C(j\|b\|_{\tilde{L}^{2}_{T} (\dot{B}^{0}_{\infty,\infty})}+T^{\frac{1}{2}}\|b\|_{\tilde{L}^{\infty}_{T}(\dot{B}^{-1}_{\infty,\infty})}).
\end{split}
\end{align}
Now, let $N_{0}$ be a fixed integer as in the expression of the Reynold's stress in the previous subsection. Next we estimate $$\dot{S}_{j}b-\dot{S}_{j-N_{0}} b=\sum_{k=j-N_{0}}^{j-1} \dot{\Delta}_{k} b .$$
In particular,
\begin{align}
\begin{split}\label{jthfrequencyestsobolev}
&\|\dot{S}_{j}b-\dot{S}_{j-N_{0}} b\|_{L^{\infty}_{T}L^{2}} \leq \sum_{k=j-N_{0}}^{j-1} 2^{-k\alpha}(2^{k\alpha}\|\dot{\Delta}_{k} b\|_{L^{\infty}_{T}L^{2}})\\&\leq \|b\|_{L^{\infty}_{T} \dot{H}^{\alpha}} \sum_{k=j-N_{0}}^{j-1} 2^{-k\alpha}\leq C_{N_{0}, \alpha} \|b\|_{L^{\infty}_{T} \dot{H}^{\alpha}} 2^{-j\alpha}.
\end{split}
\end{align}
Next, we estimate $b-\dot{S}_{j-N_{0}} b=\sum_{k=j-N_{0}}^{\infty} \dot{\Delta}_{k} b $. We get
\begin{align}
\begin{split}\label{highfrequencyestsobolev}
&\|b-\dot{S}_{j-N_{0}} b\|_{L^{\infty}_{T}L^{2}} \leq \sum_{k=j-N_{0}}^{\infty} 2^{-k\alpha}(2^{k\alpha}\|\dot{\Delta}_{k} b\|_{L^{\infty}_{T}L^{2}})\\&\leq \|b\|_{L^{\infty}_{T} \dot{H}^{\alpha}} \sum_{k=j-N_{0}}^{\infty} 2^{-k\alpha}\leq C_{N_{0}, \alpha} \|b\|_{L^{\infty}_{T} \dot{H}^{\alpha}} 2^{-j\alpha}.
\end{split}
\end{align}
Using \eqref{lowfrequenciesestsobolev}-\eqref{highfrequencyestsobolev}, together with the continuity of $\dot{S}_{j}$ on Lebesgue spaces (with bounds independent of $j$), it is not difficult to see that the following holds. Namely for $i=1,2,3$ ($B^{i}_{j}(b,b)$ as defined in the previous subsection), we have
$$ \|B_{j}^{i}(b,b)\|_{L^{2}_{T} L^{2}}\leq c(\alpha, N_{0}) 2^{-j\alpha}\|b\|_{L^{\infty}_{T} \dot{H}^{\alpha}}( j\|b\|_{\tilde{L}^{2}_{T}(\dot{B}^{0}_{\infty,\infty})}+T^{\frac{1}{2}}\|b\|_{L^{\infty}_{T}(\dot{B}^{-1}_{\infty,\infty})}).$$
Now, we must estimate $B^{41}_{j}(b,b)$.
 Using the continuity of $\dot{S}_{j}$ on Lebesgue spaces (with bounds independent of $j$) and H\"{o}lder's inequality, we get 
 \begin{align}
 \begin{split}
 &\|B^{41}_{j}(b,b)\|_{L^{2}_{T}L^{2}_{x}}\leq \sum_{j',j''> j+N_{1}, |j''-j'|< 2} \|\dot{\Delta}_{j'} b\|_{L^{\infty}_{T}L^{2}_{x}}\|\dot{\Delta}_{j''} b\|_{L^{2}_{T}L^{\infty}_{x}}+ \sum_{j',j''= j-N_{0}}^{j+N_{1}} \|\dot{\Delta}_{j'} b\|_{L^{\infty}_{T}L^{2}_{x}}\|\dot{\Delta}_{j''} b\|_{L^{2}_{T}L^{\infty}_{x}}\\&\leq
 C_{N_{1}, N_{0}} \|b\|_{\tilde{L}^{2}_{T}(\dot{B}^{0}_{\infty,\infty})}\Big( \sum_{j'\geq j-N_{0}}\|\dot{\Delta}_{j'} b\|_{L^{\infty}_{T}L^{2}_{x}}\Big)\leq C_{N_{1}, N_{0}} \|b\|_{\tilde{L}^{2}_{T}(\dot{B}^{0}_{\infty,\infty})}\Big( \sum_{j'\geq j-N_{0}} 2^{-j'\alpha} 2^{j'\alpha}\|\dot{\Delta}_{j'} b\|_{L^{\infty}_{T}L^{2}_{x}}\Big)
 \\&\leq  C_{N_{1}, N_{0},\alpha} 2^{-j\alpha} \|b\|_{\tilde{L}^{2}_{T}(\dot{B}^{0}_{\infty,\infty})}\|b\|_{L^{\infty}_{T} \dot{H}^{\alpha}}.
 \end{split}
 \end{align}
 This estimate is of the required form. Thus the proof is completed.
\end{proof}
The following Lemma treats the estimates of $F^{(2)}_{j}-F^{(5)}_{j}$ in Proposition \ref{mainreynoldsest}. Specifically, \eqref{besovsoboleveqn} handles $F^{(3)}_{j}$ and $F^{(5)}_{j}$, whilst \eqref{besovsoboleveqnperturbed} deals with $F^{(2)}_{j}$ and $F^{(4)}_{j}$.
\begin{lemma}\label{besovsobolevassumption}
Suppose that  there exists $p\in (4,\infty)$ such that\begin{equation}\label{abesovassumption}
a\in \tilde{L}^{\infty}_{T}(\dot{B}^{-1+\frac{3}{p}}_{p,\infty})\cap\tilde{L}^{2}_{T}(\dot{B}^{\frac{3}{p}}_{p,\infty})\cap \tilde{L}^{1}_{T}(\dot{B}^{1}_{\infty,\infty})\cap L^{\infty}_{T} L^{2}.
\end{equation}
Suppose that there exists $\alpha\in(0,\frac{3}{2})$ such that
\begin{equation}\label{assumptionsobolevest2}
b\in \tilde{L}^{\infty}_{T}\dot{B}^{-1}_{\infty,\infty}\cap \tilde{L}^{1}_{T}\dot{B}^{1}_{\infty,\infty}\cap L^{\infty}_{T} H^{\alpha}.
\end{equation}
Then we conclude
\begin{align}
\begin{split}\label{besovsoboleveqn}
&\|B_{j}(a,b)\|_{L^{2}_{T}L^{2}_{x}}\leq  C(p)2^{-\frac{j}{p}}\|a\|_{\tilde{L}^{2}_{T}(\dot{B}^{\frac{3}{p}}_{p,\infty})}\|b\|_{\tilde{L}^{\infty}_{T}(\dot{B}^{-1}_{\infty,\infty})}^{\frac{2}{p}}\|b\|_{L^{\infty}_{T}(L^{2})}^{1-\frac{2}{p}}\\&+
c(\alpha) 2^{-j\alpha}\|b\|_{L^{\infty}_{T} \dot{H}^{\alpha}}( j\|a\|_{\tilde{L}^{2}_{T}(\dot{B}^{0}_{\infty,\infty})}+T^{\frac{1}{2}}\|a\|_{L^{\infty}_{T}(\dot{B}^{-1}_{\infty,\infty})}),
\end{split}
\end{align}
\begin{equation}\label{besovsoboleveqnperturbed}
\|\dot{S}_{j}b(a-\dot{S}_{j}a)\|_{L^{2}_{T}L^{2}_{x}}\leq  C(p)2^{-\frac{j}{p}}\|a\|_{\tilde{L}^{2}_{T}(\dot{B}^{\frac{3}{p}}_{p,\infty})}\|b\|_{\tilde{L}^{\infty}_{T}(\dot{B}^{-1}_{\infty,\infty})}^{\frac{2}{p}}\|b\|_{L^{\infty}_{T}(L^{2})}^{1-\frac{2}{p}}
\end{equation}
\end{lemma}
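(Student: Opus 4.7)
The plan is to reuse Chemin's paraproduct-type splitting of $B_j(a,b)$ already employed in the proof of Lemma \ref{reynoldsestbmosobolev}, namely $B_j(a,b) = \sum_{k=1}^{4} B_j^k(a,b)$ with $B_j^1,\ldots,B_j^4$ as in \eqref{B1def}-\eqref{B4def}, and then $B_j^4(a,b) = B_j^3(b,a) - B_j^{41}(a,b)$ with $B_j^{41}$ further decomposed via \eqref{cheminB41}. The pieces in which $a$ carries the high frequency ($B_j^1$, $B_j^3(a,b)$) will contribute the ``subcritical'' $2^{-j/p}$ decay, while the pieces in which $b$ carries the high frequency ($B_j^2$, $B_j^3(b,a)$) will contribute the ``Sobolev'' $2^{-j\alpha}$ decay. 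The leftover $B_j^{41}$, where both factors are at high frequency, is handled through the almost-diagonal paraproduct sum \eqref{cheminB41}. The second estimate \eqref{besovsoboleveqnperturbed} is then essentially the $B_j^1$-type analysis with $(\dot S_j - \dot S_{j-N_0}) a$ replaced by $a-\dot S_j a$.

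For the $2^{-j/p}$ terms I would split $\|(\dot S_j - \dot S_{j-N_0}) a \cdot \dot S_j b\|_{L^2_x} \leq \|(\dot S_j - \dot S_{j-N_0}) a\|_{L^p_x}\|\dot S_j b\|_{L^{2p/(p-2)}_x}$. The first factor is a bounded sum of dyadic blocks at frequency $\sim 2^j$, and $a \in \dot B^{3/p}_{p,\infty}$ gives $\|\dot\Delta_k a\|_{L^p}\lesssim 2^{-3k/p}\|a\|_{\dot B^{3/p}_{p,\infty}}$, producing a $2^{-3j/p}$ factor. The second factor is handled by the interpolation $\|\dot S_j b\|_{L^{2p/(p-2)}_x}\leq \|\dot S_j b\|_{L^\infty}^{2/p}\|\dot S_j b\|_{L^2}^{1-2/p}$ combined with the Bernstein-type bound $\|\dot S_j b\|_{L^\infty}\leq \sum_{k\leq j-1}\|\dot\Delta_k b\|_{L^\infty}\lesssim 2^{j}\|b\|_{\dot B^{-1}_{\infty,\infty}}$ and the uniform $L^2$ boundedness of $\dot S_j$; this generates a $2^{2j/p}$ factor. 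Multiplying, integrating in time, and passing $\sup_{k}2^{3k/p}\|\dot\Delta_k a(\cdot,t)\|_{L^p}$ inside the $L^2_T$ norm yields the first term of \eqref{besovsoboleveqn}. The same argument applied to the high-frequency tail $(\dot S_{j-N_0}-I)a = \sum_{k\geq j-N_0}\dot\Delta_k a$, with geometric summation $\sum_{k\geq j-N_0}2^{-3k/p}\lesssim 2^{-3j/p}$, controls $B_j^3(a,b)$ with the same decay.

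For the $2^{-j\alpha}$ terms I apply Hölder in the form $L^2_T L^\infty_x \cdot L^\infty_T L^2_x$. The low-frequency factor $\dot S_{j-N_0}(a)$ is bounded in $L^2_T L^\infty_x$ by the same splitting as in \eqref{lowfrequenciesestsobolev}, giving the factor $j\|a\|_{\tilde L^2_T(\dot B^0_{\infty,\infty})} + T^{1/2}\|a\|_{L^\infty_T(\dot B^{-1}_{\infty,\infty})}$. The high-frequency factor $(\dot S_j - \dot S_{j-N_0})b$ (respectively $(\dot S_{j-N_0}-I)b$) is bounded in $L^\infty_T L^2_x$ by $\lesssim 2^{-j\alpha}\|b\|_{L^\infty_T \dot H^\alpha}$ via $\sum_{k\geq j-N_0}2^{-k\alpha}\lesssim 2^{-j\alpha}$, exactly as in \eqref{jthfrequencyestsobolev}-\eqref{highfrequencyestsobolev}. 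This gives the second term of \eqref{besovsoboleveqn} for both $B_j^2$ and $B_j^3(b,a)$.

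It remains to treat $B_j^{41}$ and to prove \eqref{besovsoboleveqnperturbed}. For $B_j^{41}$ I use \eqref{cheminB41} to write it as the almost-diagonal high--high sum $\dot S_j(\sum_{j',j''>j+N_1,|j''-j'|<2}\dot\Delta_{j'}a\,\dot\Delta_{j''}b)$ plus the finite sum over $j',j''\in[j-N_0,j+N_1]$. On each pair I run the same $L^p_x \cdot L^{2p/(p-2)}_x$ Hölder splitting as for $B_j^1$, obtaining $\lesssim 2^{-j'/p}\|a\|_{\dot B^{3/p}_{p,\infty}}\|b\|^{2/p}_{\dot B^{-1}_{\infty,\infty}}\|b\|^{1-2/p}_{L^2}$ (up to factors $\sim 1$ because $j''\sim j'$); geometric summation over $j'\geq j-N_0$ preserves the $2^{-j/p}$ decay, and the finite band over $[j-N_0,j+N_1]$ is trivial with $j'\sim j$. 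Finally, \eqref{besovsoboleveqnperturbed} is proved by the identical Hölder splitting $\|\dot S_j b(a-\dot S_j a)\|_{L^2_x} \leq \|a-\dot S_j a\|_{L^p_x}\|\dot S_j b\|_{L^{2p/(p-2)}_x}$, using $\|a-\dot S_j a\|_{L^p_x}\leq \sum_{k\geq j}2^{-3k/p}\|a\|_{\dot B^{3/p}_{p,\infty}}\lesssim 2^{-3j/p}\|a\|_{\dot B^{3/p}_{p,\infty}}$ and the same $2^{2j/p}$ bound for $\dot S_j b$. The main technical care will be in the bookkeeping between the Chemin--Lerner $\tilde L^r_T$ norms and the classical $L^r_T$ norms so that $\sup_{k} 2^{ks}\|\dot\Delta_k\cdot\|_{L^r_T L^p_x}$ emerges naturally after time integration; but each step extracts the $j$-decay pointwise in $t$ from a single dyadic block or a geometric tail, so the Chemin--Lerner norms arise on the right-hand side without further effort.
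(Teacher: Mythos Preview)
Your proposal is correct and follows the paper's argument almost verbatim: the same paraproduct decomposition, the same $L^p_x\times L^{2p/(p-2)}_x$ H\"older splitting for the pieces where $a$ carries the high frequency, and the same $L^2_TL^\infty_x\times L^\infty_TL^2_x$ splitting for the pieces where $b$ does. The only noteworthy difference is your treatment of $B_j^{41}(a,b)$. The paper reuses the argument of Lemma~\ref{reynoldsestbmosobolev} there, pairing $\|\dot\Delta_{j'}b\|_{L^\infty_TL^2_x}$ with $\|\dot\Delta_{j''}a\|_{L^2_TL^\infty_x}$ to obtain the $2^{-j\alpha}$ decay via $b\in L^\infty_T\dot H^\alpha$; you instead run the $L^p_x\times L^{2p/(p-2)}_x$ splitting on each diagonal pair and sum geometrically to recover the $2^{-j/p}$ decay. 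Both are valid and both land inside the stated right-hand side of \eqref{besovsoboleveqn}, so this is a cosmetic difference. Your direct interpolation $\|\dot S_j b\|_{L^{2p/(p-2)}}\le\|\dot S_j b\|_{L^\infty}^{2/p}\|\dot S_j b\|_{L^2}^{1-2/p}$ followed by the geometric sum on $\|\dot S_j b\|_{L^\infty}$ yields exactly the paper's bound \eqref{lowfrequenciesbmo-1}, which the paper instead derives by interpolating block-by-block and then summing; again no substantive difference.
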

\begin{remark}
Notice that by Bernstein's inequality
$$\|a\|_{\tilde{L}^{\infty}_{T}(\dot{B}^{-1}_{\infty,\infty})}\leq C\|a\|_{\tilde{L}^{\infty}_{T}(\dot{B}^{-1+\frac{3}{p}}_{p,\infty})}.$$
Thus, by the previous remark $a\in \tilde{L}^{2}_{T}(\dot{B}^{0}_{\infty,\infty}).$
\end{remark}
\begin{proof}

The first estimate we need to prove Lemma \ref{besovsobolevassumption} is
\begin{equation}\label{lowfrequenciesbmo-1}
\|\dot{S}_{j} b\|_{L^{\infty}_{T}(L^{\frac{2p}{p-2}})}\leq C(p) 2^{\frac{2j}{p}}\|b\|_{\tilde{L}^{\infty}_{T}(\dot{B}^{-1}_{\infty,\infty})}^{\frac{2}{p}}\|b\|_{L^{\infty}_{T}L^{2}_{x}}^{1-\frac{2}{p}}.
\end{equation}
By interpolation
\begin{align}
\begin{split}\label{bmo-1frequencykest}
&\|\dot{\Delta}_{k} b\|_{L^{\infty}_{T}(L^{\frac{2p}{p-2}})}\leq C \|\dot{\Delta}_{k} b|_{L^{\infty}_{T}(L^{\infty})}^{\frac{2}{p}}\|\dot{\Delta}_{k} b\|_{L^{\infty}_{T}(L^{2})}^{1-\frac{2}{p}}\leq 2^{\frac{2k}{p}}(2^{-k}\|\dot{\Delta}_{k} b\|_{L^{\infty}_{T}(L^{\infty})})^{\frac{2}{p}}\|\dot{\Delta}_{k} b\|_{L^{\infty}_{T}(L^{2})}^{1-\frac{2}{p}}\\&
\leq 2^{\frac{2k}{p}}\|b\|_{L^{\infty}_{T}(\dot{B}^{-1}_{\infty,\infty})}^{\frac{2}{p}}\| b\|_{L^{\infty}_{T}(L^{2})}^{1-\frac{2}{p}}.
\end{split}
\end{align}
Here, we used $$\|\dot{\Delta}_{k} b\|_{L^{2}_{x}}\leq C_{univ}\|b\|_{L^{2}_{x}}.$$ Summation over $k\leq j-1$ then yields \eqref{lowfrequenciesbmo-1}.

Now we proceed with the main part of the proof of Lemma \ref{besovsobolevassumption}. We start with $B^{1}_{j}(a,b)$ as defined by \eqref{B1def}.
Using H\"{o}lder's inequality and \eqref{lowfrequenciesbmo-1} gives
\begin{equation}\label{bmo-1besovB1est1}
\|B^{1}_{j}(a,b)\|_{L^{2}_{T}L^{2}}\leq C(p) 2^{\frac{2j}{p}}\|b\|_{\tilde{L}^{\infty}_{T}(\dot{B}^{-1}_{\infty,\infty})}^{\frac{2}{p}}\|b\|_{L^{\infty}_{T}L^{2}_{x}}^{1-\frac{2}{p}} \|(\dot{S}_{j}-\dot{S}_{j-N_{0}})a\|_{L^{2}_{T}L^{p}}.
\end{equation}
Now,
$$\|(\dot{S}_{j}-\dot{S}_{j-N_{0}})a\|_{L^{2}_{T}L^{p}}\leq \sum_{k=j-N_{0}}^{j-1} \|\dot{\Delta}_{k} a\|_{L^{2}_{T}L^{p}}\leq \sum_{k=j-N_{0}}^{j-1} (2^{\frac{3k}{p}}\|\dot{\Delta}_{k} a\|_{L^{2}_{T}L^{p}})2^{-\frac{3k}{p}}\leq C(p,N_{0}) 2^{-\frac{3j}{p}}\|a\|_{\tilde{L}^{2}_{T}(\dot{B}^{\frac{3}{p}}_{p,\infty})} .$$
Combining this with \eqref{bmo-1besovB1est1} gives
\begin{equation}\label{bmo-1besovB1estmain}
\|B^{1}_{j}(a,b)\|_{L^{2}_{T}L^{2}}\leq C(p, N_{0})2^{-\frac{j}{p}}\|a\|_{\tilde{L}^{2}_{T}(\dot{B}^{\frac{3}{p}}_{p,\infty})}\|b\|_{\tilde{L}^{\infty}_{T}(\dot{B}^{-1}_{\infty,\infty})}^{\frac{2}{p}}\|b\|_{L^{\infty}_{T}L^{2}_{x}}^{1-\frac{2}{p}}.
\end{equation}

Next, verbatim reasoning to Lemma \ref{reynoldsestbmosobolev} gives
\begin{equation}\label{bmo-1besovB2estmain}
\|B^{2}_{j}(a,b)\|_{L^{2}_{T}L^{2}}\leq c(\alpha, N_{0}) 2^{-j\alpha}\|b\|_{L^{\infty}_{T} \dot{H}^{\alpha}}( j\|a\|_{\tilde{L}^{2}_{T}(\dot{B}^{0}_{\infty,\infty})}+T^{\frac{1}{2}}\|a\|_{L^{\infty}_{T}(\dot{B}^{-1}_{\infty,\infty})}).
\end{equation}

Now, we estimate $B_{j}^{3}(a,b)$ defined by \eqref{B3def}. Using that $\dot{S}_{j}$ is a bounded operator on Lebesgue spaces with bound independent on $j$, H\"{o}lder's inequality and \eqref{lowfrequenciesbmo-1}, we get that
\begin{align}\label{bmo-1besovB3est1}
\begin{split}
\|B^{3}_{j}(a,b)\|_{L^{2}_{T}L^{2}_{x}}&\leq C_{univ}\|\dot{S}_{j-N_{0}}a-a\|_{L^{2}_{T}L^{p}_{x}}\|\dot{S}_{j-N_{0}} b\|_{L^{\infty}_{T}(L^{\frac{2p}{p-2}}_{x})}\leq\\&\leq  C(N_{0},p)2^{\frac{2j}{p}} \sum_{k\geq j-N_{0}} \|\dot{\Delta}_{k} a\|_{L^{2}_{T}L^{p}_{x}}\|b\|_{\tilde{L}^{\infty}_{T}(\dot{B}^{-1}_{\infty,\infty})}^{\frac{2}{p}}\|b\|_{L^{\infty}_{T}L^{2}_{x}}^{1-\frac{2}{p}}.
\end{split}
\end{align}
Now,
$$\sum_{k\geq j-N_{0}} \|\dot{\Delta}_{k} a\|_{L^{2}_{T}L^{p}_{x}}\leq \sum_{k\geq j-N_{0}} (2^{\frac{3k}{p}}\|\dot{\Delta}_{k} a\|_{L^{2}_{T}L^{p}_{x}})2^{-\frac{3k}{p}}\leq C(p,N_{0}) 2^{-\frac{3j}{p}}\|a\|_{\tilde{L}^{2}_{T}(\dot{B}^{\frac{3}{p}}_{p,\infty})} .$$
Combining this with \eqref{bmo-1besovB3est1} gives
\begin{equation}\label{bmo-1besovB3abestmain}
\|B^{3}_{j}(a,b)\|_{L^{2}_{T}L^{2}_{x}}\leq C(N_{0},p)2^{\frac{-j}{p}}\|a\|_{\tilde{L}^{2}_{T}(\dot{B}^{\frac{3}{p}}_{p,\infty})} \|b\|_{\tilde{L}^{\infty}_{T}(\dot{B}^{-1}_{\infty,\infty})}^{\frac{2}{p}}\|b\|_{L^{\infty}_{T}L^{2}_{x}}^{1-\frac{2}{p}}.
\end{equation}

Next, we must estimate $B^{3}_{j}(b,a)$. We get by H\"{o}lder's inequality and the continuity of $\dot{S}_{j}$ that
$$ \|B^{3}_{j}(b,a)\|_{L^{2}_{T}L^{2}_{x}}\leq C_{univ}\|(\dot{S}_{j-N_{0}}b-b)\|_{L^{\infty}_{T}L^{2}_{x}} \|\dot{S}_{j-N_{0}}a\|_{L^{2}_{T}L^{\infty}_{x}}.$$
We then use \eqref{lowfrequenciesestsobolev} and \eqref{highfrequencyestsobolev} to conclude
\begin{equation}\label{bmo-1besovB3baestmain}
\|B^{3}_{j}(b,a)\|_{L^{2}_{T}L^{2}_{x}}\leq c(\alpha, N_{0}) 2^{-j\alpha}\|b\|_{L^{\infty}_{T} \dot{H}^{\alpha}}( j\|a\|_{\tilde{L}^{2}_{T}(\dot{B}^{0}_{\infty,\infty})}+T^{\frac{1}{2}}\|a\|_{L^{\infty}_{T}(\dot{B}^{-1}_{\infty,\infty})}).
\end{equation}

Next, by identical reasoning to the previous lemma we obtain
\begin{equation}\label{bmo-1besovB41est}
\|B^{41}_{j}(a,b)\|_{L^{2}_{T}L^{2}_{x}}\leq C_{N_{1}, N_{0},\alpha} 2^{-j\alpha} \|a\|_{\tilde{L}^{2}_{T}(\dot{B}^{0}_{\infty,\infty})}\|b\|_{L^{\infty}_{T} \dot{H}^{\alpha}}.
\end{equation}
Combining the above estimates gives \eqref{besovsoboleveqn}.

Finally, we mention that the proof of \eqref{besovsoboleveqnperturbed} follows from verbatim arguments as those used for \eqref{bmo-1besovB3est1}-\eqref{bmo-1besovB3abestmain}.
\end{proof}
\end{subsection}
The final lemma below estimates $F^{(6)}_{j}$ in Proposition \ref{mainreynoldsest} and thus completes the proof.
\begin{lemma}\label{highfrequencyheat}
Suppose that for some $\delta\in (0,1)$
\begin{equation}\label{u20assumptionhighfreq}
u^{2}_{0}\in \dot{B}^{-1+\delta}_{\infty,\infty}(\mathbb{R}^3)\cap L_{2}.
\end{equation}
Then for finite $T>0$
\begin{equation}\label{highfrequencyheatest}
\|e^{t\Delta}u^{2}_{0}e^{t\Delta} u^{2}_{0}- \dot{S}_{j}(e^{t\Delta}u^{2}_{0}e^{t\Delta} u^{2}_{0})\|_{L^{2}_{T}L^{2}}\leq C(\delta)T^{\frac{\delta}{4}}2^{-j\frac{\delta}{2}}\|u^{2}_{0}\|_{L_{2}}\|u^{2}_{0}\|_{\dot{B}^{-1+\delta}_{\infty,\infty}}.
\end{equation}
\end{lemma}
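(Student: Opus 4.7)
The plan is to reduce the $L^{2}_{T}L^{2}_{x}$ estimate to a gain of $2^{-j\delta/2}$ at fixed time, via Littlewood--Paley orthogonality, and then use an algebra-type estimate combined with heat-semigroup smoothing. Throughout, write $v(\cdot,t):=e^{t\Delta}u_{0}^{2}$, so the quantity to estimate is $v^{2}-\dot{S}_{j}(v^{2})$.

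First, since $\dot{S}_{j}=I-\sum_{k\geq j}\dot{\Delta}_{k}$, at each fixed time $t$ I would write $v^{2}-\dot{S}_{j}v^{2}=\sum_{k\geq j}\dot{\Delta}_{k}(v^{2})$ and use the (almost) orthogonality of the Littlewood--Paley blocks in $L^{2}$ to get
\begin{equation*}
\|v^{2}-\dot{S}_{j}v^{2}\|_{L^{2}}^{2}\;\lesssim\;\sum_{k\geq j}\|\dot{\Delta}_{k}(v^{2})\|_{L^{2}}^{2}\;\leq\;\Bigl(\sum_{k\geq j}2^{-k\delta}\Bigr)\|v^{2}\|_{\dot{B}^{\delta/2}_{2,\infty}}^{2}\;\leq\;C(\delta)\,2^{-j\delta}\|v^{2}\|_{\dot{H}^{\delta/2}}^{2},
\end{equation*}
using $\dot{H}^{\delta/2}=\dot{B}^{\delta/2}_{2,2}\hookrightarrow\dot{B}^{\delta/2}_{2,\infty}$.

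Next, to control $\|v^{2}\|_{\dot{H}^{\delta/2}}$, I would apply Lemma \ref{besovspacesthatform analgebra} with $p=q=2$ and $s=\delta/2<3/2$ (which satisfies the hypothesis $s<3/p$), obtaining
\begin{equation*}
\|v^{2}\|_{\dot{H}^{\delta/2}}\;\leq\;C(\delta)\,\|v\|_{L^{\infty}}\|v\|_{\dot{H}^{\delta/2}}.
\end{equation*}
The two factors are then estimated by the heat-semigroup tools already in the paper. The $L^{\infty}$ factor is handled by the heat-flow characterization \eqref{besovequivalentnorm} of $\dot{B}^{-1+\delta}_{\infty,\infty}$, which gives $\|v(\cdot,t)\|_{L^{\infty}}\leq C\,t^{-(1-\delta)/2}\|u_{0}^{2}\|_{\dot{B}^{-1+\delta}_{\infty,\infty}}$. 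The $\dot{H}^{\delta/2}$ factor is handled by \eqref{sobolevsemigroup}, yielding $\|v(\cdot,t)\|_{\dot{H}^{\delta/2}}\leq C\,t^{-\delta/4}\|u_{0}^{2}\|_{L^{2}}$.

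Finally, combining these and integrating in time,
\begin{equation*}
\|v^{2}-\dot{S}_{j}(v^{2})\|_{L^{2}_{T}L^{2}}^{2}\;\leq\;C(\delta)\,2^{-j\delta}\|u_{0}^{2}\|_{L^{2}}^{2}\|u_{0}^{2}\|_{\dot{B}^{-1+\delta}_{\infty,\infty}}^{2}\int_{0}^{T}t^{-(1-\delta)}\,t^{-\delta/2}\,dt,
\end{equation*}
and the time integral equals $\frac{2}{\delta}T^{\delta/2}$ since $-1+\delta/2>-1$. Taking square roots produces exactly the bound $C(\delta)T^{\delta/4}2^{-j\delta/2}\|u_{0}^{2}\|_{L^{2}}\|u_{0}^{2}\|_{\dot{B}^{-1+\delta}_{\infty,\infty}}$ stated in the lemma. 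There is no genuine obstacle here beyond bookkeeping: the only point requiring care is choosing the Sobolev exponent $\delta/2$ (rather than $\delta$) so that the product of the singular-in-$t$ factors is integrable near $0$ with the right power $T^{\delta/4}$, and verifying that the algebra hypothesis $s<3/p$ holds for the chosen indices.
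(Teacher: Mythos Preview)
Your proof is correct and follows essentially the same route as the paper's own argument: both estimate $\|v^{2}\|_{\dot{H}^{\delta/2}}$ via the algebra estimate (Lemma~\ref{besovspacesthatform analgebra}) combined with the heat-flow bounds \eqref{besovequivalentnorm} and \eqref{sobolevsemigroup}, and then extract the factor $2^{-j\delta/2}$ from the high-frequency tail before integrating in time. The only cosmetic difference is that you use Littlewood--Paley almost-orthogonality in $L^{2}$ to bound the tail, whereas the paper uses the triangle inequality $\sum_{k\geq j}2^{-k\delta/2}\bigl(2^{k\delta/2}\|\dot\Delta_{k}(v^{2})\|_{L^{2}}\bigr)$; both reduce to the same $\dot{H}^{\delta/2}$ control.
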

\begin{proof}
By the heat flow characterization  of Besov spaces
\eqref{besovequivalentnorm}, we have
\begin{equation}\label{heatu20bddest}
\|e^{t\Delta} u_{0}^{2}\|_{L_{\infty}}\leq \frac{C(\delta)\|u^{2}_{0}\|_{\dot{B}^{-1+\delta}_{\infty,\infty}}}{t^{\frac{1}{2}(1-\delta)}}.
\end{equation}
Applying \eqref{sobolevsemigroup} with $\alpha=\frac{\delta}{2}$ gives
\begin{equation}\label{heatu20sobolevest}
\|e^{t\Delta} u_{0}^{2}\|_{\dot{H}^{\frac{\delta}{2}}}\leq \frac{C(\delta)\|u^{2}_{0}\|_{L_{2}}}{t^{\frac{\delta}{4}}}. 
\end{equation}
Using Lemma \ref{besovspacesthatform analgebra} and \eqref{heatu20bddest}-\eqref{heatu20sobolevest}, we see that
\begin{equation}\label{heatu20productsobolevest}
\|e^{t\Delta} u_{0}^{2}e^{t\Delta} u_{0}^{2}\|_{\dot{H}^{\frac{\delta}{2}}}\leq C(\delta)\|e^{t\Delta} u_{0}^{2}\|_{\dot{H}^{\frac{\delta}{2}}}\|e^{t\Delta} u_{0}^{2}\|_{L_{\infty}}\leq  \frac{C(\delta)\|u^{2}_{0}\|_{\dot{B}^{-1+\delta}_{\infty,\infty}}\|u^{2}_{0}\|_{L_{2}}}{t^{\frac{1}{2}-\frac{\delta}{4}}}. 
\end{equation}
Using this, we see that
\begin{align}
\begin{split}
&\|e^{t\Delta} u_{0}^{2}e^{t\Delta} u_{0}^{2}- \dot{S}_{j}(e^{t\Delta} u_{0}^{2}e^{t\Delta} u_{0}^{2})\|_{L^{2}_{x}}\leq \sum_{k\geq j} 2^{-k\frac{\delta}{2}}(2^{k\frac{\delta}{2}}\|\dot{\Delta}_{k}(e^{t\Delta} u_{0}^{2}e^{t\Delta} u_{0}^{2})\|_{L^{2}_{x}})\\&\leq C( \delta)2^{-j\frac{\delta}{2}}\|e^{t\Delta} u_{0}^{2}e^{t\Delta} u_{0}^{2}\|_{\dot{H}^{\frac{\delta}{2}}}\leq \frac{C( \delta)2^{-j\frac{\delta}{2}}\|u^{2}_{0}\|_{\dot{B}^{-1+\delta}_{\infty,\infty}}\|u^{2}_{0}\|_{L_{2}}}{t^{\frac{1}{2}-\frac{\delta}{4}}}. 
\end{split}
\end{align}
Integrating over $(0,T)$ then gives \eqref{highfrequencyheatest}.
\end{proof}
\begin{section}{Proof of Theorem \ref{maintheo}}
\noindent\textbf{Step 1: collecting properties of the strong solution $u(\cdot,u_{0})$}\\
Recall that  there exists $q>3$ and $s\in(-1+\frac{2}{q},0)$ such that
\begin{equation}\label{initialdataassumptionrecall}
u_{0}\in J(\mathbb{R}^3)\cap VMO^{-1}(\mathbb{R}^3)\cap\dot{{B}}^{s}_{q,\infty}.
\end{equation} 
Applying Theorem 1.3 in \cite{barker2018}, Proposition \ref{regularitycriticalbmo-1}, Proposition \ref{zhangzhangregularity} and Remark \ref{VMO-1heat} we conclude that for all $\varepsilon>0$ there exists $\hat{T}(\epsilon, u_{0})>0$ and weak Leray-Hopf solution $u(\cdot,u_{0})$ (unique on $\mathbb{R}^3\times (0,\hat{T})$) with the following properties. Namely,
\begin{equation}\label{ugraduboundedrecall}
\sup_{0<s<\hat{T}}( s^{\frac{1}{2}}\|u(\cdot,s)\|_{L^{\infty}(\mathbb{R}^3)}+ s\|\nabla u(\cdot,s)\|_{L^{\infty}(\mathbb{R}^3)})<\infty,
\end{equation}
\begin{equation}\label{largestcriticalubddrecall}
u\in L^{\infty}(0,\hat{T}; \dot{B}^{-1}_{\infty,\infty})
\end{equation}
and
\begin{equation}\label{usmallnessrecall}
\|u\|_{\tilde{L}^{1}(0,\hat{T}; \dot{B}^{1}_{\infty,\infty})}<\frac{\varepsilon}{2}.
\end{equation}
\noindent{\textbf{Step 2:  splitting the initial data and  properties of the solution to the perturbed equation}}\\
 Using \eqref{initialdataassumptionrecall},  we apply Corollary \ref{Decomp} to show that there exists $p(s,q)>\max(4,q)$, $\hat{\alpha}(s,q)\in (0,\frac{3}{2})$ and $\delta(s,q)\in (0,1-\frac{3}{p})$ such that
 \begin{equation}\label{idsplittingrecall}
u_{0}=u_{0}^{1}+u_{0}^2,\,\,\,
u_{0}^{2} \in \dot{{B}}^{-1+\frac{3}{p}+\delta}_{p,p}(\mathbb{R}^3)\cap J(\mathbb{R}^3)\,\,\,\,\,\textrm{and}\,\,\,\,\,\,u_{0}^{1}\in\dot{H}^{\hat{\alpha}}\cap J(\mathbb{R}^3).
\end{equation}
From  the continuous embedding $L^{2}_{x}\hookrightarrow \dot{B}^{-\frac{3}{2}+\frac{3}{p}}_{p,p}$  and Proposition \ref{interpolativeinequalitybahourichemindanchin}  we see that $u_{0}^{2}\in \dot{B}^{-1+\frac{3}{p}}_{p,p}\hookrightarrow VMO^{-1}\hookrightarrow \dot{B}^{-1}_{\infty,\infty}$.  Hence, $$u_{0}^{1}\in\dot{H}^{\hat{\alpha}}\cap VMO^{-1}(\mathbb{R}^3)\cap J(\mathbb{R}^3). $$

Define $U:=u-e^{t\Delta}u_{0}^{2}$. Using that $u_{0}^{2}\in VMO^{-1}$, the heat-flow characterisation of homogeneous Besov spaces, Remark \ref{VMO-1heat} and the properties of $u$ in step 1, we see that there exists $T(\hat{T}, \varepsilon, u_{0}^{2})\in (0,\hat{T}]$ such that $U$ satisfies the following properties. Namely, \begin{equation}\label{perturbgraduboundedrecall}
\sup_{0<s<{T}} s^{\frac{1}{2}}\|U(\cdot,s)\|_{L^{\infty}(\mathbb{R}^3)}<\infty,
\end{equation}
\begin{equation}\label{perturblargestcriticalubddrecall}
U\in L^{\infty}(0,{T}; \dot{B}^{-1}_{\infty,\infty})
\end{equation}
and
\begin{equation}\label{perturbsmallnessrecall}
\|U\|_{\tilde{L}^{1}(0,{T}; \dot{B}^{1}_{\infty,\infty})}<{\varepsilon}.
\end{equation}

From \eqref{idsplittingrecall}, the continuous embedding and the heat flow characterization of homogeneous Besov spaces, we have
\begin{equation}\label{heatflowu02}
\sup_{t>0} t^{\frac{1}{2}(1-\delta)}\|e^{t\Delta} u_{0}^{2}\|_{L^{\infty}(\mathbb{R}^3)}\leq C\|u_{0}^{2}\|_{\dot{B}^{-1+\frac{3}{p}+\delta}_{p,p}}\,\,\textrm{and}\,\,\,\|e^{t\Delta} u_{0}^{2}\|_{L^{2}(\mathbb{R}^3)}+ 2\int\limits_{0}^{t}\int\limits_{\mathbb{R}^3}|\nabla e^{s\Delta} u_{0}^{2}|^2 dyds=\|u^{2}_{0}\|_{L^{2}(\mathbb{R}^3)}.
\end{equation}
The fact that $u(\cdot, u_{0})$ is a weak Leray-Hopf solution, together with \eqref{idsplittingrecall} and \eqref{heatflowu02}, 

allows us to infer that $U:= u-e^{t\Delta} u_{0}^{2}\in C_{w}([0,T]; J(\mathbb{R}^3))\cap L^{2}(0,T; \dot{H}^{1}(\mathbb{R}^3))$ is a solution of the following system:
\begin{equation}\label{perturbedNSErecall}
\partial_{t}U-\Delta U+U\cdot\nabla  U+ e^{t\Delta} u_{0}^{2}\cdot\nabla U+ U\cdot\nabla e^{t\Delta} u_{0}^{2}+\nabla P= -e^{t\Delta} u_{0}^{2}\cdot\nabla e^{t\Delta} u_{0}^{2}
\end{equation}
\begin{equation}
\textrm{div}\,U=0,\,\,\,\,\,\,U(\cdot,0)=u_{0}^{1}\in\dot{H}^{\hat{\alpha}}\cap VMO^{-1}(\mathbb{R}^3)\cap J(\mathbb{R}^3).
\end{equation}
Using that $u(\cdot,u_{0})$ is a weak Leray-Hopf solution, \eqref{idsplittingrecall} and \eqref{heatflowu02}, we can use known arguments\footnote{See \cite{albrittonbarker}, \cite{barkersersve} or \cite{LR1} for example.} to infer that
 \begin{equation}\label{Uenergyinequality}
 \|U(\cdot,t)\|_{L^{2}(\mathbb{R}^3)}^2+2\int\limits_{0}^{t}\int\limits_{\mathbb{R}^3} |\nabla U(y,s)|^2 dyds\leq \|u^{1}_{0}\|_{L^{2}(\mathbb{R}^3)}^2+ 2\int\limits_{0}^{t}\int\limits_{\mathbb{R}^3} (e^{s\Delta}u_{0}^{2}\otimes U+ e^{s\Delta}u_{0}^{2}\otimes e^{s\Delta} u_{0}^{2}): \nabla U dyds.
 \end{equation}
 The above properties of $U$, together with \eqref{perturbgraduboundedrecall} and \eqref{heatflowu02}, allows us to apply Proposition \ref{persistregularity} (taking $V:= e^{t\Delta}u_{0}^{2}$). Consequently, there exists $\alpha(\hat{\alpha}, \delta)\in (0,\hat{\alpha}]$ such that
 \begin{equation}\label{Usobolevmaintheo}
 U\in L^{\infty}(0,T; H^{\alpha}(\mathbb{R}^3)).
 \end{equation}
 \noindent{\textbf{Step 3:   properties of the high frequency cut-off operator acting on $U$}}\\
Using \eqref{perturblargestcriticalubddrecall}-\eqref{perturbsmallnessrecall} we see that for $j\in\mathbb{N}$

\begin{align}
\begin{split}
&\|\nabla\dot{S}_{j} U\|_{L^{1}_{T}L^{\infty}_{x}}\leq C_{univ}\sum_{j'\leq j-1} 2^{j'}\|\dot{\Delta}_{j'} U\|_{L^{1}_{T}L^{\infty}_{x}}\\&\leq C_{univ}\Big( j\|U\|_{\tilde{L}^{1}(0,T; \dot{B}^{1}_{\infty,\infty})}+\sum_{j'\leq 0} T2^{2j'}(2^{-j'}\|\dot{\Delta}_{j'} U\|_{L^{\infty}_{T}L^{\infty}_{x}}\Big)
\leq C_{univ}j\varepsilon+ C_{univ} T\|U\|_{L^{\infty}(0,T; \dot{B}^{-1}_{\infty,\infty})}.
\end{split} 
\end{align}
By redefining $\varepsilon$, $\hat{T}$ and $T$ appropriately, we have
\begin{equation}\label{Ulowfrequencyestimatemaintheo}
\|\nabla\dot{S}_{j} U\|_{L^{1}_{T}L^{\infty}_{x}}\leq j\varepsilon\log{2}+ C_{U,T}.
\end{equation}
Since $U$ belongs to the global energy class, the high frequency cut-off $\dot{S}_{j}(U)$ also belongs to $C_{w}([0,T]; J(\mathbb{R}^3))\cap L^{2}(0,T; \dot{H}^{1}(\mathbb{R}^3))$. Additionally, $\dot{S}_{j}U$ satisfies the equation in $\mathbb{R}^3\times (0,T)$
\begin{equation}\label{perturbedNSElowfrequenciesrecall}
\partial_{t}\dot{S}_{j}U-\Delta \dot{S}_{j}U+\dot{S}_{j}U\cdot\nabla  \dot{S}_{j}U+ e^{t\Delta} u_{0}^{2}\cdot\nabla \dot{S}_{j}U+  \dot{S}_{j}U\cdot\nabla e^{t\Delta} u_{0}^{2}+\nabla P_{j}=\nabla\cdot F_{j} -e^{t\Delta} u_{0}^{2}\cdot\nabla e^{t\Delta} u_{0}^{2}
\end{equation}
\begin{equation}\label{lowfreqdivfree}
\textrm{div}\,\dot{S}_{j}U=0,\,\,\,\,\,\,\dot{S}_{j}U(\cdot,0)=\dot{S}_{j}u_{0}^{1}.
\end{equation}
Here, $F_{j}$ is defined by \eqref{Fjdef}-\eqref{F6jdef}. The properties \eqref{idsplittingrecall}, \eqref{perturblargestcriticalubddrecall}- \eqref{perturbsmallnessrecall} and \eqref{Usobolevmaintheo} allow  us apply Proposition \ref{mainreynoldsest} and Remark \ref{reynoldsstresscompact}. From this one infers that \begin{equation}\label{reynoldsstresscompactestmaintheo}
\|F_{j}\|_{L^{2}_{T}L^{2}_{x}}\leq C_{U,\delta, u_{0}, p, \alpha,T} 2^{-j\gamma_{\alpha,p,\delta}}.
\end{equation}
Here, $\gamma_{\alpha,p,\delta}:=\min(\frac{\alpha}{2}, \frac{1}{p}, \frac{\delta}{2})>0$.

Finally, it is immediate that since $U$ belongs to the energy class we have $\dot{S}_{j}U \in L^{\infty}(\mathbb{R}^3\times (0,T))\cap L^{\infty}(0,T; L^{2}_{x})$. Together with \eqref{heatflowu02} and  \eqref{reynoldsstresscompactestmaintheo}, we get that 
$$\dot{S}_{j}U\otimes  \dot{S}_{j}U+ e^{t\Delta} u_{0}^{2}\otimes \dot{S}_{j}U+ \dot{S}_{j}U\otimes e^{t\Delta} u_{0}^{2}- F_{j} +e^{t\Delta} u_{0}^{2}\otimes e^{t\Delta} u_{0}^{2}\in L^{2}_{T}L^{2}_{x}. $$
This implies that 
\begin{equation}\label{continuousL2}
\dot{S}_{j}U\in C([0,T]; J(\mathbb{R}^3))
\end{equation}
 and that the following energy equality holds for $t\in [0,T]$:
\begin{align}
\begin{split}\label{lowfreqUenergyinequality}
 &\|\dot{S}_{j}(U)(\cdot,t)\|_{L^{2}(\mathbb{R}^3)}^2+2\int\limits_{0}^{t}\int\limits_{\mathbb{R}^3} |\nabla \dot{S}_{j}(U)(y,s)|^2 dyds= \|\dot{S}_{j}u^{1}_{0}\|_{L^{2}(\mathbb{R}^3)}^2\\& +2\int\limits_{0}^{t}\int\limits_{\mathbb{R}^3} (e^{s\Delta}u_{0}^{2}\otimes \dot{S}_{j}(U)+ e^{s\Delta}u_{0}^{2}\otimes e^{s\Delta} u_{0}^{2}- F_{j}): \nabla \dot{S}_{j}(U) dyds.
\end{split}
\end{align}\\
\noindent{\textbf{Step 4:  Comparing  $\dot{S}_{j}U$ with other weak Leray-Hopf solutions}}\\
Let $v(\cdot, v_{0})$ be \textit{any} weak Leray-Hopf solution to the Navier-Stokes equations with initial data $v_{0}\in J(\mathbb{R}^3)$. 
Define $v^{1}_{0}:= v_{0}- u^{2}_{0}\in J(\mathbb{R}^3)$ and $V:= v(\cdot, v_{0})- e^{t\Delta} u_{0}^{2}$. Utilizing the same reasoning applied to $U$ in step 2, we see that $V\in C_{w}([0,T]; J(\mathbb{R}^3))\cap L^{2}_{T}\dot{H}^{1} $ satisfies the following properties in $\mathbb{R}^3\times (0,T)$. Namely, 
\begin{equation}\label{Vperturbeqn}
\partial_{t}V-\Delta V+V\cdot\nabla  V+ e^{t\Delta} u_{0}^{2}\cdot\nabla V+  V\cdot\nabla e^{t\Delta} u_{0}^{2}+\nabla \Pi= -e^{t\Delta} u_{0}^{2}\cdot\nabla e^{t\Delta} u_{0}^{2},
\end{equation}
\begin{equation}
\textrm{div}\,V=0,\,\,\,\,\,\,V(\cdot,0)=v_{0}^{1}\in J(\mathbb{R}^3)
\end{equation}
and  for $t\in [0,T]$
\begin{equation}\label{Venergyinequality}
 \|V(\cdot,t)\|_{L^{2}(\mathbb{R}^3)}^2+2\int\limits_{0}^{t}\int\limits_{\mathbb{R}^3} |\nabla V(y,s)|^2 dyds\leq \|v^{1}_{0}\|_{L^{2}(\mathbb{R}^3)}^2+ 2\int\limits_{0}^{t}\int\limits_{\mathbb{R}^3} (e^{s\Delta}u_{0}^{2}\otimes V+ e^{s\Delta}u_{0}^{2}\otimes e^{s\Delta} u_{0}^{2}): \nabla V dyds.
 \end{equation}
Now notice that $v(\cdot, v_{0})- u(\cdot, u_{0})\equiv V(\cdot, v_{0}^{1})-U(\cdot, u_{0}^{1})$ and
$v_{0}-u_{0}\equiv v^{1}_{0}-u^{1}_{0}$. Therefore, to prove Theorem \ref{maintheo} it is sufficient to show that for all $\eta\in (0,1)$ there exists positive $T(\eta,u_{0}, U,s,q)$ and $C(T,U,u_{0},s,q)$  such that for all $t\in [0,T]$ we have
\begin{equation}\label{perturbL2stab}
\frac{1}{2}\|V(t)-U(t)\|_{L_{2}}^2+\int\limits_{0}^{t} \|\nabla(V-U)(t')\|_{L_{2}}^2 dt'\leq C(T,U, u_{0},s,q)\|v_{0}^{1}-u_{0}^{1}\|_{L^{2}}^{2-2\eta}.
\end{equation}
Following Chemin's idea in \cite{chemin}, we now compare $V$ with the high frequency cut-off of $U$ $\dot{S}_{j}(U)$.
Specifically, define $W_{j}:=V-\dot{S}_{j}U$. Then $W_{j}\in C_{w}([0,T]; J(\mathbb{R}^3)\cap L^{2}_{T}\dot{H}^{1}$ is a weak solution to the following equation. Namely,
\begin{equation}\label{Wjperturbeqn}
\partial_{t}W_{j}-\Delta W_{j}+W_{j}\cdot\nabla  W_{j}+ e^{t\Delta} u_{0}^{2}\cdot\nabla W_{j}+  W_{j}\cdot\nabla e^{t\Delta} u_{0}^{2}+W_{j}\cdot\nabla \dot{S}_{j}U+\dot{S}_{j}U\cdot\nabla W_{j}+\nabla \Pi_{j}= -\nabla\cdot F_{j}
\end{equation}
\begin{equation}
\textrm{div}\,W_{j}=0,\,\,\,\,\,\,W_{j}(\cdot,0)=v_{0}^{1}-\dot{S}_{j} u_{0}^{1}\in J(\mathbb{R}^3).
\end{equation}
Since $U\in L^{\infty}_{T}L^{2}_{x}$ we have that for every $k\in\mathbb{N}$ that
\begin{equation}\label{lowfreqbddSJU}
\nabla^{k} \dot{S}_{j}(U)\in L^{\infty}(\mathbb{R}^3\times (0,T)).
\end{equation}
Using $U:= u-e^{t\Delta}u^{2}_{0}$, $u_{0}^{2}\in J(\mathbb{R}^3)$ and Proposition \ref{regularitycriticalbmo-1}, we see that 
for $\lambda\in(0,T)$ and $k=0,1\ldots$, $l=0,1\ldots$:
\begin{equation}\label{SjUsmoothbmo-1}
\partial_{t}^{l}\nabla^{k} \dot{S}_{j}U\in L^{\infty}(\mathbb{R}^3\times (\lambda, T)).
\end{equation} 
From \eqref{continuousL2}, \eqref{lowfreqbddSJU}-\eqref{SjUsmoothbmo-1} and the fact that $V$ and $\dot{S}_{j}U$ satisfy global energy inequalities, standard arguments (see \cite{LR1}, \cite{barkersersve} or \cite{albrittonbarker} for example) imply the following. Namely that for $t\in [0,T]$,  $W_{j}$ satisfies the global energy inequality
\begin{align}\label{wjenergyinequality}
\begin{split}
&\frac{1}{2}\|W_{j}(\cdot,t)\|_{L^{2}(\mathbb{R}^3)}^2+\int\limits_{0}^{t}\int\limits_{\mathbb{R}^3} |\nabla W_{j}|^2 dyds\\&\leq\frac{1}{2}\|v_{0}^{1}-\dot{S}_{j} u_{0}^{1}\|_{L^{2}(\mathbb{R}^3)}^2+\int\limits_{0}^{t}\int\limits_{\mathbb{R}^3} (F_{j}+(e^{t\Delta} u_{0}^{2}\otimes W_{j})): \nabla W_{j} dyds-\int\limits_{0}^{t}\int\limits_{\mathbb{R}^3} (W_{j}\cdot \nabla \dot{S}_{j}U)\cdot W_{j}. dyds
\end{split}
\end{align}
\noindent{\textbf{Step 5:  Conclusion}}\\
Applying the H\"{o}lder and Young inequality to \eqref{wjenergyinequality} yields
\begin{align}
\begin{split}\label{wjenergyest1}
&\|W_{j}(\cdot,t)\|_{L^{2}(\mathbb{R}^3)}^2+\int\limits_{0}^{t}\int\limits_{\mathbb{R}^3} |\nabla W_{j}|^2 dyds\leq C_{univ}\|v_{0}^{1}-\dot{S}_{j} u_{0}^{1}\|_{L^{2}(\mathbb{R}^3)}^2+C_{univ}\int\limits_{0}^{t}\int\limits_{\mathbb{R}^3} |F_{j}(y,s)|^2 dyds\\&+ C_{univ} \int\limits_{0}^{t} \|W_{j}(\cdot,s)\|_{L^{2}(\mathbb{R}^3)}^2\Big(\|e^{t\Delta}u_{0}^{2}\|_{L^{\infty}(\mathbb{R}^3)}^2+ \|\nabla \dot{S}_{j} U(\cdot,s)\|_{L^{\infty}(\mathbb{R}^3)}\Big)ds.
\end{split}
\end{align}
Using \eqref{heatflowu02} and \eqref{reynoldsstresscompactestmaintheo}, we have that for $t\in (0,T)$
\begin{align}
\begin{split}\label{wjenergyest2}
&\|W_{j}(\cdot,t)\|_{L^{2}(\mathbb{R}^3)}^2+\int\limits_{0}^{t}\int\limits_{\mathbb{R}^3} |\nabla W_{j}|^2 dyds\leq C_{univ}\|v_{0}^{1}-\dot{S}_{j} u_{0}^{1}\|_{L^{2}(\mathbb{R}^3)}^2+ C_{U, \delta,u_{0}, p, \alpha,T} 2^{-2j\gamma_{\alpha,p,\delta}}\\&+ C_{univ} \int\limits_{0}^{t} \|W_{j}(\cdot,s)\|_{L^{2}(\mathbb{R}^3)}^2\Big(\frac{\|u_{0}^{2}\|_{\dot{B}^{-1+\delta}_{\infty,\infty}}^{2}}{s^{1-\delta}}+  \|\nabla \dot{S}_{j} U(\cdot,s)\|_{L^{\infty}(\mathbb{R}^3)}\Big)ds.
\end{split}
\end{align}
Applying Gronwall's lemma gives that for $t\in [0,T]$
\begin{align}
\begin{split}\label{wjgronwall}
&\|W_{j}(\cdot,t)\|_{L^{2}(\mathbb{R}^3)}^2+\int\limits_{0}^{t}\int\limits_{\mathbb{R}^3} |\nabla W_{j}|^2 dyds\leq (C_{univ}\|v_{0}^{1}-\dot{S}_{j} u_{0}^{1}\|_{L^{2}(\mathbb{R}^3)}^2\\&+ C_{U,\delta, u_{0}, p, \alpha,T} 2^{-2j\gamma_{\alpha,p,\delta}})\exp\Big(C_{univ} '\int\limits_{0}^{t} \frac{\|u_{0}^{2}\|_{\dot{B}^{-1+\delta}_{\infty,\infty}}^{2}}{s^{1-\delta}}+  \|\nabla \dot{S}_{j} U(\cdot,s)\|_{L^{\infty}(\mathbb{R}^3)}ds\Big) \\&\leq C_{U, u_{0}, p,\delta, \alpha,T}(\|v_{0}^{1}-\dot{S}_{j} u_{0}^{1}\|_{L^{2}(\mathbb{R}^3)}^2+  2^{-2j\gamma_{\alpha,p,\delta}})\exp\Big(C_{univ} ' \|\nabla \dot{S}_{j} U\|_{L^{1}_{T}L^{\infty}}\Big).
\end{split}
\end{align}
Putting $\hat{\varepsilon}:= C_{univ}'\varepsilon$ and using \eqref{Ulowfrequencyestimatemaintheo} gives
\begin{equation}\label{wjgronwallcompact}
\|W_{j}(\cdot,t)\|_{L^{2}(\mathbb{R}^3)}^2+\int\limits_{0}^{t}\int\limits_{\mathbb{R}^3} |\nabla W_{j}|^2 dyds\leq C_{U, u_{0}, p,\delta, \alpha,T}^{'}(\|v_{0}^{1}-\dot{S}_{j} u_{0}^{1}\|_{L^{2}(\mathbb{R}^3)}^2+  2^{-2j\gamma_{\alpha,p,\delta}})2^{j\hat{\varepsilon}}
\end{equation}
Notice that the same reasoning as we used to get \eqref{wjgronwallcompact} applies to the high frequencies $U-\dot{S}_{j}(U)$ of $U$. In that case one has
\begin{equation}\label{highfrequenciesUest}
\|U-\dot{S}_{j}(U)(\cdot,t)\|_{L^{2}(\mathbb{R}^3)}^2+\int\limits_{0}^{t}\int\limits_{\mathbb{R}^3} |\nabla U-\dot{S}_{j}(U)|^2 dyds\leq C_{U, u_{0}, p,\delta, \alpha,T}^{'}(\|u_{0}^{1}-\dot{S}_{j} u_{0}^{1}\|_{L^{2}(\mathbb{R}^3)}^2+  2^{-2j\gamma_{\alpha,p,\delta}})2^{j\hat{\varepsilon}}
\end{equation}
Noting that $V-U\equiv W_{j}-(U-\dot{S}_{j}(U))$ and $v^{1}_{0}- u^{1}_{0}\equiv v^{1}_{0}-\dot{S}_{j}u_{0}^{1}+(\dot{S}_{j}u_{0}^{1}-u_{0}^{1}) $, we can combine  \eqref{wjgronwallcompact}-\eqref{highfrequenciesUest} to get
\begin{align}
\begin{split}\label{VminusUest1}
&\|(V-U)(\cdot,t)\|_{L^{2}(\mathbb{R}^3)}^2+\int\limits_{0}^{t}\int\limits_{\mathbb{R}^3} |\nabla (V-U)|^2 dyds\\ &\leq C_{U, u_{0}, p,\delta, \alpha,T}^{''}(\|u_{0}^{1}-\dot{S}_{j} u_{0}^{1}\|_{L^{2}(\mathbb{R}^3)}^2+\|u_{0}^{1}-v_{0}^{1}\|_{L^{2}_{x}}^{2}+  2^{-2j\gamma_{\alpha,p,\delta}})2^{j\hat{\varepsilon}}.
\end{split}
\end{align}
Recall from \eqref{idsplittingrecall} that $u_{0}^{1}\in \dot{H}^{\hat{\alpha}}$ with $\hat{\alpha}\in (0,\frac{3}{2})$. Thus,
$$\|u_{0}^{1}-\dot{S}_{j} u_{0}^{1}\|_{L^{2}(\mathbb{R}^3)}\leq \sum_{j'\geq j}(2^{j'\hat{\alpha}}\|\dot{\Delta}_{j'}u_{0}^{1}\|_{L^{2}_{x}}) 2^{-j'\hat{\alpha}}\leq \|u_{0}^{1}\|_{\dot{H}^{\hat{\alpha}}} \sum_{j'\geq j} 2^{-j'\hat{\alpha}}\leq C(\hat{\alpha})\|u_{0}^{1}\|_{\dot{H}^{\hat{\alpha}}} 2^{-j\hat{\alpha}}.$$
\end{section}
Combining this with \eqref{VminusUest1} gives that for all $j\in\mathbb{N}$ we have
\begin{equation}\label{VminusUmainest}
\|(V-U)(\cdot,t)\|_{L^{2}(\mathbb{R}^3)}^2+\int\limits_{0}^{t}\int\limits_{\mathbb{R}^3} |\nabla (V-U)|^2 dyds\leq C_{U, u_{0}, p,\delta, \alpha,T}^{'''}(\|u_{0}^{1}-v_{0}^{1}\|_{L^{2}(\mathbb{R}^3)}^{2}+  2^{-2j\min{(\gamma_{\alpha,p,\delta}, \hat{\alpha}})})2^{j\hat{\varepsilon}}.
\end{equation}
 For any fixed $\eta\in (0,1)$, we take $\hat{\varepsilon}$ to satisfy
\begin{equation}\label{epsilonsmallness}
\hat{\varepsilon}=2\eta\min{(\gamma_{\alpha,p,\delta}, \hat{\alpha}})).
\end{equation}
Now, we treat two cases
\begin{enumerate}
\item $u_{0}^{1}=v_{0}^{1}$
\item $0<\|u_{0}^{1}-v_{0}^{1}\|_{L^{2}_{x}}<1$.
\end{enumerate}
In the first case, we have
\begin{equation}\label{VminusUest1zeroinitialdata}
\|(V-U)(\cdot,t)\|_{L^{2}(\mathbb{R}^3)}^2+\int\limits_{0}^{t}\int\limits_{\mathbb{R}^3} |\nabla (V-U)|^2 dyds\leq C_{U, u_{0}, p,\delta, \alpha,T}^{'''}  2^{-2j\min{(\gamma_{\alpha,p,\delta}, \hat{\alpha}})+j\hat{\varepsilon}}.
\end{equation}
With the smallness assumption \eqref{epsilonsmallness}, we see that taking $j\uparrow \infty$ results in $V\equiv U$ in $\mathbb{R}^3\times (0,T)$. This recovers the author's weak-strong uniqueness result in \cite{barker2018}.

In the second case, take
\begin{equation}\label{highfrequencychoice}
j:= \ceil*{\frac{-\log_{2}(\|u_{0}^{1}-v_{0}^{1}\|_{L^{2}_{x}})}{\min{(\gamma_{\alpha,p,\delta}, \hat{\alpha}})}}>0.
\end{equation}
We then get that 
\begin{equation}\label{consequence1highfreq}
2^{-2j\min{(\gamma_{\alpha,p,\delta}, \hat{\alpha}}})\leq \|u^{1}_{0}-v^{1}_{0}\|_{L^{2}_{x}}^2
\end{equation}
and 
\begin{equation}\label{consequence2highfreq}
2^{j\hat{\varepsilon}}\leq \frac{2^{\hat{\varepsilon}}}{\|u^{1}_{0}-v^{1}_{0}\|_{L^{2}_{x}}^{2\eta}}.
\end{equation}
Substituting \eqref{consequence1highfreq}-\eqref{consequence2highfreq} into \eqref{VminusUmainest} gives
\begin{equation}\label{VminusUfinalest}
\|(V-U)(\cdot,t)\|_{L^{2}(\mathbb{R}^3)}^2+\int\limits_{0}^{t}\int\limits_{\mathbb{R}^3} |\nabla (V-U)|^2 dyds\leq C_{U, u_{0}, p,\delta, \alpha,T,\hat{\varepsilon}}\|u_{0}^{1}-v_{0}^{1}\|_{L^{2}_{x}}^{2(1-\eta)}
\end{equation}
Putting cases 1 and 2 together gives \eqref{perturbL2stab}. As explained in Step 4, this  implies the conclusion of Theorem \ref{maintheo}.
\
\end{section}
\subsection*{Acknowledgement}
I would like to thank Isabelle Gallagher for encouraging me to write this paper.
\bibliography{refs}        
\bibliographystyle{plain}  

\end{document}